\documentclass[final]{siamltex}
\newtheorem{example}[theorem]{Example}

\newtheorem{remark}[theorem]{Remark}

\newcommand{\R}{\rm I\kern-.19emR}
\newcommand{\bQ}{\rm I\kern-.19emQ}
\newcommand{\C}{\rm I\kern-.17emC}
\newcommand{\bR}  {\R}
\newcommand{\bRn}  {\R^{n,n}}
\newcommand{\bRmm} {\R^{m,m}}
\newcommand{\bRnmnm} {\R^{n-m,n-m}}

\newcommand{\aij} {a_{i,j}}
\newcommand{\aji} {a_{j,i}}

\newcommand{\beqo} {\begin {eqnarray*}}
\newcommand{\beq} {\begin {eqnarray}}
\newcommand{\eeqo} {\end {eqnarray*}}
\newcommand{\eeq} {\end {eqnarray}}

\newcommand{\bdefi} {\begin {definition}}
\newcommand{\edefi} {\end {definition}}
\newcommand{\bpro} {\begin {proposition}}
\newcommand{\epro} {\end {proposition}}
\newcommand{\btheo} {\begin {theorem}}
\newcommand{\etheo} {\end {theorem}}
\newcommand{\blem} {\begin {lemma}}
\newcommand{\elem} {\end {lemma}}
\newcommand{\bcor} {\begin {corollary}}
\newcommand{\ecor} {\end {corollary}}
\newcommand{\zmas} {$Z$--matrices}
\newcommand{\mmas} {$M$--matrices}

\newcommand{\tom} {\delta (A)}

\newcommand{\gum} {generalized ultrametric matrix}
\newcommand{\stum} {strictly  ultrametric matrix}
\newcommand{\gums} {generalized ultrametric matrices}

\newcommand{\fmas} {$F_0$--matrices}
\newcommand{\mma} {$M$--matrix}

\newcommand{\fma} {$F_0$--matrix}

\newcommand{\nhalf}{\lfloor\frac{n}{2}\rfloor}
\newcommand{\sgum} {shifted generalized ultrametric matrix}
\newcommand{\sgums} {shifted generalized ultrametric matrices}
\title {Inverse  acyclic Z-matrices, matrices of tree structure and shifted generalized ultrametric matrices }

\author{Reinhard Nabben\thanks{
     	TU Berlin 
        Germany, email: nabben@math.tu-berlin.de, date: 27.06.2026}}

\date{27.06.2026}

\begin{document}

\maketitle

\begin{abstract}
Inverses of acyclic or treediagonal matrices are nicely characterized in the landmarked paper by Klein \cite{KLe82}.  But also in \cite{Nab01} a   characterization of these inverses is given which is related to  the structure of inverses of tridiagonal matrices. Here  we use this characterization to determine and construct matrices whose inverses are acyclic $Z$-matrices. Moreover, we establish relations between several classes of matrices, namely the classes of $Z$-matrices, inverse $Z$-matrices, ultrametric matrices, generalized ultrametric matrices, shifted ultrametric matrices, acyclic matrices, inverse acyclic matrices, matrices of tree structure, cyclopes and generalized cyclopes. 
\end{abstract}

\begin{keywords} acyclic matrices, trees, tridiagonal matrices, $Z$-matrices, $M$-matrices, inverse $Z$-matrices, matrices of tree structure, ultrametric matrices
\end{keywords}

\begin{AMS}
15A48, 15A57, 65F10
\end{AMS}

\pagestyle{myheadings}
\thispagestyle{plain}

\markboth{R. NABBEN}{inverse acyclic Z-matrices}

\section{Introduction}

An 
$n \times n$ matrix is called {\it data sparse} if it can be represented by much less than  $n^2$ parameters with respect to a certain format or structure.
The most prominent class of  data sparse matrices is the class of inverses of non-singular tridiagonal matrices, which can be described by $3n-2$ parameters or $2n-1$ parameter in the symmetric case \cite{GK1,GK2}. But  also inverses of nonsingular irreducible acyclic matrices are data  sparse and  can be described also by  $3n-2$ parameters \cite{Nab01}. 
An irreducible  matrix is called acyclic or treediagonal if its undirected graph is a tree. Note  that the graph  of a tridiagonal matrix is just a line. Hence the class of acyclic matrices includes  the class  of tridiagonal matrices. 

Acyclic (or treediagonal) matrices are studied in the landmarked paper by Klein \cite{KLe82}. He showed that the inverse of tridiagonal matrices satisfies  the so-called triangle property. In \cite{Nab01} the author  gives   another equivalent formulation  for  the inverses of these matrices. 
The beauty of this result is that it nicely describes the structure in a way it is done  for  inverses of tridiagonal matrices. It is proven  
that the inverses of irreducible acyclic symmetric matrices are given 
as the Hadamard product of three matrices, a type D matrix, a flipped type
D matrix  and a matrix of tree structure which is closely related 
to the graph of $A$ itself. A similar result holds for non-symmetric matrices.

Acylic matrices are  studied by many authors, however often eigenvalue problems or   combinatorial properties of these matrices are in the main focus, see e.g. \cite{ Bru86, DemG93, Fied75, Kim09}.  But in \cite{Bri04}  Moore--Penrose inverses of acyclic matrices are considered. 

On the other hand  there is an on going interest in $Z$-matrices and matrices whose inverse is a $Z$-matrix (inverse $Z$-matrices). Fiedler and Markam introduced in \cite{FieM92} a classification of $Z$-matrices. The best known class  of $Z$-matrices is the class of $M$-matrices. 
There exist many equivalent characterizations of $M$-matrices \cite{BerP94}. Inverse $M$-matrices are also studied  by many authors, see
\cite{Joh82, JohS11} for an overview. But also for other classes of $Z$-matrices, namely the  classes of $N_0$- and $F_0$-matrices, characterizations are known \cite{JohG82, Joh85, Che90}. 
In \cite{Nab97} the author established properties and characterization 
for all classes of $Z$-matrices and inverse $Z$-matrices. 
Inverse $Z$-matrices occur e.g. in the theory of linear comnplementarity problems, where such a problem can be reduced to a single linear  problem, if the relevant matrix is an inverse $Z$-matrix. Moreover, semimonton matrices are  inverse $Z$-matrices \cite{TsaW19}.

In \cite{McDNST95} and \cite{NabV95a} the  class of generalized ultrametric matrices is  introduced, which includes the class of symmetric ultrametric matrices \cite{NabV94}. It is then proved in \cite{McDNST95} and \cite{NabV95a}  that the inverses of these matrices  are $M$-matrices. Motivated by this result in \cite{NabV95b} 
shifted generalized ultrametric matrices are  defined. It is then established that  with certain shifts one can obtain inverse $Z$-matrices of all classes of $Z$-matrices.
Moreover, in \cite{Nab97} the so-called type D-matrices (introduced by Markham in \cite{Mar72} as inverse $M$-matrices) are modified such that one can obtain inverse $Z$-matrices easily.  Even more, the inverses of these type D matrices are tridiagonal. 
In \cite{McDNNST98} so-called cyclopes are introduced which can be inverse tridiagonal $Z$-matrices of all kind. 
In \cite{MarMZ03} generalized cyclopes are defined which are then inverses of acyclic $M$-matrices.   

Inverses  of tridiagonal matrices and inverses of acyclic matrices share many properties. It is well-known  that  the absolute values of the entries of the inverse of diagonally dominant tridiagonal matrix decay away from the diagonal see e.g. \cite{Nab99b, Nab99a}. Inverses of  diagonally dominant acyclic matrices do not have  such a  decay. But there is still a decay, however, this decay is somehow hidden \cite{Nab26}.

Recently in \cite{PraS26} conditions for matrices  are  given such the inverses are treediagonal (acyclic) $M$-, $N_0-$ and $F_0$-matrices. However, these conditions do not lead directly to a construction of the related  matrices. Motivated by the results of \cite{PraS26} we give simple
ways to build  inverse acyclic $Z$-matrices of all  kind. 
We characterize not only acylic inverse $M$- and $N_0$-matrices but also give a simple construction of all classes of inverse acyclic $Z$-matrices  for an arbitrary given tree. 
Our results then generalize  some of the results of \cite{PraS26}. Even more, our results include general type-D matrices  and  so-called  cyclopes \cite{McDNNST98}, which lead to inverse tridiagonal $Z$-matrices.  
Based on results of \cite{Nab01} for inverse acyclic matrices,  in \cite{MarMZ03} inverse acyclic $M$-matrices are characterized by introducing generalized cyclopes. 
Here we generalize the main result of  \cite{MarMZ03}
for inverse $M$-matrices to all classes of inverse $Z$-matrices. 
Even more, our proofs are much shorter than the proof given in \cite{MarMZ03} for the $M$-matrix case.
Thus, the study of inverse acyclic matrices lead to many relations between  several classes of matrices, namely 
$Z$-matrices, inverse $Z$-matrices, ultrametric matrices, generalized ultrametric matrices, shifted ultrametric matrices, acyclic matrices, inverse acyclic matrices, matrices of tree structure, cyclopes and generalized cyclopes.

\section{Preliminary results -  some classes of matrices}

\subsection{Notation}
A real squared matrix $A$ is called nonnegative, denoted by $A \geq 0$, if all its entries are nonnegative. Similarly a positive (negative)  matrix $A$ is defined and denoted by $A > 0$ ($A < 0$). Moreover, let  $\rho(C)$ be the spectral radius of a square matrix $C$. The vector of all ones in $\bR^n$ is denoted by $\xi_n$, thus
\[  \xi_n := [1, \ldots, 1]^T \in \bR^n.\]
A matrix of all ones is denoted by $E$.  The set of positive integers
$\{1,\ldots,n\}$
 is denoted by $N$. For an integer $t$ we define the vectors 
\[ \tilde e_t := [0, \ldots, 0, 1]^T   \in \bR^t  \quad \mbox{and} \quad   \hat  e_t = [1, 0, \ldots, 0]^T \in \bR^t   . \]

For a block-partitioned  matrix $A \in \bRn$ with 
\beqo
A = \left[ \begin{array}{cc}
A_{1,1} & A_{1,2} \\
A_{2,1} & A_{2,2} 
\end{array} \right],
\eeqo
where $A_{1,1} \in \bR ^{r,r}$ is nonsingular and where $A_{2,2} \in \bR^{n-r,n-r}$ with $1 \leq r < n$, then
\beq \label{defschur}
A/A_{1,1} := A_{2,2} - A_{2,1}A^{-1}_{1,1}A_{1,2}
\eeq
denotes the {\it Schur complement} of $A$ with respect to $A_{1,1}$. Similarly, 
$A/A_{2,2}$ is defined. Also, $|M|$ denotes the cardinality of a set $M$, i.e., 
the number of elements in $M$.

 \subsection{Z-matrices and inverse Z-matrices}
A  real matrix whose off-diagonal entries are nonpositive is called $Z$-matrix. Every $Z$ matrix can be written as 
\beqo
A = tI - B, \ \mbox{for} \ t \in \bR \ \mbox{and} \ B \geq 0.
\eeqo

In 1992 Fiedler and Markham \cite{FieM92} introduced the following classification of \zmas:

\begin{definition} \label{defls}
Let $L_s$ (for $s = 0,\ldots,n$) denotes the class of matrices consisting of real $n \times n$ matrices which have the form
\beq \label{defzeq}
& A = tI - B \quad \mbox { where } &  \\
& B \geq 0 \quad \mbox{ and } \quad \rho_s(B) \leq t < \rho_{s+1}(B).
& \nonumber 
\eeq
Here, 
\beqo
\rho_s(B) := max \{ \rho (\tilde B) :  \tilde B \  \mbox{ is a } \ s \times s 
\ \mbox{principal submatrices of} \  B \},
\eeqo
 and we set $ \rho_{0}(B) := -\infty$ and $\rho_{n+1}(B) := \infty.$
\end{definition}


The class $L_n$ is just the class of $n \times n $ 
(singular and nonsingular) 
M--matrices. The class  $L_{n-1}$ is the class of $n \times n $ $N_0$--matrices introduced by G. 
Johnson \cite{JohG82}, and this class contains the N--matrices defined by K. Fan \cite{Fan66}.
Moreover, the class of $n \times n$  \fmas\ introduced by Johnson \cite{JohG82} is just $L_{n-2}$. 

Here we should mention that the classification of \zmas\ given above inherits the dimension of the matrices one consider. If we deal with  $n \times n$ matrices we have $n + 1$ classes of \zmas\ each consisting of matrices of the same dimension. In contrast the expression {\it class of $M$-matrices } includes always  matrices of different dimensions. 

It is well-know that $DA$ and $AD$ are again \mmas, if $A$ is an \mma\ and if
$D$ is a positive diagonal matrix. Hence, the classes $L_s$ are invariant under multiplying with a positive diagonal matrix \cite{FieM92}. 
Moreover, the classes $L_s$ are invariant under permutational similarity \cite{FieM92}. 

A matrix $A$ is called an inverse $Z$-matrix, if it is nonsingular and its inverse is a $Z$-matrix. More preciously, $A$ is called an inverse $L_s$-matrix, if it is nonsingular
and $A^{-1} \in L_s$. 

In \cite{Nab97} many properties of all  classes of $Z$-matrices and inverse $Z$-matrices are given. It turned our that it is important whether 
$\nhalf \leq s < n$ or not. One example is the determinant of a $Z$-matrix $A \in L_s$.

It is well-known, that $det A \geq 0$, if $A \in L_n$, i.e. $A$ is an
\mma. G.A. Johnson \cite{JohG82} proved that $det A \leq 0$ if $ A \in N_0$.
Moreover, the determinant of a \fma\ is nonpositive also \cite{JohG82}. However,
G.A. Johnson gave an example that the sign of the determinant of matrices  
in $L_{n-3}$ need not to be the same, \cite{JohG82}. But we have (see \cite{Nab97}):

\begin{theorem} \label{detnew}
Let $A \in L_s$.  Then\\
i) $det A \geq 0$, if $ s = n$, \\
ii) $det A \leq 0$, if $\nhalf \leq s < n$.
\end{theorem}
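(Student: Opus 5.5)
The approach is through the characteristic polynomial
\[ p(\lambda) := \det(\lambda I - B), \qquad \det A = p(t). \]
Nonreal eigenvalues of $B$ come in conjugate pairs, and each pair contributes the factor $|t-\lambda|^2 \geq 0$. Hence $\det A$ is nonnegative or nonpositive according as the number of real eigenvalues of $B$ (with multiplicity) that exceed $t$ is even or odd, and $\det A = 0$ if some eigenvalue equals $t$.

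For part i) this settles everything. If $s=n$ then $t \geq \rho_n(B) = \rho(B)$, so no real eigenvalue of $B$ exceeds $t$, and $\det A = p(t) \geq 0$.

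For part ii) I would use the derivative identity
\[ p^{(k)}(\lambda) = k! \sum_{|\alpha| = n-k} \det(\lambda I - B[\alpha]), \]
where the sum runs over the principal submatrices $B[\alpha]$ of order $n-k$. Put $m := n-s \geq 1$. For $\lambda \geq t \geq \rho_s(B)$, every principal submatrix $\lambda I - B[\alpha]$ of order at most $s$ is an $M$-matrix, since $\rho(B[\alpha]) \leq \rho_s(B)$ by monotonicity of the spectral radius under passing to principal submatrices of nonnegative matrices. By i) each of these determinants is nonnegative, so $p^{(k)} \geq 0$ on $[t,\infty)$ for all $k \geq m$.

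The case $m=1$, i.e.\ $s=n-1$, is then immediate. Here $p' \geq 0$ on $[t,\infty)$, and $p(\rho(B)) = 0$ with $\rho(B) \geq \rho_{s+1}(B) > t$. Since $p$ is nondecreasing on $[t,\rho(B)]$, we get $p(t) \leq 0$.

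For general $s$ with $\nhalf \leq s < n$, I would argue by induction on $n$ using the expansion
\[ p'(\lambda) = \sum_{i=1}^n \det(\lambda I - B_i), \]
where $B_i$ is $B$ with row and column $i$ deleted. Each $B_i$ satisfies $t \geq \rho_s(B_i)$, and $s \geq \lfloor (n-1)/2 \rfloor$. So $tI-B_i$ lies in some class $L_{s'}$ of order $n-1$ with $s' \geq s$, and it falls into one of two regimes.
\begin{itemize}
\item If $t \geq \rho(B_i)$, then $tI - B_i$ is an $M$-matrix and its determinant is nonnegative.
\item Otherwise $\lfloor (n-1)/2\rfloor \leq s' < n-1$, and induction gives a nonpositive determinant.
\end{itemize}
By itself this only controls signs at the single point $t$. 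What I would really try to show is the following: on $[t,\rho(B))$ the function $p$ has no sign change other than the one forced at $\rho(B)$. Equivalently, $B$ has an odd number of real eigenvalues in $(t,\rho(B)]$.

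The tool for this is Rolle's theorem applied to $p$ on $[t,\infty)$. Since $p^{(m)} \geq 0$ there and is not identically zero ($p$ has degree $n > m-1$), $p^{(m-1)}$ is strictly increasing on $[t,\infty)$. By repeated use of Rolle's theorem, $p$ therefore has at most $m$ roots in $(t,\infty)$, counted with multiplicity.

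The main obstacle is converting this upper bound of $m \leq \lceil n/2 \rceil$ roots into a parity statement. This is exactly where the hypothesis $s \geq \nhalf$ must enter. The first idea to try is this: if there were an even number, at least $2$, of roots in $(t,\infty)$, pick a principal submatrix of order $s+1$ with spectral radius above $t$. Then, using $2s \geq n-1$, show that two such "large" eigenvalues force two disjoint principal submatrices, each of order at most $s$, with spectral radius above $t$. That would contradict $t \geq \rho_s(B)$. The backup plan, if this does not go through, is a Schur-complement reduction: split $A$ into a leading $M$-matrix block of order $s$ and the complementary block of order $n-s \leq s$, and track the sign of the determinant through the Schur complement $A/A_{1,1}$.
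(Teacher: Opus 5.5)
Your proof of i) is correct, and so is ii) in the case $s=n-1$. The parity observation, the identity $p^{(k)}(\lambda)=k!\sum_{|\alpha|=n-k}\det(\lambda I-B[\alpha])$, and the monotonicity of $p$ on $[t,\rho(B)]$ are all sound. The paper gives no proof of this theorem; it only quotes it from \cite{Nab97}, so your argument has to stand on its own.

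For $\lfloor n/2\rfloor\le s<n-1$ there is a genuine gap, and it sits exactly at the decisive step. The Rolle count only shows that $p$ has at most $m=n-s$ roots in $(t,\infty)$. For $m\ge 2$ that does not rule out an even number of roots. The expansion of $p'(t)$ mixes terms of both signs, as you note yourself. The ``first idea'' is stated without any mechanism that produces the disjoint principal submatrices, and the Schur-complement backup is not developed. As phrased, the first idea also aims at the wrong target. You cannot expect both submatrices to have order at most $s$. Moreover, ``two large eigenvalues'' may be a single eigenvalue of algebraic multiplicity two, e.g.\ $\rho(B)$ for reducible $B$, and then no second, sign-changing eigenvector exists. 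So ii) is unproved for $s<n-1$.

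Your idea can be completed as follows, and then the Rolle count and the derivative identity become unnecessary.

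\emph{Irreducible case.} Let $B$ be irreducible and suppose $\lambda\neq\rho(B)$ is a real eigenvalue with $\lambda>t$ and real eigenvector $v$.
By Perron--Frobenius, $v$ is neither nonnegative nor nonpositive. Hence $P=\{i: v_i>0\}$ and $N=\{i: v_i<0\}$ are nonempty and disjoint.
Since $B\ge 0$, for $i\in P$ we get $\lambda v_i=\sum_j b_{ij}v_j\le\sum_{j\in P}b_{ij}v_j$. Thus $B[P]v[P]\ge\lambda v[P]$ with $v[P]>0$, so $\rho(B[P])\ge\lambda$ by Collatz--Wielandt. Likewise $\rho(B[N])\ge\lambda$.
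Because $|P|+|N|\le n$, one of the two sets has order at most $\lfloor n/2\rfloor\le s$. This gives $\rho_s(B)\ge\lambda>t$, a contradiction.
Hence the simple eigenvalue $\rho(B)$ is the only real eigenvalue of $B$ above $t$.

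\emph{Reducible case.} Pass to the Frobenius normal form.
Any irreducible diagonal block $B_k$ with $\rho(B_k)>t$ must have order greater than $s\ge\lfloor n/2\rfloor$. So there is at most one such block. There is at least one, because $\rho(B)\ge\rho_{s+1}(B)>t$.
For that block we have $t\ge\rho_s(B)\ge\rho_s(B_k)$ and $s\ge\lfloor n_k/2\rfloor$. Applying the irreducible case to it shows that $B$ has exactly one real eigenvalue above $t$, counted with multiplicity.
Your opening parity observation then yields $\det A=p(t)\le 0$.
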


The counterexamples G.A. Johnson gave in \cite{JohG82} were $4 \times 4$ matrices
in $L_1$. So these counterexamples  do not refute
Theorem \ref{det}.

As mentioned in \cite{FriN97} the statement of Theorem \ref{detnew} can not be improved. Thus, determinants of different matrices
in $L_s$ with $s < \nhalf$ can  have different signs.


We  do have  the following well-known results.

\begin{theorem}[\cite{BerP94} Theorem 6.2.3]
 Let $A$  be a $Z$-matrix. Then  the following are  equivalent:
 \begin{enumerate}
  \item $A$ is a nonsingular $M$-matrix,
  \item All  the principal minors of $A$ are positive,
  \item $A^{-1}$ exists and $A^{-1} \geq 0$. 
 \end{enumerate}
\end{theorem}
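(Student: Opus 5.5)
The plan is to prove the equivalences by a cycle of implications, writing the $Z$-matrix throughout as $A = tI - B$ with $B \geq 0$. The main tools are the Perron--Frobenius theorem and the Neumann series. I will prove $(1)\Rightarrow(3)\Rightarrow(1)$ and $(1)\Rightarrow(2)\Rightarrow(1)$. The definition of a nonsingular $M$-matrix used is: $A \in L_n$ and $A$ nonsingular, i.e. $t \geq \rho(B)$ and $\det A \neq 0$. Since $\rho(B)$ is an eigenvalue of $B$ by Perron--Frobenius, nonsingularity then means exactly $t > \rho(B)$.

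For $(1)\Rightarrow(3)$, note that $t > \rho(B) \geq 0$. I write $A = t(I - B/t)$ with $\rho(B/t) < 1$. The Neumann series then gives
\[ A^{-1} = \frac{1}{t}\sum_{k\geq 0} (B/t)^k \geq 0. \]

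For $(3)\Rightarrow(1)$, take a Perron vector $x \geq 0$, $x \neq 0$, of $B^T$, so that $B^T x = \rho(B)x$. Then $A^T x = (t-\rho(B))x$. Since $A$ is nonsingular, $t \neq \rho(B)$, and we can write $x = (t-\rho(B))(A^{-1})^T x$. Because $(A^{-1})^T \geq 0$ and $x \geq 0$, the vector $(A^{-1})^T x$ is nonnegative. It is also nonzero, since $(A^{-1})^T$ is nonsingular. Comparing a positive component of $x$ on both sides forces $t - \rho(B) > 0$, which is (1).

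For $(1)\Rightarrow(2)$, let $\tilde B$ be any principal submatrix of $B$. Monotonicity of the spectral radius for nonnegative matrices (a consequence of Perron--Frobenius) gives $\rho(\tilde B) \leq \rho(B) < t$. So every principal submatrix $tI - \tilde B$ of $A$ is itself a nonsingular $M$-matrix. It then suffices to show that a nonsingular $M$-matrix has positive determinant. Its eigenvalues are $t - \lambda$ with $\lambda$ an eigenvalue of $\tilde B$. The real ones satisfy $t - \lambda > 0$ because $|\lambda| \leq \rho(\tilde B) < t$, and the non-real ones occur in conjugate pairs with positive product $|t-\lambda|^2$. Hence the determinant, being the product of the eigenvalues, is positive.

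For $(2)\Rightarrow(1)$, positivity of $\det A$ gives nonsingularity, so it remains to show $t \geq \rho(B)$; I will derive a contradiction from $t < \rho(B)$. For $s \geq \rho(B)$ the matrix $sI - B$ is an $M$-matrix, and as shown above every principal minor is then positive when $s > \rho(B)$. So I consider $f(s) = \det(sI - B)$ for $s \in [t, \infty)$. Its real roots are real eigenvalues of $B$. The key claim is that $f(s) > 0$ for every $s \geq t$, which would contradict $f(\rho(B)) = 0$ since $\rho(B) \in [t,\infty)$. To prove the claim I expand
\[ \det\big((t+u)I - B\big) = \sum_{S \subseteq N} u^{\,n-|S|} \det A[S], \]
where the empty minor is taken as $1$. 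For $u \geq 0$ every term is nonnegative and the term $S = N$ equals $\det A > 0$, so the sum is strictly positive. This gives $f(s) > 0$ on $[t,\infty)$ and the desired contradiction.

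The main obstacle is this last implication, which is the only step needing the principal-minor expansion of $\det(uI + A)$. I will state that expansion as a standard identity rather than derive it.
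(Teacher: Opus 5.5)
The paper gives no proof of this statement; it is quoted from \cite{BerP94}, where it is part of a long list of conditions each equivalent, for a $Z$-matrix, to being a nonsingular $M$-matrix. Your argument is correct. It is a short self-contained proof in exactly the terms the paper uses: $M$-matrices are the class $L_n$, i.e.\ $A=tI-B$ with $B\ge 0$ and $t\ge\rho(B)$.

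Perron--Frobenius turns nonsingularity into $t>\rho(B)$. The Neumann series gives (1)$\Rightarrow$(3), and a Perron vector gives (3)$\Rightarrow$(1). Monotonicity of $\rho$ on principal submatrices, together with conjugate pairing of eigenvalues, gives (1)$\Rightarrow$(2). For (2)$\Rightarrow$(1), the expansion $\det(uI+A)=\sum_{S}u^{n-|S|}\det A[S]$ shows $\det(sI-B)>0$ for all $s\ge t$, so the Perron root cannot lie in $[t,\infty)$.

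In that last step the expansion already yields $t>\rho(B)$ directly. Your remark that $sI-B$ is an $M$-matrix for $s\ge\rho(B)$ is therefore not needed.

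One bookkeeping point should be made explicit: a $Z$-matrix has many splittings $tI-B$. In implications starting from (1), use the splitting that (1) provides; in those ending in (1), any splitting works. Alternatively, note that $t-\rho(B)$ does not depend on the splitting, because $\rho(B+cI)=\rho(B)+c$ for $B\ge 0$ and $c\ge 0$.

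The citation gives the paper the full catalogue of equivalent conditions. Your route gives a proof that uses only Perron--Frobenius and an elementary determinant identity.
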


Moreover

\begin{theorem}[\cite{Joh82} Theorem 2.7]
 Let $A$  be a $Z$-matrix. Then  the following are  equivalent:
 \begin{enumerate}
  \item $A$ is a nonsingular $N_0$-matrix,
  \item   $A^{-1}$ exists,     $A^{-1} \leq 0$ and is irreducible. 
 \end{enumerate}
\end{theorem}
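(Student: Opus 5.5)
The statement to prove is the equivalence: a $Z$-matrix $A$ is a nonsingular $N_0$-matrix, i.e. $A\in L_{n-1}$, if and only if $A^{-1}$ exists, $A^{-1}\le 0$ and $A^{-1}$ is irreducible. Throughout I write $A=tI-B$ with $B\ge 0$. Membership in $L_{n-1}$ then means $\rho_{n-1}(B)\le t<\rho(B)$: every proper principal submatrix of $A$ is an $M$-matrix, but $A$ itself is not.

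\emph{(1) $\Rightarrow$ (2).} Since $t<\rho(B)$ while all proper principal submatrices satisfy $\rho_{n-1}(B)\le t$, I first argue that $B$ is irreducible when $n\ge 2$ and $t>\rho_{n-1}(B)$. If $B$ were reducible, $\rho(B)$ would equal the spectral radius of a proper diagonal block. That would give $\rho(B)\le\rho_{n-1}(B)\le t$, a contradiction. (The boundary case $t=\rho_{n-1}(B)$ is handled the same way.)

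Next I use the Perron--Frobenius monotonicity of $\rho$ on principal submatrices. It gives $\rho(B_{ii})<\rho(B)$ for irreducible $B$, where $B_{ii}$ denotes the principal submatrix of $B$ obtained by deleting row and column $i$. Consider the function $\det(sI-B)$. It is positive for $s>\rho(B)$ and has its largest root at $\rho(B)$. Since $t\in[\rho_{n-1}(B),\rho(B))$ and $\det A\neq 0$, I can show that $\rho(B)$ is the only root of $\det(sI-B)$ exceeding $\rho_{n-1}(B)$. To see this, use the interlacing of Perron roots, $\lambda_2(B)\le\rho_{n-1}(B)$. It follows that $\det A<0$, consistent with Theorem \ref{detnew}.

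For the entries of the inverse I use $A^{-1}=\operatorname{adj}(A)/\det A$. The diagonal cofactors are $\det(tI-B_{ii})\ge 0$, and they are in fact positive because nonsingularity forces $t>\rho(B_{ii})$ in the relevant cases. For an off-diagonal cofactor, the $(i,j)$ entry of the adjugate equals $\sum_k (B^k)$-type path sums with nonnegative coefficients, namely the cofactor expansion of $(tI-B)$ restricted to the complement of $\{i,j\}$. This is nonnegative because $tI-B[\{i,j\}^c]$ is an $M$-matrix. Hence $\operatorname{adj}(A)\ge 0$, and irreducibility of $B$ makes it entrywise positive. Dividing by $\det A<0$ gives $A^{-1}<0$, which in particular is irreducible.

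\emph{(2) $\Rightarrow$ (1).} Let $C=A^{-1}\le 0$ be irreducible. Then $-C\ge 0$ is irreducible with Perron vector $x>0$, so $Ax=-\rho(-C)^{-1}x$. Thus $A$ has a negative eigenvalue with a positive eigenvector, and $A\notin L_n$. It remains to show that every proper principal submatrix, in particular each $A_{ii}$, is a nonsingular $M$-matrix. For this I use the Schur complement / Jacobi identity: $(A_{ii})^{-1}=C/c_{ii}$, the Schur complement of $c_{ii}$ in $C$, restricted to the complement of $i$. Its entries are $c_{kl}-c_{ki}c_{il}/c_{ii}$.

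\emph{Main obstacle.} The hardest step is proving that these Schur complements are nonnegative, i.e. that $c_{kl}c_{ii}\le c_{ki}c_{il}$. This does not follow from the sign pattern alone. Since $A$ is a $Z$-matrix, I plan to exploit $AC=I$ componentwise, taking column $l$ with $l\neq i$: $a_{ii}c_{il}+\sum_{m\ne i}a_{im}c_{ml}=0$. I would first try to get the inequality by comparing this with the analogous relation in column $i$, using $a_{im}\le 0$. If that fails, I will fall back on the spectral characterization: show $t>\rho(B_{ii})$ directly via the Collatz--Wielandt bound with the vector $-Cx$.
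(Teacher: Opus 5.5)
The main gap is in (2) $\Rightarrow$ (1). Your Perron-vector step correctly gives $t<\rho(B)$, but the direction stops there. The step you call the main obstacle is left as two tentative strategies, and both aim at statements that are false. Under (2) a diagonal entry of $C=A^{-1}$ may vanish. For example,
\[
A=\left[\begin{array}{rrr} 1&-1&-1\\ -1&1&0\\ -1&0&1\end{array}\right],\qquad
A^{-1}=\left[\begin{array}{rrr} -1&-1&-1\\ -1&0&-1\\ -1&-1&0\end{array}\right].
\]
Here $A=I-B$, i.e.\ $t=1$, and $\rho_2(B)=1<\sqrt{2}=\rho(B)$. So (1) and (2) both hold, yet $c_{22}=0$. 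Consequences:
\begin{itemize}
\item $\det A_{22}=c_{22}\det A=0$, so $A_{22}$ is a \emph{singular} $M$-matrix, not a nonsingular one as you claim.
\item The Jacobi identity $(A_{ii})^{-1}=C/c_{ii}$ is not available.
\item Your fallback goal $t>\rho(B_{ii})$ fails, since $t=\rho(B_{22})$.
\item The Collatz--Wielandt idea with $-Cx$ gives nothing, because $-Cx=\rho(-C)x$ only carries information about $A$, not about $A_{ii}$.
\end{itemize}

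What is needed is only $\rho(B_{ii})\le t$. The missing idea is to shift $t$ upward, where the inverse becomes strictly negative. Your Perron step gives $Bx=(t+1/r)x$ with $x>0$ and $r=\rho(-C)$, so $\rho(B)=t+1/r$. For $t<s<\rho(B)$ one has $sI-B=A\,(I+(s-t)C)$ and $(s-t)r<1$, hence
\[
(sI-B)^{-1}=-\sum_{k\ge 0}(s-t)^k(-C)^{k+1}.
\]
This is entrywise negative because $-C\ge 0$ is irreducible. At this $s$ your column idea works. Let $y:=-(sI-B)^{-1}e_i>0$, and let $y'$ and $b'$ denote $y$ and the $i$th column of $B$ with their $i$th entries deleted. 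Then $(sI-B_{ii})\,y'=y_i\,b'\ge 0$. Thus $B_{ii}y'\le s\,y'$ with $y'>0$, so $\rho(B_{ii})\le s$. Letting $s\downarrow t$ gives $\rho_{n-1}(B)\le t<\rho(B)$.

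In (1) $\Rightarrow$ (2) the same example refutes your intermediate claim $A^{-1}<0$. The diagonal cofactor $\det(tI-B_{ii})$ vanishes whenever $t=\rho(B_{ii})$, and nonsingularity of $A$ does not exclude this boundary case. Only $A^{-1}\le 0$ is required, so $\det(tI-B_{ii})\ge 0$ suffices. However, irreducibility of $A^{-1}$ must then be obtained differently. You did show that $B$, hence $A$, is irreducible, and the inverse of a nonsingular irreducible matrix is irreducible (a permuted block triangular form of $A^{-1}$ would pass to $A$).

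Two further justifications should be replaced. First, there is no Cauchy interlacing for nonsymmetric $B$. Instead, $\frac{d}{ds}\det(sI-B)=\sum_i\det(sI-B_{ii})>0$ for $s>\rho_{n-1}(B)$, which yields $\det A<0$ directly. Second, the off-diagonal cofactors cannot be Neumann-type sums of powers of $B$, since $t<\rho(B)$ and the series diverges. Use instead, with $W=N\setminus\{i,j\}$,
\[
\mathrm{adj}(A)_{ij}=-a_{ij}\det A[W]+A[i,W]\,\mathrm{adj}(A[W])\,A[W,j].
\]
Both terms are nonnegative because $A[W]$ is an $M$-matrix.
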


The next Theorem gives a characterization of matrices in $L_s$ using their inverses (see \cite{Smi95} and \cite{Nab97}).

\begin{theorem} \label{equiformu}
Let $ A \in \bRn$ be a nonsingular $Z$--matrix. Then $A \in L_s$ if and only
if one of the following alternative cases a) or b) holds
\begin{enumerate}
\item[a)] \begin{enumerate} \item[i)]   $det A < 0$, 
\item[ii)] all principal minors of $A^{-1}$ of order greater or equal $n-s$ are nonpositive,
\item[iii)]   there exists a principal minor of $A^{-1}$ of order $n-s-1$, which is positive; \end{enumerate}
\item[b)] \begin{enumerate} \item[i)]   $det A > 0$, 
\item[ii)]   all principal minors of $A^{-1}$ of order greater or equal $n-s$ are nonnegative,
\item[iii)]   there exists a principal minor of $A^{-1}$ of order $n-s-1$, which is negative.
\end{enumerate}
\end{enumerate}
\end{theorem}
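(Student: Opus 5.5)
The plan is to translate membership in $L_s$ into sign information on principal minors of $A$ and then pass to $A^{-1}$ via Jacobi's identity. Write $A = tI - B$ with $B \geq 0$, and for an index set $\alpha$ let $A[\alpha]$ denote the corresponding principal submatrix. Then $A[\alpha] = tI - B[\alpha]$. The first step is the standard Perron--Frobenius fact that $A[\alpha]$ is a (possibly singular) $M$-matrix if and only if $t \geq \rho(B[\alpha])$, since the spectral radius of a nonnegative principal submatrix is monotone in $\alpha$. Hence $A \in L_s$ says precisely two things: every principal submatrix of order $\leq s$ is an $M$-matrix, so all principal minors of order $\leq s$ are nonnegative, and some principal submatrix of order $s+1$ is not an $M$-matrix.

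The second step uses Jacobi's identity for the nonsingular $A$. With $\alpha^c$ the complement of $\alpha$,
$$\det A^{-1}[\alpha^c] = \frac{\det A[\alpha]}{\det A}.$$
Principal minors of $A^{-1}$ of order $\geq n-s$ therefore correspond to principal minors of $A$ of order $\leq s$, divided by $\det A$. Since $A$ is nonsingular, $\det A \neq 0$, and we split into the cases $\det A < 0$ and $\det A > 0$; these are exactly alternatives a) and b). In each case, condition ii) of the theorem is equivalent to "all principal minors of $A$ of order $\leq s$ are nonnegative". The forward direction is then immediate. For iii), pick $\alpha$ with $|\alpha| = s+1$ such that $A[\alpha]$ is not an $M$-matrix. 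All proper principal submatrices of $A[\alpha]$ have order $\leq s$ and are therefore $M$-matrices. A $Z$-matrix all of whose proper principal minors are nonnegative fails to be an $M$-matrix only if its determinant is negative (the almost-$M$ situation: $t < \rho(B[\alpha])$ while $t \geq \rho$ of every maximal proper submatrix forces $t$ to lie between the Perron root and the next real eigenvalue, giving $\det < 0$). So $\det A[\alpha] < 0$, and Jacobi's identity gives a principal minor of $A^{-1}$ of order $n-s-1$ whose sign is opposite to that of $\det A$. This is exactly iii) in each case.

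For the converse, assume a) or b). By Jacobi's identity, condition ii) gives that all principal minors of $A$ of order $\leq s$ are nonnegative. We then need the $Z$-matrix fact that nonnegative principal minors of orders $1,\dots,s$ force every principal submatrix of order $\leq s$ to be an $M$-matrix; this follows from the equivalence in Theorem [BerP94] applied to the singular or $\epsilon$-perturbed case, $A[\alpha] + \epsilon I$. Hence $t \geq \rho_s(B)$. Condition iii) yields some $\alpha$ of order $s+1$ with $\det A[\alpha] < 0$, so $A[\alpha]$ is not an $M$-matrix and $t < \rho_{s+1}(B)$. The edge cases $s = 0$ and $s = n$ need separate attention: for $s = n$ the order $n-s-1$ is $-1$ and iii) must be read as vacuous or the classes must be interpreted with the conventions $\rho_0 = -\infty$, $\rho_{n+1} = \infty$.

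I expect the main obstacle to be the "almost $M$-matrix" step: showing that a $Z$-matrix of order $s+1$ whose proper principal submatrices are all $M$-matrices but which is not itself an $M$-matrix has strictly negative determinant, rather than zero. I would try to prove this via the characteristic polynomial of $B[\alpha]$. Since $t \geq \rho$ of each maximal proper principal submatrix, interlacing-type monotonicity (the derivative of $\det(\lambda I - B[\alpha])$ is the sum of the proper maximal principal minors) shows that $\det(\lambda I - B[\alpha])$ has only the Perron root in $(t, \infty)$. It is therefore negative at $\lambda = t < \rho(B[\alpha])$, unless $t$ equals a root, which would make the derivative zero and contradict the structure.
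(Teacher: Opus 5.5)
Your proof is correct, and it is complete apart from the convention noted at the end. The paper gives no proof of this statement; it quotes it from \cite{Smi95} and \cite{Nab97}, so there is no in-paper argument to compare with. Your route is sound. First you show that $A\in L_s$ if and only if all principal minors of $A$ of order at most $s$ are nonnegative and some principal minor of order $s+1$ is negative. Then you transfer this to $A^{-1}$ via Jacobi's identity $\det A^{-1}[\alpha^c]=\det A[\alpha]/\det A$, with the sign of $\det A$ selecting case a) or b).

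The step you flag as the main obstacle is actually immediate from the fact you already use in the converse direction. That fact is that a $Z$-matrix all of whose principal minors are nonnegative is an $M$-matrix, since every principal minor of order $k$ of $C+\varepsilon I$ is then at least $\varepsilon^{k}>0$. Take $|\alpha|=s+1$ with $A[\alpha]$ not an $M$-matrix. All proper principal minors of $A[\alpha]$ are nonnegative, so $\det A[\alpha]\ge 0$ would make $A[\alpha]$ an $M$-matrix. Hence $\det A[\alpha]<0$, and no spectral argument is needed.

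Your characteristic-polynomial argument also works, but state it cleanly. Put $p(\lambda)=\det(\lambda I-B[\alpha])$. Then $p'(\lambda)=\sum_{i\in\alpha}\det(\lambda I-B[\alpha\setminus\{i\}])>0$ for $\lambda>t$. So $p$ is strictly increasing on $[t,\infty)$, and $p(t)<p(\rho(B[\alpha]))=0$. Your clause that ``$t$ being a root would make the derivative zero'' is only true via Rolle on $(t,\rho(B[\alpha]))$, and it is unnecessary.

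On the boundary cases:
\begin{itemize}
\item For $s=n$, condition iii) refers to minors of order $-1$ and must be read as vacuous; read literally, the statement fails for nonsingular $M$-matrices.
\item For $s=n-1$, iii) concerns the order-$0$ minor, which is $1$. This forces case a), consistent with $\det A<0$ for nonsingular $N_0$-matrices.
\item Condition ii) always includes $\det A^{-1}$, which corresponds to the empty minor of $A$.
\end{itemize}
Your translation handles all of these once you state the convention $\det A[\emptyset]=1$ explicitly.
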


\subsection{Acyclic matrices}

Klein  established in \cite{KLe82} some results on matrices 
whose graphs are trees  or in other words, are irreducible and acyclic. We will use these results in the next section.

But  we first   need some graph theoretical notations and definitions. Here  we follow \cite{Ha72}. An {\it (edge) weighted  graph
G = (V,E)} of $n+1$ vertices is a graph with vertex set $V = \{0, \ldots, n\}$ and 
edges $e_{ij} \in E$ between the 
vertices ($i,j \in \{0,1,\ldots,n\}$) labeled by (nonzero) weights $w_{i,j} \in \bR$.


A {\it path} 
from vertex $i$ to vertex $j$, denoted by $P_{i,j}$, is a  ordered sequence of edges 
$\left( (k_1,k_2), (k_2, k_3), \ldots (k_{r-1},k_r)\right) $, where $k_1 = i$ and
$k_r = j$. The path $P_{i,j}$ is a cycle , if $k_1 = i = j = k_r$, $r \geq 3$ and $k_1, \ldots, k_{r-1}$ are distinct. 
For a vertex $i$ the neighborhood of $i$ is the set $N(i) := \{j \in V | (i,j) \in  E\}.$ The  cardinality of $N(i)$ is the degree of $i$ and is denoted by $deg(i)$.  

A graph is acyclic if it has no cycles. A {\it tree} is a connected acyclic 
graph. Equivalently, a tree is a graph for which there exists a
unique path between  any two vertices $i$ and $j$. 
A {\it rooted tree}  is a tree with a prominent vertex called the root.

Here we distinguish between trees $\Gamma $ and rooted trees $\Gamma_{(0)}$. 
We always assume that trees have vertex set $\{1,\ldots,n\}$ while for 
rooted trees $\{0,1,\ldots, n\}$ is the vertex set and the root is labeled by $0$. 
For a given tree $\Gamma$ we 
construct a (special) rooted tree $\Gamma_{(0)}$ by adding a new vertex $0$, the root,
and a new edge $e_{0,1}$ from the root to vertex 1 to the old tree. \\


We number the vertices of the trees in the following way. 
We start with an 
arbitrary vertex which is then labeled with $1$. 
We then proceed 
recursively by a depth-first 
search or numbering (dfs), see \cite{AHU}.

A {\it branch} starting at vertex $i$ of the  tree $\Gamma$  is the connected subgraph of $\Gamma$ including vertex $i$ obtained by deleting the unique edge $e_{j,i}$ with $ j < i $. Vertex $i$ then becomes the root of this particular branch. Thus  we label the vertices recursively branch by branch.

The {\it graph} $G(A)$ 
of an $n \times n$ matrix $A = [a_{i,j}]$ is the undirected graph
consisting of $n$ vertices $\{1,\ldots,n\}$ such that there is an edge between
vertex $i$ and vertex $j$ if and only if $a_{i,j} \neq 0 $ or $a_{j,i} \neq 0 $.

$A$ is called {\it treediagonal} by Klein \cite{KLe82} if $G(A)$ is 
a forest, i.e. a collection of trees. Here we prefer the name 
{\it acyclic matrices} for matrices $A$ whose graphs  
$G(A)$ are forests.  We always consider irreducible acyclic matrices $A$, thus $G(A)$ is a tree.

\begin{definition}
An $n \times n$ matrix $C = [c_{i,j}]$ satisfies the  {\it treeangle property} with respect to  a given tree $\Gamma$ if for every $i,j,k \in V$ with $P_{i,k} \subseteq P_{i,j}$ it holds
\beq \label{treeangle}
 c_{i,j}c_{k,k} = c_{i,k}c_{k,j}.
\eeq
Here $P_{i,j}$  denotes the unique path between the vertices $i$ and $j$ in the tree $\Gamma$. 
\end{definition}

Klein then proved the following theorems 

\begin{theorem} \label{kl1}
Let $\Gamma$ be a tree. If a non-singular matrix $C$ satisfies the treeangle
property with respect to $\Gamma$ and if $c_{i,i} \neq 0$ for all interior
vertices, then $C^{-1}$ is treediagonal with respect to $\Gamma$, i.e. 
$C^{-1}$ is acyclic. The entries of $C^{-1}$  are given by
\begin{equation} \label{formula-inv}
(C^{-1})_{ij} = \left\{ \begin{array}{ll}
-\frac{c_{ij}}{d_{ij}} & i \neq j, (i,j) \in E,\\
\frac{c_{kk}}{d_{ij}} & i = j, deg(i) = 1, (i,k) \in E,\\
\frac{1}{c_{ii}}( 1 + \sum_{k \in N(i)} \frac{c_{ik}c_{ki}}{d_{ik}} & i = j, deg(i) \geq 2,\\
0 &  otherwise
\end{array} \right.
\end{equation}
where
\begin{equation} \label{def-dij}
d_{ij} := c_{ii}c_{jj} -  c_{ij}c_{ji}.
\end{equation}
\end{theorem}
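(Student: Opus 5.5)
Let $B$ denote the matrix defined entrywise by the right-hand side of (\ref{formula-inv}). The plan is to check directly that $CB=I$; since $C$ is nonsingular, this gives $B=C^{-1}$. The form of $B$ then shows at once that $C^{-1}$ is acyclic with respect to $\Gamma$, because its off-diagonal entries vanish except on edges of $\Gamma$. Two remarks before computing. First, we need $d_{ij}\neq 0$ for every edge $(i,j)$; I expect this to follow from nonsingularity of $C$ together with the treeangle property. Removing the edge splits the tree into two parts, and if $d_{ij}=0$ then the row of $j$ is proportional to the row of $i$ on the columns of one part, which should force $C$ to be singular. Second, I will read the leaf case as $c_{kk}/d_{ik}$, where $k$ is the unique neighbour of the leaf $i$.

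The key tool is a factorization from the treeangle property. For an edge $(i,k)$ and any vertex $m$ whose path to $i$ passes through $k$, we have $P_{i,k}\subseteq P_{i,m}$, so $c_{i,m}c_{k,k}=c_{i,k}c_{k,m}$. When $c_{kk}\neq 0$ this reads $c_{i,m}=c_{i,k}c_{k,m}/c_{kk}$. Fix a column $m$ of $CB$, so we need $(CB)_{lm}=\sum_j c_{lj}b_{jm}$. Only $j=m$ and the neighbours $j\in N(m)$ contribute. For $l\neq m$, let $k\in N(m)$ be the neighbour of $m$ lying on $P_{l,m}$. For every other neighbour $j\neq k$ of $m$, the path from $l$ to $j$ passes through $m$, so $c_{lj}c_{mm}=c_{lm}c_{mj}$. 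If instead $l$ lies in the branch through $k$, then $c_{lm}c_{kk}=c_{lk}c_{km}$. Substituting these relations, each contribution becomes a multiple of $c_{lm}$ or $c_{lk}$. The terms should then cancel using the diagonal formula $b_{mm}=\frac{1}{c_{mm}}\big(1+\sum_{j\in N(m)}\frac{c_{mj}c_{jm}}{d_{mj}}\big)$ and the identity $c_{mm}c_{kk}-c_{mk}c_{km}=d_{mk}$. The diagonal entry $(CB)_{mm}=1$ is the same computation with the extra term $c_{mm}b_{mm}$, which supplies the $1$.

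I will split into cases according to whether $m$ is a leaf or an interior vertex. The hypothesis $c_{mm}\neq 0$ is available exactly when $m$ is interior, and this is what lets us divide by $c_{mm}$ in the formula for $b_{mm}$. For a leaf $m$ with unique neighbour $k$, we have $k$ interior when $n\ge 3$, so $c_{kk}\neq 0$. The column of $B$ is then $b_{mm}=c_{kk}/d_{mk}$ and $b_{km}=-c_{km}/d_{mk}$. The sum becomes $(c_{lm}c_{kk}-c_{lk}c_{km})/d_{mk}$. This vanishes for $l\neq m$, by the treeangle property applied through $k$ (and trivially for $l=k$), and it equals $1$ for $l=m$. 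The case $n=2$ is the direct $2\times 2$ inverse.

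The step I expect to be the main obstacle is the bookkeeping in the interior case. There, $l$ lies either in the branch through one particular neighbour $k$ of $m$, or $l=m$, and the treeangle relations must be applied with the correct orientation for each neighbour. The danger is applying a factorization that would require dividing by $c_{jj}$ at a leaf $j$, which may be zero. I will therefore always factor through $m$ or through $k$, using $c_{mm}\neq 0$, rather than through the neighbour $j$. A full sketch also has to establish $d_{ij}\neq 0$ rigorously. If the direct argument for this is awkward, I will fall back on showing that $\det C$ factors, up to nonzero factors, as a product of the $d_{ij}$ over edges divided by powers of the interior diagonal entries, and proving that factorization by induction on leaves.
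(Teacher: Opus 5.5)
The paper gives no proof of this theorem; it quotes it from Klein \cite{KLe82}, so there is no in-paper argument to compare with. Your direct route is a correct, self-contained proof: define $B$ by the formula and check $CB=I$ column by column. The column computations go through exactly as you outline.

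Your worry about dividing by a zero leaf diagonal does not arise. In an interior column $m$ with $l\neq m$, you factor only through $m$ and, when $l\neq k$, through $k$. Such a $k$ lies strictly inside $P_{l,m}$, so it is interior. In a leaf column the identity $c_{lm}c_{kk}=c_{lk}c_{km}$ is used without dividing, so $c_{kk}\neq0$ is not even needed.

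Your approach also gives a converse that the citation does not. Nonsingularity is used only to get $d_{ij}\neq0$ on edges. So the same computation shows that a matrix with the treeangle property and nonzero interior diagonal is nonsingular as soon as all edge values $d_{ij}$ are nonzero.

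The step $d_{ij}\neq0$ is the one place your sketch needs tightening. As phrased, ``row $j$ proportional to row $i$ on the columns of one part'' would not force singularity. It is also not where $d_{ij}=0$ enters, because the treeangle property already gives such proportionality on each part separately. Write $T_i,T_j$ for the components of $\Gamma$ after deleting the edge $(i,j)$. Regardless of $d_{ij}$, you have $c_{jm}c_{ii}=c_{ji}c_{im}$ for $m\in T_i$ and $c_{im}c_{jj}=c_{ij}c_{jm}$ for $m\in T_j$. The condition $d_{ij}=0$ is what makes the two ratios agree.

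Concretely, suppose $c_{ii}c_{jj}\neq0$ and $d_{ij}=0$, and let $e_i,e_j$ be unit vectors. Then $x=c_{ji}e_i-c_{ii}e_j\neq0$ satisfies $x^TC=0$. The entries indexed by $T_i$ vanish by the first relation. Those indexed by $T_j$ vanish after multiplying by $c_{jj}$ and using $c_{ij}c_{ji}=c_{ii}c_{jj}$.

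You must also cover a zero diagonal entry at an endpoint, say $c_{ii}=0$. If $n\geq3$, then $i$ is a leaf and its neighbour $j$ is interior. Here $d_{ij}=-c_{ij}c_{ji}=0$ forces $c_{ij}=0$ or $c_{ji}=0$. The treeangle relations through $j$ (where $c_{jj}\neq0$) then make row $i$ or column $i$ of $C$ vanish. For $n=2$, $d_{12}=\det C$. With this, the determinant-factorization fallback is unnecessary.
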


\begin{theorem} \label{kl2}
Let $\Gamma$ be a tree. If $A$ is a non-singular treediagonal matrix with respect to $\Gamma$ then $A^{-1}$ satisfies the  treeangle property with respect to 
$\Gamma$.
\end{theorem}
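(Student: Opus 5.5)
We prove Theorem \ref{kl2} by induction on $n$, using a block decomposition of $A$ along a leaf of the tree together with the Schur complement formula for the inverse. First we reduce to the case where $\Gamma=G(A)$ is a tree and $A$ is irreducible. If $G(A)$ is merely a subgraph of $\Gamma$, then one can either perturb $A$ to have full tree pattern and pass to the limit, since (\ref{treeangle}) is a closed polynomial condition and $A\mapsto A^{-1}$ is continuous on nonsingular matrices; or one handles zero entries directly in the argument below. The cases $n=1,2$ are trivial, because every triple $i,j,k$ with $P_{i,k}\subseteq P_{i,j}$ has $k\in\{i,j\}$, and then (\ref{treeangle}) holds identically.

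For the induction step, choose a leaf $\ell$ of $\Gamma$ with unique neighbour $p$, and write $A$ with $\ell$ ordered last:
\[ A=\left[\begin{array}{cc} A_{1,1} & a\,e_p \\ b\,e_p^T & \alpha\end{array}\right], \]
where $A_{1,1}$ is acyclic with respect to $\Gamma\setminus\{\ell\}$. The step splits according to whether $A_{1,1}$ is singular.

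\textbf{Generic case: $A_{1,1}$ nonsingular.} Put $C_1=A_{1,1}^{-1}$ and let $\sigma=\alpha-ab\,(C_1)_{p,p}$, which is nonzero since $\det A=\sigma\det A_{1,1}$. The block inverse formula gives
\[ C_{i,j}=(C_1)_{i,j}+\tfrac{ab}{\sigma}(C_1)_{i,p}(C_1)_{p,j},\qquad C_{i,\ell}=-\tfrac{a}{\sigma}(C_1)_{i,p},\qquad C_{\ell,j}=-\tfrac{b}{\sigma}(C_1)_{p,j},\qquad C_{\ell,\ell}=\tfrac1\sigma . \]
By induction $C_1$ satisfies the treeangle property on $\Gamma\setminus\{\ell\}$. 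We then check (\ref{treeangle}) for all triples, by cases on whether $\ell\in\{i,j,k\}$.

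\textbf{Triples with $\ell\in\{i,j,k\}$.} These reduce, via the formulas above and the fact that the path to $\ell$ passes through $p$, to treeangle identities of $C_1$ involving the vertex $p$.

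\textbf{Triples in the old tree.} We need
\[ C_{i,j}C_{k,k}-C_{i,k}C_{k,j}=0 . \]
Expanding with the rank-one update produces the term
\[ (C_1)_{i,j}(C_1)_{k,k}-(C_1)_{i,k}(C_1)_{k,j}=0 \]
together with cross terms. The cross terms cancel exactly when
\[ (C_1)_{i,j}(C_1)_{k,p}(C_1)_{p,k}+(C_1)_{i,p}(C_1)_{p,j}(C_1)_{k,k}=(C_1)_{i,k}(C_1)_{k,p}(C_1)_{p,j}+(C_1)_{i,p}(C_1)_{p,k}(C_1)_{k,j}. \]
This last identity is the main obstacle. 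We prove it by locating $p$ relative to the path $P_{i,j}$: let $q$ be the vertex of $P_{i,j}$ closest to $p$, and split into cases according to whether $q$ lies on $P_{i,k}$ or on $P_{k,j}$. In each case we apply the treeangle relations of $C_1$ to factor through $q$ and $k$; for example, $(C_1)_{i,p}(C_1)_{k,k}=(C_1)_{i,k}(C_1)_{k,p}$ when $k\in P_{i,p}$.

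\textbf{Exceptional case: $A_{1,1}$ singular.} If some diagonal entries vanish, the factoring through $q$ and $k$ needs care, so it is cleaner to use a density argument. Nonsingular acyclic matrices with all leading principal submatrices (in a leaf-peeling order) nonsingular are dense in the nonsingular matrices with the given pattern. Hence the identity extends to the singular-$A_{1,1}$ case by continuity.
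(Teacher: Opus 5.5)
The paper gives no proof of this statement; it quotes it from Klein \cite{KLe82}, so your argument has to stand on its own. Most of it does. The Schur-complement formulas are correct. The triples containing $\ell$ reduce, with no division, to the treeangle relations of $C_1$ for the triples $(p,j,k)$ and $(i,p,k)$. The terms quadratic in $ab/\sigma$ cancel, and the first-order identity you isolate is exactly what remains. That identity follows from the treeangle property of $c:=C_1$ as you indicate, \emph{provided} $c_{k,k}\neq 0$. If $q\in P_{k,j}$, then $k\in P_{i,p}$, so $c_{i,p}c_{k,k}=c_{i,k}c_{k,p}$ turns the second term on the left into the first term on the right. The remainder $c_{p,k}(c_{i,j}c_{k,p}-c_{i,p}c_{k,j})$ then vanishes after multiplying by $c_{k,k}$ and using $c_{i,j}c_{k,k}=c_{i,k}c_{k,j}$ together with the previous relation. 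The case $q\in P_{i,k}$ is symmetric.

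The gap is in your last step. A singular $A_{1,1}$ and a vanishing $(C_1)_{k,k}$ are two different obstructions, and the dense class you invoke (leading principal submatrices nonsingular in a leaf-peeling order) only removes the first. Indeed $(C_1)_{k,k}=\det A[V\setminus\{\ell,k\}]/\det A[V\setminus\{\ell\}]$. For an interior $k$ the set $V\setminus\{\ell,k\}$ is disconnected, so it is never one of your leading sets. For example, on the path with edges $\{1,2\},\{2,3\},\{3,4\}$ and $\ell=4$, setting $a_{3,3}=0$ keeps all leading minors generically nonzero but gives $(C_1)_{2,2}=a_{1,1}a_{3,3}/\det A_{1,1}=0$. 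In that situation your factoring through $k$ has nothing to divide by, so the generic case is not established on the set whose closure you take.

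The repair is one line: use instead the matrices with the sparsity pattern of $\Gamma$ all of whose principal minors are nonzero. This set is the complement of the zero set of a polynomial that does not vanish at $I$, hence dense among the nonsingular matrices of that pattern. On it $A_{1,1}$ is nonsingular and every $(C_1)_{k,k}\neq 0$, so your computation is complete, and continuity of $A\mapsto A^{-1}$ finishes the proof.

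A density-free alternative is the cofactor formula $(A^{-1})_{i,j}=(-1)^{d(i,j)}\,\pi_{i,j}\det A[V\setminus P_{i,j}]/\det A$. Here $d(i,j)$ is the number of edges of $P_{i,j}$, and $\pi_{i,j}$ is the product of the entries $a_{r,s}$ along $P_{i,j}$ with $r$ the endpoint nearer $i$. The treeangle identity then reduces to the factorization of $\det A[\,\cdot\,]$ over the components of $\Gamma\setminus\{k\}$, and zero diagonal entries of $A^{-1}$ cause no difficulty.
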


Moreover we have (see \cite{Nab01})

\begin{proposition} \label{Nab01}
Let $A =[a_{i,j}] \in \bRn $ be non-singular, irreducible and acyclic. Assume
that the diagonal entries of $C= [c_{i,j}] := A^{-1}$ are nonzero. Then 
$c_{i,j} \neq 0$ for all $i, j \in \{1,\ldots, n \}$. Hence, if $A$ is  irreducible, acyclic and diagonally dominant, then all entries of $C^{-1}$ are different from zero.  
\end{proposition}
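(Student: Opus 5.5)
Assume $C = A^{-1}$ has all diagonal entries nonzero. I will prove that every entry $c_{i,j}$ is nonzero by induction on the length of the path $P_{i,j}$ in the tree $\Gamma = G(A)$, using the treeangle property (Theorem \ref{kl2}) to move along the path.

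\emph{Step 1: adjacent vertices.} Let $(i,j)$ be an edge of $\Gamma$, so $a_{i,j}\neq 0$ or $a_{j,i}\neq 0$. Since $A$ is acyclic, $C$ satisfies the treeangle property (Theorem \ref{kl2}), and all $c_{k,k}\neq 0$, so Theorem \ref{kl1} applies to $C$. Its formula \eqref{formula-inv} gives $a_{i,j} = (C^{-1})_{i,j} = -c_{i,j}/d_{i,j}$. If $a_{i,j}\neq 0$, then $c_{i,j}\neq 0$ directly. If instead $a_{j,i}\neq 0$, the same formula with the indices swapped gives $c_{j,i}\neq 0$.

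To get both off-diagonal entries nonzero, I would use the Schur complement: deleting the edge $(i,j)$ splits the tree into two branches, and the $2\times 2$ block of $C$ indexed by $\{i,j\}$ is the inverse of a Schur complement of $A$. This should show that $c_{i,j}=0$ forces $a_{i,j}=0$, and symmetrically for $c_{j,i}$.

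\emph{Step 2: induction along paths.} Suppose $P_{i,j}$ has length at least $2$, and let $k$ be the vertex on $P_{i,j}$ adjacent to $j$. Then $P_{i,k}\subseteq P_{i,j}$, and \eqref{treeangle} gives
\[ c_{i,j}\,c_{k,k} = c_{i,k}\,c_{k,j}. \]
By the induction hypothesis $c_{i,k}\neq 0$, by Step 1 $c_{k,j}\neq 0$, and $c_{k,k}\neq 0$ by assumption. Hence $c_{i,j} = c_{i,k}c_{k,j}/c_{k,k}\neq 0$.

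\emph{Step 3: the diagonally dominant case.} If $A$ is irreducible and diagonally dominant, I would show $c_{k,k}\neq 0$ by taking $c_{k,k} = \det A(k)/\det A$, where $A(k)$ is $A$ with row and column $k$ deleted. Each principal submatrix of an irreducibly diagonally dominant matrix is again diagonally dominant, with strict dominance in every row adjacent to $k$. Its components are irreducible and each contains a strictly dominant row, so the submatrix is nonsingular. The first part then gives that all entries of $A^{-1}$ are nonzero.

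The main obstacle is Step 1, where only one of $a_{i,j}, a_{j,i}$ is known to be nonzero. If the Schur complement argument fails, I would fall back on a determinant expansion: for an acyclic $A$, $c_{i,j}$ equals, up to sign, the product of the $A$-entries along $P_{i,j}$ times $\det$ of $A$ with the path vertices deleted, divided by $\det A$.
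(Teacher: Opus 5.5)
Your Step 2 (induction along the path via the treeangle identity with $k$ the neighbour of $j$ on $P_{i,j}$) and Step 3 are sound. Step 1, however, is left open, and neither of the repairs you suggest can close it. For an edge $(i,j)$ Klein's formula already gives the exact relation $c_{i,j}=-a_{i,j}d_{i,j}$, where $d_{i,j}=\det A(\{i,j\})/\det A\neq 0$ when $c_{i,i}c_{j,j}\neq 0$. So in the case you worry about, $a_{i,j}=0\neq a_{j,i}$, the entry $c_{i,j}$ really is zero. Your Schur complement argument would only reprove the implication ``$c_{i,j}=0\Rightarrow a_{i,j}=0$'', which you already have. The path/determinant expansion reproduces the same factor $a_{i,j}$ for adjacent vertices. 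The example $A=\left[\begin{array}{cc}1&1\\0&1\end{array}\right]$ shows this concretely: $G(A)$ is a tree and $A^{-1}$ has nonzero diagonal, yet $(A^{-1})_{2,1}=0$.

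The missing idea is to show that this case cannot occur, and this is exactly where irreducibility is used. Suppose $A$ is irreducible (its directed graph is strongly connected) and $G(A)$ is a tree. Then $a_{i,j}\neq0$ and $a_{j,i}\neq0$ for every edge. Otherwise, deleting the edge splits the vertices into $S_i\ni i$ and $S_j\ni j$, joined by a single arc in one direction only. There would then be no directed path back, so $A$ would be reducible. With this observation, Klein's formula applied to $(i,j)$ and to $(j,i)$ (note $d_{j,i}=d_{i,j}$) gives $c_{i,j}\neq0\neq c_{j,i}$, and your induction runs over all ordered pairs.

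The same fact is needed in Step 3: removing vertex $k$ makes row $l$ of $A(k)$ strictly dominant only if $a_{l,k}\neq 0$, not merely if $l$ is adjacent to $k$ in the undirected graph. In Step 3 you also need $A$ nonsingular, as in the first sentence of the statement. A tree Laplacian is irreducible, acyclic and diagonally dominant, but singular.
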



The above Theorems  describe the structure of inverses of acyclic 
matrices. However, it is not clear at all how one can describe 
the treeangle property in terms of matrices. To do so we need the following class of matrices, see \cite{Nab01}.

\begin{definition}
Let $\Gamma_{(0)}$ be a weighted rooted tree with vertex set $\{0,1,\ldots, n\}$, where $0$ denotes the root. A matrix $A = [a_{i,j}] \in \bRn$ is of 
{\it tree structure} with respect to $\Gamma_{(0)}$, if for all $i,j = 1,\ldots,n$
\beq \label{dis}
a_{i,j} & = & \sum_{\{r,s\} \in P_{i,0} \cap P_{j,0}} w_{r,s},  
\eeq
here $\{r,s\} \in P_{i,0} \cap P_{j,0}$ denotes a common edge of the paths 
$P_{i,0}$ and  $P_{j,0}$  and $w_{r,s}$ is its weight.
\end{definition}

Note that (\ref{dis}) defines a 'distance' or better an 'inverse distance' 
between the vertices of the tree. This distance was already used  
in \cite{NabV95b}. \\
If all   weights  are non-negative, matrices of tree structure are ultrametric matrices \cite{NabV94, VarN93, Fie00}.  We can also define non-symmetric matrices of tree structure by introducing a second set of weights which are used to build the $a_{j,i}$ similar as in (\ref{dis}). This than lead to generalized ultrametric matrices \cite{NabV95a, NabV95b, Nab97, McDNNST98, ElsNN98, DelMM14}. 
The next  subsection describes ultrametric matrices in detail. The last section gives some examples  for  matrices of tree structure.

In the following we give some properties of matrices of tree structure. Some of them are new and some of them are already given in \cite{Nab01}.

Theorem 3.3 in \cite{Nab01} is the following one. 

\begin{theorem} \label{iff-2}
Let $\Gamma$ be a tree and let $A = [a_{i,j}] \in \bRn$ be nonsingular. 
Then the following are equivalent:

\begin{itemize}
\item[(1)] $G(A) = \Gamma$ and $A\xi_n = \gamma \hat e_n$ and $\xi_n ^TA = \gamma \hat e_n^T$.

\item[(2)] $A^{-1}$ is symmetric and $A^{-1}$ is of tree structure 
with respect to the weighted rooted tree $\Gamma_{(0)}$ where the weights are 
$w_{i,j} = -1/a_{i,j}$ and $w_{0,1} = 1/\gamma$. 
\end{itemize}  
\end{theorem}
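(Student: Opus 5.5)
The plan is to prove both implications by writing down the inverse explicitly. Let $T=[t_{i,j}]$ be the matrix of tree structure with respect to $\Gamma_{(0)}$ with weights $w_{i,j}=-1/a_{i,j}$ for each edge $\{i,j\}$ of $\Gamma$ and $w_{0,1}=1/\gamma$. In both directions we will check that $AT=I$; the two conditions in (1) are exactly what this computation needs. Note that (2) forces $a_{i,j}=a_{j,i}$ on the edges, since $A=C^{-1}$ is symmetric. In (1) the symmetry of $A$ on the edges is implicit in writing the single weight $-1/a_{i,j}$.

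\emph{(1)$\Rightarrow$(2).} Fix a column index $k$ and a row $i$. Set $\phi(v):=t_{v,k}$, the sum of the weights on $P_{v,0}\cap P_{k,0}$. We compute
\[ (AT)_{i,k}=a_{i,i}\phi(i)+\sum_{j\in N(i)}a_{i,j}\phi(j). \]
The row-sum condition gives $a_{i,i}=\gamma\delta_{i,1}-\sum_{j\in N(i)}a_{i,j}$, so
\[ (AT)_{i,k}=\gamma\delta_{i,1}\phi(i)+\sum_{j\in N(i)}a_{i,j}\bigl(\phi(j)-\phi(i)\bigr). \]
For a neighbour $j$ of $i$, the difference $\phi(j)-\phi(i)$ is nonzero only if the edge $\{i,j\}$ lies on $P_{k,0}$. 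In that case it equals $\pm w_{i,j}=\mp 1/a_{i,j}$, with sign $+$ when $j$ is the endpoint closer to $k$.

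The remaining step is a case check on the position of $i$ relative to $P_{k,0}$.
\begin{itemize}
\item If $i$ is off the path, every difference vanishes, and $\phi(1)=1/\gamma$ does not enter because $i\neq 1$ in that case. So the entry is $0$.
\item If $i$ is an interior vertex of $P_{k,0}$ other than $1$, the two path edges at $i$ contribute $+1$ and $-1$, and the entry is $0$.
\item If $i=k\neq 1$, only the parent edge contributes, giving $-a_{i,p}\cdot(-1/a_{i,p})=1$.
\item If $i=1$, the term $\gamma\phi(1)=\gamma\cdot\frac1\gamma=1$ appears. When $k\neq 1$, the child edge of $1$ toward $k$ contributes $-1$, so the entry is $0$. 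When $k=1$ there is no such edge and the entry is $1$.
\end{itemize}
Hence $AT=I$, so $A^{-1}=T$, which is symmetric by construction.

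\emph{(2)$\Rightarrow$(1).} Let $C=A^{-1}=T$. We can use the same identity in reverse, or argue directly. Let $\tilde A$ be the matrix supported on $\Gamma$ with off-diagonal entries $-1/w_{i,j}$ and with diagonal entries chosen so that $\tilde A\xi_n=\gamma\hat e_n$. The first half shows $\tilde A T=I$, hence $A=T^{-1}=\tilde A$, and $A$ has all three properties. Here $G(A)=\Gamma$ holds because every edge weight is finite and nonzero, and the column-sum condition follows from symmetry. This yields both conditions in (1).

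The main obstacle is keeping the orientation signs straight in the case analysis, particularly at vertex $1$, where the root edge $w_{0,1}=1/\gamma$ replaces a parent edge. The other delicate point is noting that the symmetry of $A$ is needed, so that $a_{i,j}=a_{j,i}$ makes $w_{i,j}$ well defined in direction (1)$\Rightarrow$(2).
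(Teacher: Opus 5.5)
Your identity $(AT)_{i,k}=\gamma\delta_{i,1}\phi(i)+\sum_{j\in N(i)}a_{i,j}(\phi(j)-\phi(i))$ is correct, and so is the case analysis along $P_{k,0}$. The device of building $\tilde A$ for (2)$\Rightarrow$(1) also works; there $\tilde A$ is symmetric by construction, so nothing is missing in that direction.

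The gap is in (1)$\Rightarrow$(2). The cancellations ``$+1$ and $-1$'' read the weight of an edge $\{i,p\}$ as $-1/a_{i,p}$ in row $i$ and as $-1/a_{p,i}$ in row $p$. So you need $a_{i,p}=a_{p,i}$ on every edge. This is not ``implicit'' in the statement, for two reasons:
\begin{enumerate}
\item Symmetry of $A^{-1}$ is part of the conclusion (2), so it must be derived from (1).
\item The notation $w_{i,j}=-1/a_{i,j}$ is meaningful for an oriented choice of $(i,j)$ without any symmetry.
\end{enumerate}
Nor does symmetry follow from $G(A)=\Gamma$ and the row-sum condition alone. For example, $A=\left[\begin{array}{rr}2&-1\\-3&3\end{array}\right]$ has a single-edge graph, $A\xi_2=\hat e_2$, and a nonsymmetric inverse. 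Without symmetry, $G(A)=\Gamma$ also does not guarantee that both $a_{i,j}$ and $a_{j,i}$ are nonzero, so the weights could be undefined. Notice that your argument for this direction never uses the hypothesis $\xi_n^TA=\gamma\hat e_n^T$. That hypothesis is precisely what supplies the symmetry.

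The repair is short. Root $\Gamma$ at vertex $1$. For $v\neq 1$ with parent $p$, let $S_v$ be the vertex set of the branch at $v$. Because $G(A)=\Gamma$, we have $a_{u,w}=a_{w,u}=0$ for non-adjacent $u\neq w$, and $\{v,p\}$ is the only edge leaving $S_v$. Since $1\notin S_v$, summing the row-sum identities and the column-sum identities over $u\in S_v$ gives
\[ \sum_{u,w\in S_v}a_{u,w}+a_{v,p}=0 \quad\mbox{and}\quad \sum_{u,w\in S_v}a_{u,w}+a_{p,v}=0 , \]
hence $a_{v,p}=a_{p,v}$. Thus $A$ is symmetric, both entries on each edge are nonzero, and the weights $-1/a_{i,j}$ are well defined. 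Note also that $\gamma\neq 0$, since $A\xi_n=\gamma\hat e_n$ and $A$ is nonsingular, so $w_{0,1}=1/\gamma$ makes sense. With these observations placed before your case analysis, the proof is complete.
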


\begin{remark}
Note, since $\xi_n$ is the vector of all ones  and $\hat e_n$ is the vector with a one at first position and zeros else where, the first  statement of the above Theorem says, that the entries of the first  row and column of $A^{-1}$ are all  the same. 
Moreover, note  that the  reverse ordering of the vertices of a graph gives also a matrix of tree structure. Then  we have $A\xi_n = \gamma \tilde e_n$ and $\xi_n^TA = \gamma \tilde e_n^T$ and a similar statement as in Theorem \ref{iff-2} holds. 
\end{remark}

Theorem 3.9  in \cite{Nab01} is then
\begin{theorem} \label{theo-tree-row-2}
Let $C = [c_{i,j}] \in \bRn$ be of tree structure with respect to a given rooted tree. 
Then $C$ is nonsingular if and only if $C$ does not contain a row or column of zeros, and no two rows or two columns are the same. 
\end{theorem}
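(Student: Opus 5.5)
The plan is to compute $\det C$ explicitly by a row elimination along the tree, show that it equals the product of all edge weights of $\Gamma_{(0)}$, and then read off when a weight vanishes.

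First I will reinterpret (\ref{dis}). For a vertex $v$ let $h(v)$ be the sum of the weights on the path $P_{v,0}$. The common edges of $P_{i,0}$ and $P_{j,0}$ are exactly the edges of the path from the root to the lowest common ancestor $i\wedge j$ of $i$ and $j$. Hence $c_{i,j}=h(i\wedge j)$. In particular $C$ is symmetric, so every statement about rows transfers to columns. For $i\ge 2$ let $p(i)$ be the parent of $i$ in $\Gamma_{(0)}$, and set $p(1)=0$. By the depth-first numbering, $p(i)<i$, and every descendant of $i$ carries a larger label than $i$.

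The key step is the elimination. Let $L$ be the unit lower triangular matrix that replaces row $i$ by (row $i$) $-$ (row $p(i)$) for $i\ge 2$ and leaves row $1$ unchanged. Fix $i\ge 2$.
\begin{itemize}
\item If $j$ lies in the branch starting at $i$, then $i\wedge j=i$ and $p(i)\wedge j=p(i)$, so $c_{i,j}-c_{p(i),j}=w_{p(i),i}$.
\item Otherwise $i\wedge j=p(i)\wedge j$, so the difference is $0$.
\end{itemize}
Row $1$ equals $h(1)=w_{0,1}$ in every entry, since every $j$ is a descendant of $1$. Therefore
\[ LC = WS, \qquad W=\mathrm{diag}(w_{p(i),i}), \]
where $S=[s_{i,j}]$ has $s_{i,j}=1$ if $j$ is in the branch of $i$ and $s_{i,j}=0$ otherwise. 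By the dfs ordering $S$ is unit upper triangular. Hence $\det C=\prod_{i=1}^n w_{p(i),i}$, and $C$ is nonsingular if and only if every edge weight is nonzero.

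Both implications now follow. If $C$ has a zero row, or two equal rows, or (by symmetry) the same for columns, then $C$ is singular; this direction is trivial. Conversely, suppose $C$ is singular. Then some weight $w_{p(i),i}$ vanishes, and there are two cases.
\begin{itemize}
\item If $i=1$, then row $1$ of $C$ is $w_{0,1}\xi_n^T=0$, a zero row.
\item If $i\ge 2$, the computation above shows that rows $i$ and $p(i)$ differ exactly by $w_{p(i),i}$ on the branch of $i$ and agree elsewhere. So these two rows are equal.
\end{itemize}
This gives the equivalence.

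The main point requiring care is the bookkeeping in the elimination: checking that the dfs labelling really makes $S$ triangular, and that $L$ is triangular because $p(i)<i$. The identity $c_{i,j}=h(i\wedge j)$ is what makes the row differences collapse to the single weight $w_{p(i),i}$.
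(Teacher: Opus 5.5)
Your proof is correct, but it runs in the opposite direction from the paper. The paper does not prove this statement. It quotes it as Theorem 3.9 of \cite{Nab01} and then uses it as a black box to deduce Theorem~\ref{theo:nonzero-w} (nonsingular iff no weight vanishes). You instead prove the stronger identity $\det C=\prod_{i=1}^n w_{p(i),i}$ directly, which makes Theorem~\ref{theo:nonzero-w} immediate. You then recover the row/column criterion from your observation that a vanishing $w_{p(i),i}$ makes rows $i$ and $p(i)$ equal, or makes row $1$ zero when $i=1$.

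All steps check out. Since $L^{-1}=S^T$, your elimination is exactly the factorization $C=S^TWS$, with $S$ the unit upper triangular branch-indicator matrix; this is just $c_{i,j}=h(i\wedge j)$ in matrix form.

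Your route buys three things over the cited one:
\begin{itemize}
\item It is self-contained and quantitative.
\item It gives the sign of $\det C$ as $(-1)^k$, where $k$ is the number of negative weights. So $\det C>0$ in the setting of Theorem~\ref{invM} and $\det C<0$ in that of Theorem~\ref{invN0}.
\item It yields the inverse explicitly as $C^{-1}=S^{-1}W^{-1}(S^{-1})^T$. This is an acyclic matrix with entries $-1/w_{p(k),k}$ on the edges and $C^{-1}\xi_n=w_{0,1}^{-1}\hat e_n$, which is the symmetric case of Theorem~\ref{iff-2}.
\end{itemize}
The paper's ordering keeps the criterion in the entrywise form that parallels Theorem~\ref{nonsing} for (shifted) generalized ultrametric matrices.

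One small remark: you use that $1$ is the only child of the root, which is the paper's convention for $\Gamma_{(0)}$. If the root had several children, the same elimination works with a zero ``row $0$'', and a vanishing root weight $w_{0,i}$ then produces a zero row.
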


Both of the above theorems can be extended.
\begin{theorem} \label{theo:nonzero-w}
Let $C = [c_{i,j}] \in \bRn$ be of tree structure with respect to a given rooted tree. 
Then $C$ is nonsingular if and only if there is no zero  weight in the weighted tree.
\end{theorem}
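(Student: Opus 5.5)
Write $C=\sum_{e} w_e\, v_e v_e^T$, where the sum runs over all edges $e=\{r,s\}$ of the rooted tree $\Gamma_{(0)}$ and $v_e\in\bR^n$ is the indicator vector of the set of vertices $i\in\{1,\ldots,n\}$ whose path $P_{i,0}$ to the root contains $e$. Since the tree is rooted at $0$, this set is exactly the vertex set of the branch hanging below $e$. This decomposition is just a restatement of (\ref{dis}): $c_{i,j}$ is the sum of the weights of the edges common to $P_{i,0}$ and $P_{j,0}$. There are exactly $n$ edges, one for each vertex $i\ge 1$ (the edge from $i$ to its parent; for $i=1$ the parent is $0$). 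Denote by $v_i$ the indicator vector of the branch starting at $i$ (its subtree). Then
\[ C = V W V^T, \qquad V=[v_1,\ldots,v_n],\quad W=\mbox{diag}(w_1,\ldots,w_n), \]
where $w_i$ is the weight of the edge joining $i$ to its parent.

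The key step is that $V$ is nonsingular. The entry $V_{j,i}=1$ exactly when $j$ lies in the subtree rooted at $i$, and then $j\ge i$ by the depth-first numbering. Hence $V$ is lower triangular with ones on the diagonal, so $\det V=1$. (Without the dfs ordering the same conclusion follows after ordering the vertices by any linear extension of the ancestor order.) Consequently
\[ \det C=\prod_{i=1}^n w_i , \]
and $C$ is nonsingular if and only if no weight is zero. This also covers the edge $e_{0,1}$, since it corresponds to $i=1$.

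The only point needing care is to confirm that every one of the $n$ edges appears as a column of $V$, so that the count of vectors matches the dimension. This holds because each non-root vertex has a unique parent edge. Optionally, one can reconcile the result with Theorem \ref{theo-tree-row-2}. A zero weight $w_i$ makes rows $i$ and $\mbox{parent}(i)$ equal, since for every $j$ the two rows differ only by $w_i (v_i)_j$. If the parent is the root $0$, a zero weight $w_1$ instead makes row $1$ zero. This direction is not needed for the proof, since the determinant formula already gives both directions.
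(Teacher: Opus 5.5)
Your proof is correct, but it takes a different route from the paper.

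The paper computes nothing. It falls back on the criterion of Theorem \ref{theo-tree-row-2} from \cite{Nab01}: nonsingular iff there is no zero row or column and no two equal rows or columns. It notes that a zero weight produces a repeated row. For the converse it argues combinatorially, and rather tersely, that nonzero weights force any two columns to differ.

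You instead factor $C=VWV^T$, with $V$ the ancestor-indicator matrix. This $V$ is unit lower triangular because in the dfs numbering every parent has a smaller label than its children. That gives the closed formula $\det C=\prod_{i=1}^n w_i$ and both implications at once.

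Your argument is self-contained. It also covers the zero-row/zero-column part of the criterion, which the paper leaves implicit. And it gives more. Since $V^{-1}=I-N$, where $N$ has a $1$ in position $(j,p(j))$ for every vertex $j$ whose parent $p(j)$ is not the root, you obtain $C^{-1}=(I-N)^TW^{-1}(I-N)$. This shows directly that $C^{-1}$ is acyclic, with entries $-1/w_j$ on the edges of $\Gamma$ and $C^{-1}\xi_n=w_{0,1}^{-1}\hat e_n$; that is essentially Theorem \ref{iff-2}.

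The paper's route, in turn, uses a row/column criterion of the same type as Theorem \ref{nonsing} for (shifted) generalized ultrametric matrices. It does not depend on the symmetric rank-one decomposition, which is tied to the single-weight definition (\ref{dis}).
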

\begin{proof}
  We use Theorem \ref{theo-tree-row-2}.
  If there is a zero weight in the  tree then obviously two rows and columns are the same  and $C$  is singular.
  Now assume that  there is no zero weight. We then show that there  are no  two columns which are the same. If there would be such columns, then we need  to have two diagonal entries which are the same. So we obtain 
  \[
  a_{ii} = a_{jj} \quad \mbox{for some indices $i,k,j$ with } \quad i < k < j,
  \]
  since  we do not have zero weights. Moreover, $w_{kj} \neq 0$.
  But due to the tree structure we then have
  \[
  a_{ii} = a_{ij} =  a_{jj}.  
  \]
  But since  $w_{kj} \neq 0$  we obtain 
  \[
  a_{ik} \neq a_{jk}.
  \]
  Thus  there are no two columns which are the same.
  \end{proof}

Theorem \ref{theo:nonzero-w} guarantees that the weights  are not zero for nonsingular matrices. So Theorem \ref{iff-2} then gives simple formulas for the inverses of the considered matrices in both directions. 
The main theorem in \cite{Nab01} is the  following Theorem (see Theorem 3.4 in \cite{Nab01}). 

\begin{theorem} \label{main-structure}
Let $A \in \bRn$ be non-singular, irreducible and acyclic and let $\Gamma = G(A)$.  Assume that the diagonal entries of $A^{-1}$ are nonzero. 
Then there exist matrices  $T$ and $R$ of the form 

\beq \label{def-neu}
 T = \left[ \begin{array}{ccccc}
d_1 & d_1 & \cdots &\cdots & d_1  \\
f_1 & d_2 & \cdots &\cdots & d_2 \\
f_1 & f_2 & d_3 & \cdots & d_3 \\
 \vdots & \vdots & & \ddots & \vdots \\
f_1 & f_2 & \cdots & \cdots & d_n
\end{array} \right] \quad \quad
R = \left[ \begin{array}{ccccc}
f_1 & f_2 & \cdots & \cdots & f_n  \\
d_2 & f_2 & \cdots & \cdots & f_n \\
d_3 & d_3 & f_3 & \cdots & f_n \\
 \vdots & \vdots & & \ddots & \vdots \\
d_n & d_n & \cdots & \cdots & f_n
\end{array} \right],
\eeq
with $f_1 = d_1$, and a matrix $U$ of tree structure with respect to the weighted tree $\Gamma_{(0)}$ such that

\beqo
A^{-1} = T \circ U \circ R.
\eeqo
The sequences $\{f_i\}$ and $\{d_i\}$ are given by 
$F = diag(f_1,\ldots,f_n) = diag(\hat e_n^TA^{-1})$ and 
$D = diag(d_1,\ldots,d_n) = diag(A^{-1}\hat e_n)$.

Conversely, let $U = [u_{i,j}] \in \bRn$ be non-singular. If $U$ is of tree structure with respect to a given rooted
tree $\Gamma_{(0)}$. Then for any matrices $T$ and $R$ of the form 
$(\ref{def-neu})$ with $f_1 = d_1$ and  $f_id_i \neq 0$ for all $i$, the matrix 
$(T \circ U \circ R)^{-1}$ is irreducible, acyclic and $G((T \circ U \circ R)^{-1}) = \Gamma.$ Moreover
\begin{equation} \label{rowsum}
 U\xi_n = \gamma \hat e_n, \ \xi_n^TU = \gamma \hat e_n \ \mbox{for } \gamma \in \bR, \  \xi_n^T = [1, \ldots, 1] \in \bR^n, \hat e_n = [1,0 \ldots , 0]^T.
\end{equation}
\end{theorem}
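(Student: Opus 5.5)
The plan is to reduce everything to one observation about the shapes in (\ref{def-neu}): the Hadamard product $T\circ R$ is a rank-one matrix. Row $i$ of $T$ has entries $f_j$ for $j<i$ and $d_i$ for $j\ge i$. Row $i$ of $R$ has entries $d_i$ for $j<i$ and $f_j$ for $j\ge i$. Hence $(T\circ R)_{i,j}=d_if_j$ for all $i,j$, and the diagonal entry $d_if_i$ is consistent with this. Consequently $T\circ U\circ R = DUF$ with $D=diag(d_1,\ldots,d_n)$ and $F=diag(f_1,\ldots,f_n)$. Both directions of Theorem \ref{main-structure} then become statements about a diagonal scaling of a matrix of tree structure.

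\emph{First direction.} Let $C=[c_{i,j}]=A^{-1}$ and put $d_i=c_{i,1}$, $f_j=c_{1,j}$, so that $f_1=d_1=c_{1,1}$ and $D,F$ are exactly the diagonals in the statement. By Proposition \ref{Nab01} every $c_{i,j}\neq 0$, so we may define $u_{i,j}:=c_{i,j}/(c_{i,1}c_{1,j})$, which gives $C=DUF=T\circ U\circ R$. It remains to show that $U$ is of tree structure with respect to $\Gamma_{(0)}$.

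Root $\Gamma$ at vertex $1$. For $i,j$, let $m$ be the vertex where $P_{i,1}$ and $P_{j,1}$ merge. Then $m$ lies on $P_{i,j}$, on $P_{i,1}$ and on $P_{j,1}$. By Theorem \ref{kl2}, $C$ has the treeangle property, which gives
\[
c_{i,j}c_{m,m}=c_{i,m}c_{m,j},\qquad c_{i,1}c_{m,m}=c_{i,m}c_{m,1},\qquad c_{1,j}c_{m,m}=c_{1,m}c_{m,j}.
\]
Dividing the first identity by the product of the other two gives $u_{i,j}=c_{m,m}/(c_{m,1}c_{1,m})=u_{m,m}$.

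Now define the weights $w_{0,1}:=u_{1,1}$ and $w_{p,k}:=u_{k,k}-u_{p,p}$ for each edge from a vertex $k$ to its parent $p$. The sum of the weights along $P_{m,0}$ telescopes to $u_{m,m}$. The common edges of $P_{i,0}$ and $P_{j,0}$ are exactly those of $P_{m,0}$, so (\ref{dis}) holds and $U$ is of tree structure.

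\emph{Converse.} Let $U$ be nonsingular and of tree structure. By Theorem \ref{theo:nonzero-w} all weights are nonzero, and $U$ is symmetric by (\ref{dis}). Since $f_id_i\neq0$, we have $(T\circ U\circ R)^{-1}=F^{-1}U^{-1}D^{-1}$, which has the same zero pattern as $U^{-1}$. So it suffices to show that $G(U^{-1})=\Gamma$ and that $U^{-1}\xi_n=\gamma\hat e_n=(\xi_n^TU^{-1})^T$; this is how I read (\ref{rowsum}).

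To do this, build the candidate inverse $A$ with $G(A)=\Gamma$. Set $a_{i,j}=-1/w_{i,j}$ on the edges of $\Gamma$. Choose the diagonal so that every row sum vanishes except the first, which equals $\gamma:=1/w_{0,1}$. Then verify $AU=I$ directly.

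\emph{Main obstacle.} The verification of $AU=I$ is where I expect the only real work. The check is row by row: row $i$ of $A$ involves only $i$ and its neighbours in $\Gamma$. Column $j$ of $U$ is constant on each branch hanging off the path $P_{j,0}$. Write $(AU)_{i,j}=\sum_k a_{i,k}(u_{k,j}-u_{i,j})$ plus a row-sum term. For $i\ne j$, at most one neighbour $k$ gives $u_{k,j}\ne u_{i,j}$, namely the neighbour towards $j$ when $i$ lies on $P_{j,0}$, with difference $\pm w_{i,k}$; this should cancel against the row-sum term. For $i=j$, the same computation should produce $1$.

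Alternatively, this step can be shortcut by invoking Theorem \ref{iff-2} (1)$\Rightarrow$(2) for the constructed $A$, once $A$ is known to be nonsingular. The conclusion is that $A^{-1}$ is of tree structure with the same weights, hence equals $U$. Either route identifies $U^{-1}=A$, which is irreducible, acyclic with graph $\Gamma$, and has the stated row and column sums.
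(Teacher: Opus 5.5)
Your argument is correct in its overall design, and its first half takes a different route from the one the paper relies on; the converse, however, is not finished as written (second paragraph).

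\textbf{Comparison of routes.} The paper does not reprove this theorem; it quotes it from \cite{Nab01}. In the paper's framework the short proof is a scaling followed by Theorem~\ref{iff-2}. With $C=A^{-1}$ and $D,F$ as in the statement, the matrix $U:=D^{-1}CF^{-1}$ has constant first row and column equal to $1/c_{1,1}$. Hence $U^{-1}=FAD$ has graph $\Gamma$ and satisfies condition (1) of Theorem~\ref{iff-2}, and that theorem gives that $U$ is symmetric and of tree structure. The identity $T\circ U\circ R=DUF$ is (\ref{eq:prod}), which is your rank-one observation $(T\circ R)_{i,j}=d_if_j$. The converse is Theorem~\ref{iff-2} read from (2) to (1) for $U^{-1}$.

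Your first half avoids Theorem~\ref{iff-2} and works directly from Klein's treeangle property (Theorem~\ref{kl2}) via the meet vertex $m$. This is rigorous: all divisions are legitimate by Proposition~\ref{Nab01}. It gives symmetry of $U$ for free and produces the weights explicitly, $w_{0,1}=1/c_{1,1}$ and $w_{p,k}=u_{k,k}-u_{p,p}$. These weights are nonzero, if your definition requires it, by Theorem~\ref{theo:nonzero-w}, since $U$ is nonsingular. In the converse you are in effect reproving (2)$\Rightarrow$(1) of Theorem~\ref{iff-2}. Your reading of (\ref{rowsum}) is the right one. As printed, $U\xi_n=\gamma\hat e_n$ is false in general. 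What holds is $U\hat e_n=w_{0,1}\xi_n$, i.e.\ $U^{-1}\xi_n=\hat e_n/w_{0,1}$, and likewise for rows.

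\textbf{The gap in the converse.} The one computation you defer is described incorrectly. It is not true that at most one neighbour $k$ of $i$ has $u_{k,j}\neq u_{i,j}$. Also, the row-sum term $\gamma\delta_{i,1}u_{i,j}$ vanishes outside row $1$, so it cannot be what cancels in general. Root the tree at $1$ and use $a_{i,k}w_{i,k}=-1$:
\begin{enumerate}
\item[(i)] For a child $k$ of $i$, $u_{k,j}-u_{i,j}=w_{i,k}$ if $j$ lies in the branch at $k$, and $0$ otherwise.
\item[(ii)] For the parent $p$ of $i$, $u_{p,j}-u_{i,j}=-w_{p,i}$ if $j$ lies in the branch at $i$, and $0$ otherwise.
\end{enumerate}
This gives three cases:
\begin{enumerate}
\item[(a)] If $j$ is outside the branch at $i$, every term vanishes, and $i\neq 1$.
\item[(b)] If $j\neq i$ lies inside the branch at $i$, the child towards $j$ contributes $-1$. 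This is cancelled by the parent's $+1$ when $i\neq1$, and by $\gamma u_{1,j}=\gamma w_{0,1}=1$ when $i=1$.
\item[(c)] If $j=i$, only the parent contributes, giving $+1$; for $i=1$ the row-sum term gives $\gamma u_{1,1}=1$.
\end{enumerate}
So for $i\neq 1$ two neighbours contribute and cancel each other. With this bookkeeping $AU=I$ does hold.

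Your proposed shortcut is circular as stated. Direction (1)$\Rightarrow$(2) of Theorem~\ref{iff-2} needs $A$ nonsingular, which you only know once $AU=I$ is established. Instead, apply the theorem in the direction (2)$\Rightarrow$(1) to the matrix $U^{-1}$, which exists by hypothesis.

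A cleaner alternative is the following. Write $U=QWQ^T$, where $Q_{v,e}=1$ if $e\in P_{v,0}$ and $Q_{v,e}=0$ otherwise, and $W$ is the diagonal matrix of the edge weights. Then $Q^{-1}=K$, the edge--vertex incidence matrix of $\Gamma_{(0)}$ with the root column deleted; in dfs order it is unit lower triangular. Hence $U^{-1}=K^TW^{-1}K$, which is exactly your $A$.
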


Note again that here $\Gamma_{(0)} $ is obtained from $\Gamma$ by adding the root $0$ and a 
new edge $e_{0,1}$. Moreover, note that 
it also holds
\beq \label{eq:prod}
A^{-1} = D*U*F.
\eeq

For tridiagonal matrices the above result leads to  the classical results given in  \cite{GK1, GK2, I}, see also \cite{Nab01}. 

In the symmetric case the sequences $\{f_i\}$ and $\{d_i\}$ are the same. Then  the matrix $T$ is called a type D matrix and the matrix $R$ is called a flipped type D matrix, see \cite{Mar72, McDNNST98}. Note that originally (\cite{Mar72, McDNNST98})  in the definition of these matrices the sequence of the diagonal entries must be decreasing or increasing. Here, as in \cite{Nab97},  we omit this condition.

In \cite{PraS26} inverse $M$-, inverse $N_0$- and inverse $F_0$-  matrices are characterized whose graph is  a tree (or are acyclic). 
However, these characterization  are not that simple  to check and do not lead to a simple way to construct these matrices. In Section 3  we  give simple  characterizations of all inverse $L_s$-matrices whose graphs are trees. To  do so we use matrices of tree structure.

\subsection{Generalized ultrametric matrices and shifted  generalized ultrametric matrices}

In \cite{MarMS94}  the class of strictly ultrametric matrices is  introduced. It  is then shown in \cite{MarMS94} and \cite{NabV94} that their inverses are diagonally dominant $M$-matrices. A more general definition leads to ultrametric matrices.

\begin{definition}\label{defgultra}
A symmetric matrix $A  \in \bRn$ is an ultrametric matrix  if
\begin{enumerate}
\item[i)]  $A$ has nonnegative entries;
\item[ii)]  $a_{i,j} \geq min\{a_{i,k};a_{k,j}\}$ for all $i,k,j \in N;$ 
\item[iii)] $a_{i,i} \geq max \{a_{i,j}; 
a_{j,i}\}$ for all $i,j \in N$.
\end{enumerate}
A matrix $A$ is said to be a {\it strictly ultrametric  } if strict inequality holds in
 iii) for all $i \neq j$, where, if $n = 1$, this is interpreted as $a_{1,1} > 
0$.\end{definition} 

In \cite{McDNST95}  and
\cite{NabV95a} nonsymmetric generalizations of ultrametric matrices are introduced.

\begin{definition}\label{defgenultra}
A matrix $A  \in \bRn$ is a \gum\  if
\begin{enumerate}
\item[i)]  $A$ has nonnegative entries;
\item[ii)]  $a_{i,j} \geq min\{a_{i,k};a_{k,j}\}$ for all $i,k,j \in N;$ 
\item[iii)] each triple $\{q,s,t\} \subseteq N^3$ can be
      reordered as a triple $\{i,j,k\}$  such that
   \begin{enumerate} 
     \item[$1)$]  $a_{j,k} = a_{i,k}$  and  $ a_{k,j} = a_{k,i}$;
     \item[$2)$] $max\{a_{i,j};a_{j,i}\} \geq max\{a_{i,k};a_{k,i}\}$; 
    \end{enumerate}
\item[iv)] $a_{i,i} \geq max \{a_{i,j}; 
a_{j,i}\}$ for all $i,j \in N$.
\end{enumerate}
A matrix $A$ is said to be a {\it strictly \gum\ } if strict inequality holds in
 iv) for all $i \neq j$ in $N$, where, if $n = 1$, this is interpreted as $a_{1,1} > 
0$.\end{definition} 
It is then shown in 
\cite{NabV95a} and \cite{McDNST95}  that the inverses of nonsingular generalized ultrametric matrices are diagonally dominant $M$-matrices. 

As shown in \cite{NabV95a} \gums \  and ultrametric matrices  can be represented with a (edge) weighted rooted tree with $m$ leaves different from the root. The  edges of the tree have two weights (say left ones) $l_{i,j}$ and (right ones) $w_{i,j}$.

\begin{theorem}
A matrix $A = [a_{i,j}] \in \bR ^{m,m}$ is a generalized ultrametric matrix if and only if there  exits a weighted rooted tree $\Gamma_{(0)}$ with $m$ leaves (different from the root) and vertex set $\{0,1,\ldots, m\}$, where $0$ denotes the root.
Each edges of the tree has two nonnegative weights (say left ones) $l_{i,j}$ and (right ones) $w_{i,j}$  such that for all $i,j = 1,\ldots,m$
\beq \label{disgen}
a_{i,j} & = & \sum_{\{r,s\} \in P_{i,0} \cap P_{j,0}} w_{r,s} \quad \mbox{for} \quad i < j \\
a_{i,j} & = & \sum_{\{r,s\} \in P_{i,0} \cap P_{j,0}} l_{r,s} \quad \mbox{for} \quad i > j \\
a_{i,i} & \geq  & \max \{ \sum_{\{r,s\} \in P_{i,0}} l_{r,s}, \sum_{\{r,s\} \in P_{i,0}} w_{r,s} \},
\eeq
here $\{r,s\} \in P_{i,0} \cap P_{j,0}$ denotes a common edge of the paths 
$P_{i,0}$ and  $P_{j,0}$.
\end{theorem}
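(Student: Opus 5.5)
We prove the two implications separately, by strong induction on $m$. The main tool is a structural decomposition of a generalized ultrametric matrix into two diagonal blocks, and we build the rooted tree from that decomposition one step at a time. Throughout, the leaves of the tree correspond to the indices $1,\ldots,m$. We read the diagonal condition as allowing $a_{i,i}$ to exceed the path sums; this excess is the extra freedom that a strictly generalized ultrametric matrix has.

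\emph{Tree $\Rightarrow$ generalized ultrametric.} Let $A$ be given by the weighted tree. Property i) holds because all weights are nonnegative. Property iv) holds because $P_{i,0}\cap P_{j,0}\subseteq P_{i,0}$, so every off-diagonal entry in row or column $i$ is at most the full path sum, and the full path sum is at most $a_{i,i}$ by the third relation. For ii) and iii), fix three leaves $q,s,t$. Their paths to the root meet pairwise in common initial segments from the root. Two of these segments coincide, and the third is at least as long; call the two leaves whose paths branch off last $i$ and $j$, and call the third leaf $k$. Then $P_{j,0}\cap P_{k,0}=P_{i,0}\cap P_{k,0}$, which gives 1) for both the $w$-sums and the $l$-sums. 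Also $P_{i,0}\cap P_{j,0}\supseteq P_{i,0}\cap P_{k,0}$, and since the weights are nonnegative this gives 2) and the inequality $a_{i,j}\ge\min\{a_{i,k},a_{k,j}\}$. When the ordering $i<j$ versus $i>j$ switches between $w$ and $l$, we check 1) separately for the $w$-part and the $l$-part. This is routine but needs care.

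\emph{Generalized ultrametric $\Rightarrow$ tree.} This direction carries the real content. First, let $\alpha=\min_{i\neq j}\min\{a_{i,j},a_{j,i}\}$, the smallest off-diagonal entry. We show that $N$ splits into two nonempty sets $N_1,N_2$ such that every entry with row index in $N_1$ and column index in $N_2$ equals one constant $\alpha_{12}$, and every entry in the opposite block equals a constant $\alpha_{21}$. For the case $\max\{\alpha_{12},\alpha_{21}\}$ where $\alpha$ is attained, we use the reorder property iii). This is the standard lemma from \cite{NabV95a,McDNST95}, and I expect it to be the main obstacle. To get it, define the relation $i\sim j$ if and only if $\min\{a_{i,j},a_{j,i}\}>\alpha$, or an analogous threshold on $\max$. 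Using ii) and iii), show that this relation is an equivalence relation with at least two classes. Then use 1) to show that the entries between classes are constant, grouping the classes into two sets.

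Once the split is in hand, we may permute so that $N_1$ precedes $N_2$; this is compatible with the $i<j$ convention for $w$ and $l$. The principal submatrices $A[N_1]$ and $A[N_2]$ are again generalized ultrametric. By the induction hypothesis each has a tree. Subtract $\alpha_{12}$ from the $w$-weights and $\alpha_{21}$ from the $l$-weights of the root edge of each subtree; these stay nonnegative because every entry of $A[N_r]$ is at least the corresponding block constant. We justify this using ii) with a third index in the other block. Finally, join the two subtrees under a new root edge with weights $w=\alpha_{12}$ and $l=\alpha_{21}$. The base case $m=1$ is a single edge with weights $0$ or $a_{1,1}$, and the diagonal inequality absorbs the slack.
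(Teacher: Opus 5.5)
The step you call routine in the direction from tree to generalized ultrametric matrix fails, and with it the statement as printed. If the odd leaf $k$ carries a label between $i$ and $j$, then $a_{i,k}$ is a $w$-sum and $a_{j,k}$ is an $l$-sum over the same common path, and 1) would force them to be equal.

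For a concrete counterexample, let the root edge carry $(w,l)=(1,2)$ and let leaf $2$ branch off directly below it. Let leaves $1$ and $3$ hang below a further edge with $(w,l)=(2,1)$, and take zero leaf-edge weights and $a_{i,i}=5$. This gives
\[
A=\left[\begin{array}{rrr} 5&1&3\\ 2&5&1\\ 3&2&5 \end{array}\right],
\]
which violates ii), since $a_{1,2}=1<\min\{a_{1,3},a_{3,2}\}=2$, and iii), since every choice of $k$ breaks 1).

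The converse fails in the same way. $B=\left[\begin{array}{rrr} 7&1&1\\ 5&7&6\\ 5&2&7 \end{array}\right]$ is a generalized ultrametric matrix that no tree represents in this numbering. The pair $\{2,3\}$ must branch last, and then the $l$-sum $b_{3,2}=2$ would have to dominate $b_{2,1}=5$.

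So the theorem can only hold up to permutation similarity. For the easy direction, number the leaves so that every branch occupies consecutive labels. Then the odd leaf never lies between $i$ and $j$ (when all three branch at one vertex, take $k$ to be the smallest or largest label), and your checks of 1), 2) and ii) go through. For the hard direction you must produce such a numbering, with every split oriented so that the smaller cross constant lies above the diagonal.

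In the hard direction your nonnegativity argument is also wrong, for three reasons.

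\begin{itemize}
\item Using ii) with a third index in the other block only gives $a_{i,j}\ge\min\{\alpha_{12},\alpha_{21}\}$ for $i,j\in N_r$. Entries of $A[N_r]$ can lie below the larger cross constant; in $B$ above, $b_{3,2}=2<5$. The ingredient you need is iii)2). With $\alpha_{12}=\tau(A)\le\alpha_{21}=\omega(A)$, take $k$ in one block and $i,j$ in the other. Every admissible choice of the odd index in $\{i,j,k\}$ forces $\max\{a_{i,j},a_{j,i}\}\ge\omega(A)$, and hence $\omega(A[N_r])\ge\omega(A)$, which is (\ref{eqomega}).
\item Your induction hypothesis must be strengthened: the subtree for $A[N_r]$ is numbered by the same rule, and its root edge carries exactly $(\tau(A[N_r]),\omega(A[N_r]))$. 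A singleton block should carry $(a_{i,i},a_{i,i})$, not $0$. An arbitrary representing tree need not put enough weight on its root edge, so subtracting could go negative.
\item The grouping of your classes into $N_1,N_2$ is left open. Your relation $\min\{a_{i,j},a_{j,i}\}>\alpha$ is an equivalence by ii), and iii)1) makes the blocks between classes constant. However, these constants are pairs $(\alpha,\gamma)$ with varying $\gamma$, so one class versus the rest need not work. A direct fix: pick $p,q$ with $a_{p,q}=\tau(A)$ and $a_{q,p}=\omega(A)$, and set $N_2=\{k\neq p:\ a_{p,k}=\tau(A),\ a_{k,p}=\omega(A)\}$. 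For $i\in N_1$ and $j\in N_2$, iii) on $\{p,i,j\}$, together with ii) and the minimality defining $\omega(A)$, leaves only $a_{i,j}=\tau(A)$ and $a_{j,i}=\omega(A)$.
\end{itemize}

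The paper gives no proof of this statement and refers to \cite{NabV95a}. Equivalently, you can take the split, the recursive ordering and (\ref{eqomega}) from Theorem \ref{nested}. Then read off the tree: root-edge weights $(\tau(A),\omega(A))$, and increments $\tau(A_{r,r})-\tau(A)$ and $\omega(A_{r,r})-\omega(A)$ on the edges into the blocks.
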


Note in the theorem above we consider the leafs of the tree.  But this construction can be easily generalized for all vertices (leafs and interior vertices). Formally, for every interior vertex we add a new leaf and a new edge with zero weights which connects both. Thus we can easily obtain a rooted tree with $n$ leafs from $\Gamma_{(0)}$, (where $n$ is the number of all vertices in $\Gamma$). 

Hence, we can see a close relation between symmetric ultrametric matrices and nonnegative matrices of tree structure  as mentioned above. Moreover, symmetric shifted ultrametric matrices are matrices of tree structure. But matrices of tree  structure need not to be shifted ultrametric. This depends on the weights of the tree.

Generalized ultrametric matrices have a special nested block structure. To describe this structure we  need  the following notation.  For  $A = [\aij] \in \bRn, $ we set

\beq \label{defentries}
\left\{ \begin{array}{cll}
\tau (A) & := & min \{\aij : i,j \in N\}, \\ 
\omega (A) & := &  min \{\aji : \aij = \tau (A)\}, \\
\tom & := & \omega (A) - \tau (A), \ \mbox{ so that } \ \tom \geq 0. 
\end{array} \right.
\eeq

We see that if $A$ is a \gum\  in $\bRn$, then the matrix $B := A + 
\tau\xi_n\xi_n^T$ 
satisfies ii), iii), and iv) of Definition \ref{defgenultra} for {\it any} real 
$\tau$. This observation gives rise to the next class of matrices. (Definition 2.2 in \cite{NabV95b}).

\begin{definition}\label{defessgen}
A matrix $A = [a_{i,j}] \in \bRn$ is a \sgum\  if there is a real number $\tau$ 
such that $A + \tau\xi_n\xi_n^T$  is a \gum. If strict 
inequality holds in iv)  of Definition $\ref{defgenultra}$ for all $i \in N$, 
then $A$ is a strictly \sgum. In addition, if $A = [a_{i,j}] \in \bRn$ is a 
\sgum\  which is not a \gum, so that $\tau(A)<0$, then $A$ is said to be of 
type $U^{(-1)}_{p,n-p}$ if
\begin{enumerate}
\item[v)]  
           $a_{i,j} \leq 0$ for all $i,j \in N$, with $i \neq j$; 
\item[vi)] $p$ is the largest positive integer such that there exists a $p 
\times p$ principal submatrix of $A$ which is a nonsingular M--matrix. If no 
such positive integer exists, then $p := 0$.
\end{enumerate}
\end{definition}

The next remark compares shifted  generalized ultrametric matrices and matrices of tree structure.

\begin{remark}
 The classes of shifted generalized ultrametric matrices and  matrices of tree structure share many nice properties. But the classes are different - but of course with a huge overlap. We can always add the identity to a shifted generalized ultrametric matrix and the result is still of this type. But for matrices of tree structure this does not work. 
 So for matrices of tree  structure we always have off-diagonal entries which are the same as diagonal entries. On the other hand 
 if we have positive and negative weights for interior vertices in the tree, the matrix of tree structure is not a shifted generalized ultrametric matrix. For ultrametric matrices the entries 'grow in the direction to the diagonal'. 
\end{remark}

Throughout this paper, we use the following theorems which are all proven in  \cite{NabV95a} or \cite{NabV95b}. We start with the  Theorems 
3.7 and 3.8 of \cite{NabV95b}.

\begin{theorem} \label{theopreultra}
Let $A = [\aij] \in \bRn $ be a strictly \gum. Then, $A$ is nonsingular and its 
inverse $A^{-1} \in \bRn$ is a strictly row and strictly column diagonally 
dominant M--matrix, with the property that
\beq \label{omegaabst}
\omega (A) \xi^T_nA^{-1}\xi_n < 1.
\eeq
If $A$ is a nonsingular \gum,  its inverse $A^{-1} \in \bRn$ is a row and column
 diagonally dominant M--matrix, with the  
property that
\beq \label{omegaabw}
\omega (A) \xi^T_nA^{-1}\xi_n \leq 1.
\eeq
\end{theorem}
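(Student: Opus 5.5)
The plan is to prove the strict case by induction on $n$ and then obtain the non-strict case by a limiting argument. The induction rests on the nested block structure of \gums. By the tree representation of \gums\ stated above, splitting the leaves according to the subtrees hanging below the root (after passing, if necessary, to the first vertex where the tree branches) gives the following. After a permutation similarity $P A P^T$, and possibly after transposing $A$, we may write
\[
A = \left[ \begin{array}{cc} A_1 & \omega\,\xi_k\xi_{n-k}^T \\ \tau\,\xi_{n-k}\xi_k^T & A_2 \end{array}\right], \qquad \tau=\tau(A),\ \omega=\omega(A).
\]
Here $A_1$ and $A_2$ are strictly \gums\ all of whose entries are $\ge \omega$.

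Justifying this decomposition is the step I expect to be the main obstacle. The delicate point is the case where the root has only one child. There one first subtracts the common value of all entries, i.e.\ one uses Definition~\ref{defessgen}, and descends to the first branching vertex. One must also track which off-diagonal block carries $\tau$ and which carries $\omega$; this is exactly the content of condition iii) of Definition~\ref{defgenultra}. Permutation similarity and transposition preserve every claim of the theorem (the M-matrix property, row and column dominance, and $\xi^TA^{-1}\xi$), so these normalizations cost nothing.

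Put $\delta=\omega-\tau\ge 0$ and $B_i=A_i-\tau E$. Then
\[
A=\tau\,\xi_n\xi_n^T+\tilde A, \qquad \tilde A=\left[ \begin{array}{cc} B_1 & \delta\,\xi\xi^T \\ 0 & B_2 \end{array}\right].
\]
Each $B_i$ is again a strictly \gum\ (shifting by $-\tau$ keeps it nonnegative, since its entries are $\ge\omega\ge\tau$). Moreover $\omega(B_i)\ge\tau(B_i)\ge\delta$.

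By induction, each $B_i^{-1}$ is a strictly diagonally dominant M-matrix with $r_i:=B_i^{-1}\xi>0$, $\xi^TB_i^{-1}>0$, and $s_i:=\xi^TB_i^{-1}\xi$ satisfying $\delta s_i\le\omega(B_i)s_i<1$. Then $\tilde A^{-1}$ has diagonal blocks $B_i^{-1}$ and off-diagonal block $-\delta\, r_1\,\xi^TB_2^{-1}\le 0$, so $\tilde A^{-1}$ is a Z-matrix. Its row sums are
\[
\tilde A^{-1}\xi = \left[ \begin{array}{c} (1-\delta s_2)\,r_1 \\ r_2 \end{array}\right] > 0,
\]
and the column sums are positive by the analogous computation. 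Finally
\[
s:=\xi^T\tilde A^{-1}\xi=s_1+s_2-\delta s_1s_2 .
\]

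Next apply Sherman--Morrison to the rank-one term $\tau\xi\xi^T$, with $\tau\ge 0$:
\[
A^{-1}=\tilde A^{-1}-\frac{\tau\,\tilde A^{-1}\xi\,\xi^T\tilde A^{-1}}{1+\tau s}.
\]
The denominator is positive since $s>0$, and the subtracted matrix is entrywise nonnegative. Hence $A^{-1}$ is a Z-matrix. Its row sums are $A^{-1}\xi=\tilde A^{-1}\xi/(1+\tau s)>0$, and similarly its column sums are positive. A Z-matrix with positive row sums has positive diagonal and is strictly row diagonally dominant, hence it is a nonsingular M-matrix; the same argument gives strict column diagonal dominance.

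For (\ref{omegaabst}) we have $\omega\,\xi^TA^{-1}\xi=\omega s/(1+\tau s)$. This is $<1$ if and only if $\delta s<1$, that is,
\[
\delta(s_1+s_2)-\delta^2s_1s_2<1 \iff (1-\delta s_1)(1-\delta s_2)>0,
\]
which holds by the induction hypothesis. The base case $n=1$ is the trivial one: $A^{-1}=1/a_{11}$ and $\omega(A)=a_{11}$.

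For a nonsingular \gum\ $A$, the matrix $A+\varepsilon I$ is strictly generalized ultrametric for every $\varepsilon>0$, and $\omega(A+\varepsilon I)\to\omega(A)$ (with equality for $n\ge 2$). Letting $\varepsilon\to 0$ and using continuity of the inverse turns all the strict inequalities into the non-strict ones, which gives (\ref{omegaabw}).
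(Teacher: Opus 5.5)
Your architecture is the standard Nabben--Varga route: nested block form, block-triangular inverse of $\tilde A$, Sherman--Morrison for $\tau\xi\xi^T$, and $1-\delta s=(1-\delta s_1)(1-\delta s_2)$. The paper itself only cites this result from \cite{NabV95b}. However, two steps fail as written.

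\textbf{First gap: entries of $A_1,A_2$.} Your claim that all entries of $A_1,A_2$ are $\ge\omega$ is false. Consider
\[
A=\left[\begin{array}{ccc} 2&0&0\\ 1&2&1\\ 1&0&2 \end{array}\right].
\]
It is a strictly generalized ultrametric matrix with $\tau(A)=0$ and $\omega(A)=1$. Its only admissible splittings are $\{1\}\mid\{2,3\}$ and $\{2\}\mid\{1,3\}$, and in both the $2\times 2$ diagonal block contains a $0$. So $\tau(B_i)\ge\delta$ is wrong in general.

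The inequality you actually use, $\omega(B_i)=\omega(A_i)-\tau\ge\delta$, is true, but for a different reason. It is (\ref{eqomega}) of Theorem \ref{nested}. Directly: if $p,q$ lie in one block with $a_{pq}<\omega$ and $i$ lies in the other, condition iii) of Definition \ref{defgenultra} applied to $\{i,p,q\}$ forces $a_{qp}\ge\omega$. Nonnegativity of $B_i$ only needs entries $\ge\tau$, which is automatic.

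\textbf{Second gap: the base case.} Your base case does not give what the induction needs. For $n=1$, $\omega(A)\xi^TA^{-1}\xi=a_{11}/a_{11}=1$, so (\ref{omegaabst}) is false for $n=1$. Consequently the hypothesis $\omega(B_i)s_i<1$ fails for every $1\times1$ block, which always happens when $n=2$. In the example above, $B=[2]$ gives $\omega(B)s=1$. With only $\delta s_i\le 1$ you get $\tilde A^{-1}\xi\ge 0$ and $1-\delta s\ge 0$, i.e.\ none of the strict conclusions.

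The fix is to induct on ``$\omega(A)\xi^TA^{-1}\xi\le 1$, with strict inequality for $n\ge 2$''. Treat a $1\times 1$ block $[a_{pp}-\tau]$ separately. Row or column $p$ contains an off-diagonal entry equal to $\omega$, so strictness in iv) gives $a_{pp}>\omega$. Hence $\delta s_i=(\omega-\tau)/(a_{pp}-\tau)<1$.

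With these two repairs the rest of your argument goes through, including the $\varepsilon\to 0$ limit for the non-strict case.
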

The next Theorem is  Theorem 2.3 in \cite{NabV95b}.

\begin{theorem} \label{nested}
Let $A = [\aij] \in \bRn, n > 1, $ be a \sgum. 
 Then, there exist a permutation matrix $P \in \bRn$ and a positive integer
$r$, with $1 \leq r < n$, such that
\beq  \label{eqrep}
 \mbox{~~~~~~~} PAP^T & = & 
\left[ \begin{array}{cc}
A_{1,1} & A_{1,2} \\
A_{2,1} & A_{2,2}
\end {array} \right] =
\left[ \begin{array}{cc}
C & O \\
\tom \xi _{n-r}\xi ^T_r & D 
\end {array} \right] + \tau (A)\xi_n \xi _n^T \\  \nonumber
& =:  & M + \tau (A)\xi_n \xi _n^T,
\eeq
where $A_{1,1}$ $ \in \bR ^{r,r} $  and $A_{2,2} \in \bR^{n-r,n-r}$ 
are \sgums. Moreover, the matrices $A_{1,1}$ and $A_{2,2}$ are reduced in the 
same way as $A$. The off--diagonal blocks are $A_{1,2} = \tau 
(A)\xi_r\xi^T_{n-r}$ and $A_{2,1} = \omega (A)\xi_{n-r}\xi^T_{r}$. \\
 

The matrices $M \in \bRn,\ C \in \bR ^{r,r} $ and $D \in \bR^{n-r,n-r}$
  are generalized ultrametric matrices and $\tom = \omega (A) - \tau (A) 
\geq 0.$  
Moreover,
\beq \label{eqomega}
\omega (A_{1,1}) \geq \omega (A),\quad  {\rm and} \quad \omega (A_{2,2}) \geq 
\omega (A).
\eeq
If $A$ is a strictly \sgum, then $M$ is a strictly generalized ultrametric 
matrix.
\end{theorem}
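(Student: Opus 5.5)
The plan is to reduce to the unshifted case and then read the block decomposition off the weighted rooted tree that represents a generalized ultrametric matrix.

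\emph{Reduction.} Conditions ii), iii) and iv) of Definition \ref{defgenultra} are invariant under adding a multiple of $\xi_n\xi_n^T$. Hence, if $A+\tau\xi_n\xi_n^T$ is a \gum\ for some $\tau$, then $M:=A-\tau(A)\xi_n\xi_n^T$ satisfies ii)--iv). It is also nonnegative, because its minimal entry is $0$. So $M$ is a \gum\ with $\tau(M)=0$. The same shift shows that $\omega(M)=\omega(A)-\tau(A)=\tom$, and that $M$ is strict whenever $A$ is. It therefore suffices to find a permutation $P$ and an index $r$ with $1\le r<n$ such that $PMP^T$ has upper right block $O$ and lower left block $\tom\,\xi_{n-r}\xi_r^T$, and to check that the diagonal blocks are again \gums.

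\emph{Tree argument.} Represent $M$ by the weighted rooted tree of the representation theorem above, with nonnegative left weights $l_{r,s}$ and right weights $w_{r,s}$. If needed, interior vertices are turned into leaves through zero-weight edges, and the leaves are numbered by dfs. Walk down from the root to the first vertex $v$ with at least two children. Such a vertex exists because $n>1$, and the leaves split into the dfs-consecutive subtrees $S_1,\dots,S_k$ below $v$. For leaves $i<j$ lying in different subtrees, $m_{i,j}=c_w$ and $m_{j,i}=c_l$, where $c_w$ and $c_l$ are the sums of the right and left weights along $P_{v,0}$. Every other off-diagonal entry is a sum over a longer common path, so it is at least $c_w$ if it lies above the diagonal and at least $c_l$ if it lies below. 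Diagonal entries are at least the larger of the two full-path sums by the third relation of the representation theorem. Since $\tau(M)=0$, this gives $\min\{c_w,c_l\}=0$.

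\emph{Choice of blocks.} Take the first block to be $S_1$ and the second to be $S_2\cup\dots\cup S_k$. If $c_w=0$, no permutation is needed. If instead $c_l=0<c_w$, apply the permutation that moves the second group in front of $S_1$, which turns the zero block into the upper right one. In either case the lower left block is the constant $\max\{c_w,c_l\}$, and this constant equals $\omega(M)=\tom$ by the definition of $\omega$. The diagonal blocks are principal submatrices of $M$, and conditions i)--iv) are inherited by principal submatrices. Hence $C$ and $D$ are \gums, strict if $M$ is, and $A_{1,1}$, $A_{2,2}$ are \sgums. Applying the same argument to them gives the recursive reduction.

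\emph{Main obstacle.} The step I expect to be delicate is \eqref{eqomega}, because $\omega$ is determined by an entry $a_{i,j}=\tau$ through its transposed partner, and that partner need not be the larger of the pair. My first approach is to use condition iii) on triples $\{i,j,k\}$ with $i,j$ in one block and $k$ in the other. Since $m_{i,k}=m_{j,k}=0$ and $m_{k,i}=m_{k,j}=\tom$, the only admissible reordering makes $k$ the distinguished index. Condition iii) 2) then yields $\max\{m_{i,j},m_{j,i}\}\ge\tom$. From this I would argue that the entry transposed to a minimal entry of $C$ (or $D$) is at least $\tom$, using the tree inequalities above to rule out the case where both $m_{i,j}$ and $m_{j,i}$ are small. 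If this argument has a gap, I would instead compare path sums inside the subtree directly, since every common path there contains $P_{v,0}$. Undoing the shift by $\tau(A)$ then gives $\omega(A_{1,1})\ge\omega(A)$ and $\omega(A_{2,2})\ge\omega(A)$.
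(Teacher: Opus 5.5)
The paper gives no argument of its own for this statement; it quotes Theorem~2.3 of \cite{NabV95b}. Your proof is correct relative to the tree representation theorem, but it takes a different route from a direct proof.

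Your reduction to $M=A-\tau(A)\xi_n\xi_n^T$ is the natural first step and is correct, including $\tau(M)=0$, $\omega(M)=\delta(A)$ and the transfer of strictness. After that, the real content is the splitting of a generalized ultrametric matrix. A direct proof derives that split from conditions ii) and iii); you read it off the first branching vertex $v$ of the tree representation.

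Your route makes three things transparent as monotonicity of nonnegative path sums: the zero block, the constant block $\max\{c_w,c_l\}=\omega(M)$, and (\ref{eqomega}). The price is that the tree representation is essentially the iterated form of the decomposition you are proving. If you build the tree from the splitting, putting the $\tau$-block above the diagonal at each level, the inner left weights are $\omega(C)-\omega(M)$, and their nonnegativity is exactly (\ref{eqomega}). So for $\tau(A)=0$ you derive the result from a cited theorem of essentially the same strength, whereas the direct route is self-contained.

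\textbf{Relabelling.} You cannot take the tree for the given labelling and then renumber the leaves by dfs. Renumbering changes which pairs have $i<j$, and so breaks the rule that right weights are used above the diagonal and left weights below it. Read with the labels fixed, the printed representation theorem is in fact false: $\left[\begin{array}{ccc}3&0&1\\1&3&1\\2&0&3\end{array}\right]$ is a generalized ultrametric matrix with no such tree in its own labelling. What holds is a representation of $QMQ^T$ with leaves in dfs order, for a suitable permutation $Q$. Your $P$ is therefore $Q$, possibly followed by the block swap, not the identity.

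\textbf{The triple argument.} Your triple argument for (\ref{eqomega}) is already complete without the tree. It is not true that $k$ is the only admissible distinguished index: if $m_{i,j}=0$ and $m_{j,i}=\delta(A)$, then $i$ qualifies as well. In that case, however, $\max\{m_{i,j},m_{j,i}\}=\delta(A)$ directly, so $\max\{m_{i,j},m_{j,i}\}\ge\delta(A)$ holds for all $i,j$ in one block. To finish:
\begin{itemize}
\item If $m_{i,j}=\tau(C)$ with $i\neq j$, the partner $m_{j,i}\ge m_{i,j}$ is the larger entry of the pair, hence at least $\delta(A)$.
\item A diagonal minimum is handled by iv): $m_{i,i}\ge\max\{m_{i,k},m_{k,i}\}=\delta(A)$ for $k$ in the other block.
\end{itemize}
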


The next Lemma is Lemma 2.4 in \cite{NabV95b}. 

\begin{lemma}\label{strict}
Let $A \in \bRn, n > 1, $ be a \sgum\ given in the block form $(\ref{eqrep})$.\\
i) If $A$ is a nonsingular strictly \sgum\ of type $U^{(-1)}_{p,n-p}$ for some 
$p$ with $0 \leq p < n,$ then
\beq \label{abschatz}
A^{-1}\xi_n < 0 \quad \mbox{ and } \quad \omega (A) \xi_n^TA^{-1}\xi_n > 1.
\eeq 
Similarly, if $A$ is a nonsingular \sgum\ of type $U^{(-1)}_{p,n-p}$ for some 
$p$ with $0 \leq p < n,$ then
\beq \label{abschatzweak}
A^{-1}\xi_n \leq 0 \quad \mbox{ and } \quad \omega (A) \xi_n^TA^{-1}\xi_n \geq 
1.
\eeq  
ii)
If $A_{1,1}$ in $(\ref{eqrep})$ is nonsingular and of type $U^{(-1)}_{p,r-p}$ 
for some $p$ with $0 \leq p < r,$
 then the Schur complement $A/A_{1,1}$ is a generalized ultrametric matrix. If 
$A_{1,1}$ is a nonsingular M--matrix, i.e., of type $U^{(-1)}_{r,0}$,
and if $A_{2,2} \in U^{(-1)}_{q, n-r-q}$ with $\ 0 \leq q \leq n-r$, then 
$A/A_{1,1}$ is of type $U^{(-1)}_{\tilde p, n-r-\tilde p}$ for a $\tilde p$ with
 $0 \leq \tilde p \leq q$.
The same holds for $A/A_{2,2}$.
\end{lemma}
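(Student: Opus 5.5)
The plan is to prove both parts by writing $A$ as a rank-one perturbation of a generalized ultrametric matrix and applying the Sherman--Morrison formula, together with Theorem \ref{theopreultra} and the following semipositivity criterion. A $Z$-matrix $Z$ for which some $x \geq 0$ satisfies $Zx > 0$ is a nonsingular $M$-matrix. Indeed, such an $x$ is automatically positive: if $x_i=0$, then $(Zx)_i=\sum_{j\neq i} z_{i,j}x_j \leq 0$, a contradiction. Throughout, write $\tau=\tau(A)<0$, $\omega=\omega(A)$ and $\tom=\omega-\tau$. By Theorem \ref{nested} we have $A=M+\tau\xi_n\xi_n^T$ after a permutation, where $M$ is a generalized ultrametric matrix with $\tau(M)=0$ and $\omega(M)=\tom$.

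\emph{Part i), strict case.} Here $M$ is strictly generalized ultrametric, so it is nonsingular and $M^{-1}$ is a strictly diagonally dominant $M$-matrix. Hence $y:=M^{-1}\xi_n>0$, and Theorem \ref{theopreultra} gives $\tom s<1$ with $s:=\xi_n^TM^{-1}\xi_n$. Since $A$ is nonsingular, $1+\tau s\neq 0$, and Sherman--Morrison yields
\[ A^{-1}\xi_n=\frac{y}{1+\tau s}. \]
Suppose $1+\tau s>0$. Then $x:=A^{-1}\xi_n>0$ satisfies $Ax=\xi_n>0$. Since $A$ is a $Z$-matrix by v), the criterion makes $A$ a nonsingular $M$-matrix, so $p=n$, which contradicts $p<n$. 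Therefore $1+\tau s<0$ and $A^{-1}\xi_n<0$. A direct computation gives
\[ \omega\,\xi_n^TA^{-1}\xi_n=\frac{(\tau+\tom)s}{1+\tau s}. \]
Because the denominator is negative, the inequality $>1$ is equivalent to $(\tau+\tom)s<1+\tau s$, that is, $\tom s<1$, which holds.

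\emph{Part i), weak case.} The matrix $M$ may now be singular. I would perturb to $A_\varepsilon=A+\varepsilon I$, which is strictly shifted generalized ultrametric, still has nonpositive off-diagonal entries, and is nonsingular for small $\varepsilon>0$. The type index of $A_\varepsilon$ may increase to $n$. However, if $A_\varepsilon$ were a nonsingular $M$-matrix for a sequence $\varepsilon\to 0$, then $A$ would be an $M$-matrix, and being nonsingular it would be a nonsingular $M$-matrix, a contradiction. So for small $\varepsilon$ the strict case applies, and letting $\varepsilon\to0$ gives the non-strict inequalities. The main obstacle of the whole proof is this passage to the limit while keeping the type condition. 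An alternative is to rerun the argument above with $\geq$, using the weak form of the semipositivity criterion just proved.

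\emph{Part ii).} The off-diagonal blocks are $A_{1,2}=\tau\xi_r\xi_{n-r}^T$ and $A_{2,1}=\omega\xi_{n-r}\xi_r^T$. With $s_1:=\xi_r^TA_{1,1}^{-1}\xi_r$ this gives
\[ A/A_{1,1}=A_{2,2}-\omega\tau s_1\,\xi\xi^T=D+\tau(1-\omega s_1)\xi\xi^T, \]
where $D$ is the generalized ultrametric matrix from (\ref{eqrep}).

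Case $A_{1,1}$ of type $U^{(-1)}_{p,r-p}$ with $p<r$: part i) gives $s_1\leq 0$ and $\omega(A_{1,1})s_1\geq1$. By (\ref{eqomega}), $\omega\leq\omega(A_{1,1})$, so $\omega s_1\geq\omega(A_{1,1})s_1\geq1$. Hence $\tau(1-\omega s_1)\geq 0$, and $A/A_{1,1}$ is $D$ plus a nonnegative constant matrix, which is generalized ultrametric.

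Case $A_{1,1}$ a nonsingular $M$-matrix: then $s_1>0$, and $\omega\leq 0$ because the off-diagonal entries are nonpositive. The shift added to $A_{2,2}$ is $-\omega\tau s_1\leq 0$, so the Schur complement is a shifted generalized ultrametric matrix with nonpositive off-diagonal entries. To get $\tilde p\leq q$, let $S[J]$ be a nonsingular $M$-matrix principal submatrix of $S=A/A_{1,1}$ and set $x=S[J]^{-1}\xi>0$. Then
\[ A_{2,2}[J]\,x=\xi+\omega\tau s_1(\xi^Tx)\,\xi>0, \]
so by the criterion $A_{2,2}[J]$ is a nonsingular $M$-matrix, and therefore $|J|\leq q$.

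The statements for $A/A_{2,2}$ follow symmetrically, with the roles of $\tau$ and $\omega$ interchanged.
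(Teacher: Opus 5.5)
The paper gives no proof of this lemma; it is imported from \cite{NabV95b}, so there is no in-text argument to compare yours with. Judged on its own, your proof is correct. The following steps are all sound:
\begin{itemize}
\item the Sherman--Morrison formula $A^{-1}\xi_n=y/(1+\tau s)$;
\item ruling out $1+\tau s>0$ via the semipositivity criterion and $p<n$;
\item reducing $\omega(A)\xi_n^TA^{-1}\xi_n>1$ to $\delta(A)\,s<1$ from Theorem \ref{theopreultra};
\item the identity $A/A_{1,1}=D+\tau(1-\omega s_1)\xi\xi^T$;
\item the estimate $A_{2,2}[J]x\geq\xi$, which gives $\tilde p\leq q$.
\end{itemize}

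A few steps should be made explicit.

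\emph{Weak case of part i).} Passing to the limit in $\omega(A_\varepsilon)\xi_n^TA_\varepsilon^{-1}\xi_n>1$ needs $\tau(A+\varepsilon I)=\tau(A)$ and $\omega(A+\varepsilon I)=\omega(A)$. Both hold. For $n>1$, condition iv) forces $\tau(A)$ to be attained off the diagonal. Deleting diagonal positions from the set defining $\omega$ cannot change its minimum, because that minimum is already attained in the off-diagonal block $A_{2,1}=\omega(A)\xi\xi^T$. The first identity is also what shows that $A_\varepsilon$ is not a generalized ultrametric matrix, as the type definition requires.

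\emph{The suggested alternative.} Drop the idea of rerunning the argument with $\geq$. In the weak case $M$ can be singular while $A$ is not; for example $M=\mathrm{diag}(0,1)$ with $\tau=-1$. Then Sherman--Morrison is unavailable, so the perturbation argument is the one to keep.

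\emph{Second claim of part ii).} Here $A$ itself is not assumed to have nonpositive off-diagonal entries, so $\omega(A)\leq 0$ must come from (\ref{eqomega}). Indeed $\omega(A)\leq\omega(A_{2,2})\leq 0$, since $\omega(A_{2,2})$ is either an off-diagonal entry of $A_{2,2}$ or, if $n-r=1$, its single negative entry. Similarly, $\tau(A)\leq\tau(A_{1,1})<0$ gives $\tau<0$. Finally, $\tau(A/A_{1,1})\leq\tau(A_{2,2})<0$ shows that the Schur complement is not a generalized ultrametric matrix, which type $U^{(-1)}$ requires.

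\emph{The statement for $A/A_{2,2}$.} Nothing needs to be interchanged here. Both diagonal blocks carry the shift $\tau$, so $A/A_{2,2}=C+\tau(1-\omega s_2)\xi_r\xi_r^T$ with $s_2=\xi^TA_{2,2}^{-1}\xi$. The same argument then runs using $\omega(A)\leq\omega(A_{2,2})$.

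\emph{The case $r=1$.} In the first claim of part ii), part i) as stated needs $n>1$. But $A_{1,1}=[a]$ with $a<0$ gives $\omega(A_{1,1})s_1=1$ directly.
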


The next Theorem is Theorem 2.5 in \cite{NabV95b}.

\begin{theorem} \label{nonsing}
Let $A = [\aij] \in \bRn $ be a \gum \ or a \sgum\ with a 
negative diagonal entry. Then, $A$ is nonsingular if and only
if $A$ contains no 
zero row or zero column and no two rows or two columns which are the same. 
\end{theorem}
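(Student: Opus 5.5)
The statement is Theorem \ref{nonsing}: a generalized ultrametric matrix, or a shifted generalized ultrametric matrix with a negative diagonal entry, is nonsingular if and only if it has no zero row or column and no two equal rows or columns. Necessity is trivial, since a zero row or two equal rows make $A$ singular. For sufficiency I will argue by induction on $n$, using the nested block form of Theorem \ref{nested}. For $n=1$ the claim is immediate, because a nonzero $1\times 1$ matrix is invertible.

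For $n>1$, write $PAP^T$ as in (\ref{eqrep}), with off-diagonal blocks $A_{1,2}=\tau(A)\xi_r\xi_{n-r}^T$ and $A_{2,1}=\omega(A)\xi_{n-r}\xi_r^T$.

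\textbf{Step 1.} Show that the diagonal blocks inherit the hypotheses. If two rows of $A_{1,1}$ coincide, the corresponding rows of $A$ also coincide, since in the block $A_{1,2}$ all those rows equal $\tau(A)\xi^T$. The same holds for zero rows when $\tau(A)=0$. If $\tau(A)\neq 0$, a zero row of $A_{1,1}$ has to be handled separately, and in that case it is easier to pass to a rank-one argument (Step 3). Also, $A_{1,1}$ and $A_{2,2}$ are again shifted generalized ultrametric matrices. They are either generalized ultrametric or contain a negative diagonal entry; otherwise I will reduce to the generalized ultrametric case by noting that the shift can be absorbed.

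\textbf{Step 2.} Treat the generalized ultrametric case, $\tau(A)\ge 0$. Here I would use the result of \cite{NabV95a}: a strictly generalized ultrametric matrix is invertible by Theorem \ref{theopreultra}. For the non-strict case, note that a generalized ultrametric matrix with no two equal rows or columns satisfies the strict inequality in iv) wherever it matters; rows $i,j$ with $a_{i,i}=a_{i,j}=a_{j,i}$ lead to equal rows via the tree representation. Alternatively, the inductive Schur complement argument of Step 3 applies with $\tau\ge 0$ as well.

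\textbf{Step 3.} Show $\det A\neq 0$ via $A=M+\tau\xi\xi^T$, where
\[ M=\left[\begin{array}{cc} C & O\\ \delta(A)\,\xi\xi^T & D\end{array}\right]. \]
$M$ is block lower triangular, so $\det M=\det C\det D$, with $C=A_{1,1}-\tau\xi\xi^T$ and $D=A_{2,2}-\tau\xi\xi^T$ generalized ultrametric. By the induction hypothesis (after checking that $C$ and $D$ have no equal or zero rows), $M$ is nonsingular, and $M^{-1}$ is an $M$-matrix by Theorem \ref{theopreultra}. By the matrix determinant lemma, $\det A=\det M\,(1+\tau\,\xi^TM^{-1}\xi)$, so it remains to show $\tau\,\xi^TM^{-1}\xi\neq -1$. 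Block inversion gives $\xi^TM^{-1}\xi=\xi^TC^{-1}\xi+\xi^TD^{-1}\xi-\delta(A)(\xi^TD^{-1}\xi)(\xi^TC^{-1}\xi)$. Now apply (\ref{omegaabw}) to $C$ and $D$, together with $\omega(C),\omega(D)\ge\omega(A)-\tau$ from (\ref{eqomega}). The negative diagonal entry forces $\tau<0$ with $\tau$ sufficiently negative, and I expect this to give $\tau\,\xi^TM^{-1}\xi\neq -1$ strictly, except in degenerate cases that correspond exactly to two equal rows or columns.

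\textbf{Main obstacle.} Step 3, the sign bookkeeping that excludes $1+\tau\,\xi^TM^{-1}\xi=0$, is the hard part. My first attempt will be to use Lemma \ref{strict} (the estimates (\ref{abschatz}) and (\ref{abschatzweak})) on the blocks that are of type $U^{(-1)}$, splitting into cases according to whether each diagonal block is generalized ultrametric or of negative type. The remaining task is then to trace equality in these estimates back to coinciding rows or columns.
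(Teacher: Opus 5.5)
The paper does not prove this theorem (it is quoted from \cite{NabV95b}), so your argument has to stand on its own. As written it has genuine gaps.

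\textbf{Step 3 needs $M$ to be nonsingular, and it need not be.} Step 3 rests on $M=PAP^T-\tau(A)\xi_n\xi_n^T$ being nonsingular. But $M$ has a zero row and column whenever some row of $A$ is constantly equal to $\tau(A)$, and this is compatible with $A$ being nonsingular. Take the shifted matrix $A=\left[\begin{array}{rr}-1&-1\\-1&0\end{array}\right]$, which has a negative diagonal entry and $\det A=-1$, or the ultrametric matrix $A=\left[\begin{array}{rr}1&1\\1&2\end{array}\right]$. In both cases $M=\mathrm{diag}(0,1)$. So you cannot check that $C$ and $D$ have no zero rows, and the formula $\det A=\det M\,(1+\tau\,\xi^TM^{-1}\xi)$ is unavailable.

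\textbf{Step 2 is false as stated.} The second matrix has distinct rows and $a_{1,1}=a_{1,2}=a_{2,1}$, yet it is not strictly ultrametric. So Theorem \ref{theopreultra} does not cover it, and your fallback to Step 3 fails on the same matrix. The unshifted case, which carries the real content of the theorem, is therefore not proved.

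\textbf{The key inequality is only hoped for, and your tools point the wrong way.} You never prove $\tau\,\xi^TM^{-1}\xi\neq-1$. Estimate (\ref{omegaabw}) bounds $\xi^TM^{-1}\xi$ from \emph{above}. But the hypothesis $a_{i,i}=m_{i,i}+\tau<0$ says that $|\tau|$ is large, so it must be paired with a \emph{lower} bound. Lemma \ref{strict} only concerns matrices of type $U^{(-1)}$. A diagonal block of $A$ can have a negative diagonal entry and a positive off-diagonal entry, so it is neither of that type nor generalized ultrametric, and your case split is not exhaustive.

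\textbf{The missing lower bound is elementary.} Let $G$ be a nonsingular generalized ultrametric matrix. By Theorem \ref{theopreultra}, $x=G^{-1}\xi_n\ge0$. Since $g_{i,j}\le g_{i,i}$, row $i$ of $Gx=\xi_n$ gives $1\le g_{i,i}\,\xi_n^TG^{-1}\xi_n$ for every $i$.

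\textbf{Shifted case: choose the shift freely.} Do not use the shift of the nested form. Any $\sigma>-\tau(A)$ makes $G=A+\sigma\xi_n\xi_n^T$ an entrywise positive generalized ultrametric matrix. Such a $G$ has no zero row, and it has two equal rows or columns exactly when $A$ does. Once the unshifted case is known, $G$ is nonsingular. Then $a_{i,i}=g_{i,i}-\sigma<0$ gives $\sigma\,\xi_n^TG^{-1}\xi_n\ge\sigma/g_{i,i}>1$, hence $\det A=\det G\,(1-\sigma\,\xi_n^TG^{-1}\xi_n)\neq0$.

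\textbf{Unshifted case: it needs its own induction.}
\begin{enumerate}
\item If $\tau(A)=0$, the nested form is block lower triangular and the blocks inherit the hypotheses. Use that, by iv), $a_{i,i}=0$ forces row and column $i$ to vanish.
\item If $\tau(A)>0$ and $M$ has no zero row, then $M$ is nonsingular by the case $\tau=0$. Hence $1+\tau(A)\,\xi_n^TM^{-1}\xi_n\ge1$.
\item If $M$ has a zero row, there is exactly one index $i$ whose row and column in $A$ are constantly $\tau(A)$. Subtract row $i$ from the other rows. This gives $\det A=\tau(A)\det(A'-\tau(A)\xi_{n-1}\xi_{n-1}^T)$, where $A'$ is $A$ with row and column $i$ deleted. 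The matrix $A'-\tau(A)\xi_{n-1}\xi_{n-1}^T$ is generalized ultrametric with no zero row and no two equal rows or columns, so the induction hypothesis applies.
\end{enumerate}
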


Theorem 2.7 in \cite{NabV95b} is the following

\begin{theorem} \label{det}
Let $A = [\aij] \in \bRn $ be a nonsingular \sgum\ of type $U^{(-1)}_{p,n-p}$. If
 $0 \leq p < n$, then
\beqo
det A < 0.
\eeqo
Moreover, each principal minor of order $m$ with $p < m < n$ is nonpositive,
and there exists a positive principal minor of order $p.$ If $p=n$ (i.e., $A$ is
of type $U_{n,0}^{(-1)})$ or if $A$ is a nonsingular 
 \gum, \ then $det A > 0$.
\end{theorem}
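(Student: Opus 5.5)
Following \cite{NabV95b}, I would argue by induction on $n$, using the nested block form of Theorem \ref{nested} together with the Schur complement facts of Lemma \ref{strict}. For $n=1$ the claims are immediate: a nonsingular $1\times 1$ matrix of type $U^{(-1)}_{0,1}$ is a single negative entry, so $\det A<0$. A matrix of type $U^{(-1)}_{1,0}$ or a generalized ultrametric matrix is positive, so $\det A>0$.

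For $n>1$, write $PAP^T$ as in (\ref{eqrep}) with diagonal blocks $A_{1,1}$ and $A_{2,2}$. Both blocks are shifted generalized ultrametric matrices with nonpositive off-diagonal entries, so each is either an $M$-matrix or of some type $U^{(-1)}_{q,\cdot}$. Permutational similarity leaves the determinant and the set of principal minors unchanged, so I can work with $PAP^T$ directly.

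\textbf{Determinant.} I would use $\det A=\det A_{1,1}\cdot\det(A/A_{1,1})$, or the analogous formula with $A_{2,2}$, choosing the block according to its type.
\begin{itemize}
\item If $A_{1,1}$ is of type $U^{(-1)}_{p',r-p'}$ with $p'<r$, induction gives $\det A_{1,1}<0$. Lemma \ref{strict} ii) says $A/A_{1,1}$ is a generalized ultrametric matrix, nonsingular because $A$ is, so its determinant is positive by Theorem \ref{theopreultra}. Hence $\det A<0$.
\item If both blocks are nonsingular $M$-matrices, Lemma \ref{strict} ii) makes $A/A_{1,1}$ of type $U^{(-1)}_{\tilde p,\cdot}$ with $\tilde p\le q$. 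By induction, $\det(A/A_{1,1})<0$ provided $\tilde p<n-r$. This in turn requires $p<n$, that is, that $A$ itself is not an $M$-matrix.
\item Singular diagonal blocks are handled by swapping the roles of $A_{1,1}$ and $A_{2,2}$, or by a perturbation argument in $\tau$.
\end{itemize}
The case $p=n$ is the $M$-matrix case, where $\det A>0$ is classical. The generalized ultrametric case is Theorem \ref{theopreultra}, since $A^{-1}$ is then a nonsingular $M$-matrix.

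\textbf{Principal minors.} A principal submatrix of a shifted generalized ultrametric matrix of type $U^{(-1)}$ is again of that kind: it is obtained by deleting leaves, which keeps the tree representation. A principal submatrix of order $m>p$ cannot be a nonsingular $M$-matrix, by the maximality of $p$. So it is of type $U^{(-1)}_{p'',m-p''}$ with $p''<m$ and, if nonsingular, has negative determinant by the first part; a singular one has minor $0$. Either way, every such minor is nonpositive. The positive minor of order $p$ comes from the $p\times p$ nonsingular $M$-matrix guaranteed by vi).

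\textbf{Main obstacle.} The hardest step is the $M$-matrix/$M$-matrix case of the block recursion. There one must show that the Schur complement is not an $M$-matrix, i.e.\ $\tilde p<n-r$. I would try to get this from the sign information $A^{-1}\xi_n\le 0$ in (\ref{abschatzweak}): if $A/A_{1,1}$ were an $M$-matrix, its inverse, which is a block of $A^{-1}$, would be nonnegative. That would contradict the negative row sums combined with $\omega(A)\xi_n^TA^{-1}\xi_n\ge1$.
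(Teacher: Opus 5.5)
The paper gives no proof of this statement; it is quoted from \cite{NabV95b}, so your argument has to stand on its own. Your skeleton is sound: induction over the nested form (\ref{eqrep}), Schur complements via Lemma~\ref{strict}~ii), and heredity under principal submatrices. The determinant step works whenever at least one diagonal block is nonsingular. For instance, if $A_{1,1}$ is singular and $A_{2,2}$ is a nonsingular $M$-matrix, then $A_{1,1}$ is not a nonsingular $M$-matrix, and Lemma~\ref{strict}~ii) for $A/A_{2,2}$ gives $\tilde p\le q<r$ directly.

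\textbf{The genuine gap} is the case where \emph{both} diagonal blocks are singular, which you dismiss in one line. This case does occur.
\begin{itemize}
\item The matrix $A=\left[\begin{array}{rr}0&-1\\-1&0\end{array}\right]$ is a nonsingular shifted generalized ultrametric matrix of type $U^{(-1)}_{0,2}$, with $\tau(A)=\omega(A)=-1$, $C=D=[1]$ and $A_{1,1}=A_{2,2}=[0]$.
\item A less degenerate example is $A=\left[\begin{array}{rrr}1&-1&-2\\-1&1&-2\\-2&-2&0\end{array}\right]$, of type $U^{(-1)}_{1,2}$ with $\det A=-16$. Its blocks are a singular $M$-matrix and $[0]$.
\end{itemize}
Neither Schur complement exists and swapping changes nothing, so a limiting argument is unavoidable. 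You must check that the approximants stay nonsingular, keep a nested form with nonsingular diagonal blocks, and are still \emph{not} $M$-matrices.

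The simplest choice is $A_\varepsilon=A+\varepsilon I$.
\begin{itemize}
\item The off-diagonal entries are unchanged, so $A_\varepsilon$ is again a shifted generalized ultrametric matrix of type $U^{(-1)}$. It keeps the nested form (\ref{eqrep}) with $C,D$ replaced by $C+\varepsilon I$, $D+\varepsilon I$.
\item $A_\varepsilon$ is nonsingular for small $\varepsilon$, and the blocks $A_{i,i}+\varepsilon I$ are singular for only finitely many $\varepsilon$.
\item Write $A=sI-B$ with $B\ge 0$. Nonsingularity of $A$ together with $p<n$ forces $s<\rho(B)$, so $A_\varepsilon$ is not an $M$-matrix for small $\varepsilon$. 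Its type index may exceed $p$, which is harmless.
\end{itemize}
The nonsingular-block cases then give $\det A_\varepsilon<0$. Letting $\varepsilon\to 0$ yields $\det A\le 0$, hence $\det A<0$. Your ``perturbation in $\tau$'', $A+t\xi_n\xi_n^T$ with small $t<0$, does preserve the type index. However, you would then have to prove separately that the shifted blocks become nonsingular.

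\textbf{The ``main obstacle'' argument is not valid as written.} $(A/A_{1,1})^{-1}\ge 0$ is only the $(2,2)$ block of $A^{-1}$. The same rows of $A^{-1}$ also contain $-(A/A_{1,1})^{-1}A_{2,1}A_{1,1}^{-1}$, so nothing yet contradicts $A^{-1}\xi_n\le 0$. The repair is short and does not need $\omega(A)\xi_n^TA^{-1}\xi_n\ge 1$. Since $A_{1,2}=\tau(A)\xi_r\xi_{n-r}^T\le 0$ and $A_{2,1}=\omega(A)\xi_{n-r}\xi_r^T\le 0$, suppose $A_{1,1}$ and $A/A_{1,1}$ were nonsingular $M$-matrices. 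Then the block inverse formula gives $A^{-1}\ge 0$, so $A$ would be a nonsingular $M$-matrix, contradicting $p<n$. Hence $\tilde p<n-r$.

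A small point in the minor argument: a principal submatrix can be a generalized ultrametric matrix (nonnegative diagonal) rather than of type $U^{(-1)}$. If nonsingular, it is a positive diagonal $M$-matrix, which is excluded for orders $>p$, so your conclusion stands.
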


Finally we have Theorem 2.9 in \cite{NabV95b}.

\begin{theorem} \label{inverse}
Let $A = [\aij] \in \bRn $ be a nonsingular \sgum. \ If A is a \gum,\ 
then $A^{-1}$ is in $L_{n}$. If $A$ is of 
type $U^{(-1)}_{n-m,m}$, for an $m$ satisfying $1 \leq m \leq n$, then 
$A^{-1}$ is in $L_{m-1}$, i.e., $A^{-1}$ is of the form {\rm (cf. (1.1))}
\beqo
A^{-1} = tI - B,
\eeqo
with $B \geq O$ and $\rho_{m-1}(B) \leq t < \rho_m(B).$ Moreover, $\rho_{m-1}(B)
 = t$ if and only if there exists a principal submatrix, of order 
$n-m+1$, in $A$ which is singular.
\end{theorem}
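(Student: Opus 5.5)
The plan has two parts. First I show that $A^{-1}$ is a $Z$-matrix. Then I place it in the correct class $L_s$ using the minor characterization of Theorem \ref{equiformu}, together with the sign information on principal minors of $A$ from Theorem \ref{det}.

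\emph{Generalized ultrametric case.} If $A$ is a nonsingular \gum, Theorem \ref{theopreultra} already says that $A^{-1}$ is a (diagonally dominant) $M$-matrix. Hence $A^{-1}\in L_n$ and nothing more is needed.

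\emph{Shifted case: $A^{-1}$ is a $Z$-matrix.} Let $A$ be of type $U^{(-1)}_{n-m,m}$ with $1\le m\le n$. I prove the $Z$-property by induction on $n$, using the nested form (\ref{eqrep}) of Theorem \ref{nested}. Since the classes are invariant under permutational similarity, I may take $P=I$. I write $A^{-1}$ blockwise via Schur complements. The diagonal blocks are $(A/A_{2,2})^{-1}$ and $(A/A_{1,1})^{-1}$. By Lemma \ref{strict} ii), each of these Schur complements is either a \gum\ or again a \sgum\ of type $U^{(-1)}$, so by induction its inverse is a $Z$-matrix. The off-diagonal blocks are
\[
-A_{1,1}^{-1}A_{1,2}(A/A_{1,1})^{-1}=-\tau(A)\,(A_{1,1}^{-1}\xi_r)\,(\xi_{n-r}^T(A/A_{1,1})^{-1})
\]
and the analogous expression involving $\omega(A)$. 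Their signs are fixed by the row/column sum information. Theorem \ref{theopreultra} gives $M^{-1}\xi\ge 0$ for \gums. Lemma \ref{strict} i) gives $A^{-1}\xi_n\le 0$ for type $U^{(-1)}$. Combined with $\tau(A)<0$ and $\omega(A)\ge\tau(A)$, this should make both products nonpositive.

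\emph{Main obstacle.} I expect this sign bookkeeping across the mixed cases to be the hardest step: $A_{1,1}$ may be an $M$-matrix, a \gum, or of type $U^{(-1)}$, and $A_{1,1}$ or the Schur complements may be singular. I would first treat the strict case, where all the required inverses exist by Theorem \ref{nonsing} and Lemma \ref{strict} gives strict signs. I would then pass to the general case by perturbing the diagonal, $A+\varepsilon I$, and letting $\varepsilon\to 0$. This works because nonpositivity of off-diagonal entries is preserved in the limit as long as $A$ itself is nonsingular.

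\emph{Locating the class.} Once $A^{-1}$ is known to be a $Z$-matrix, I apply Theorem \ref{equiformu} to the matrix $A^{-1}$, with $s=m-1$ and $p=n-m<n$. Its inverse is $A$, and Theorem \ref{det} gives the following.
\begin{itemize}
\item[(a)] $\det A<0$, hence $\det A^{-1}<0$.
\item[(b)] Every principal minor of $A$ of order at least $n-m+1=n-s$ is nonpositive.
\item[(c)] Some principal minor of $A$ of order $p=n-m=n-s-1$ is positive.
\end{itemize}
This is exactly alternative a) of Theorem \ref{equiformu}, so $A^{-1}\in L_{m-1}$.

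\emph{Equality statement.} Write $A^{-1}=tI-B$. Then $\rho_{m-1}(B)=t$ holds exactly when some $(m-1)\times(m-1)$ principal submatrix of $A^{-1}$ is singular. By Jacobi's identity for complementary minors, a principal minor of $A^{-1}$ equals $\det A^{-1}$ times the complementary principal minor of $A$. So this happens if and only if some principal submatrix of $A$ of order $n-m+1$ is singular, which is the claimed characterization.
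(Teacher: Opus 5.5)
The paper gives no proof of this statement; it quotes it as Theorem 2.9 of \cite{NabV95b}. So I judge your proposal on its own.

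\textbf{What is correct.} The \gum\ case is fine. So is the placement of $A^{-1}$ in $L_{m-1}$ via Theorem \ref{det} and alternative a) of Theorem \ref{equiformu} (with $s=m-1$, so $n-s-1=n-m=p$). The equality clause via Perron--Frobenius and the complementary-minor identity is also correct.

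\textbf{Where the gap is.} The one substantial step, that $A^{-1}$ is a $Z$-matrix, is left at ``this should make both products nonpositive''. The one justification you do give there fails: strictness does not give you the Schur complements. For instance, $A=\left[\begin{array}{rr}0&-2\\-1&0\end{array}\right]$ is a nonsingular strictly \sgum\ of type $U^{(-1)}_{0,2}$, since $A+2\xi_2\xi_2^T$ is a strictly \gum. Yet its nested blocks are $A_{1,1}=A_{2,2}=[0]$. Theorem \ref{nonsing} does not help, because it needs a \gum\ or a negative diagonal entry. Your perturbation still works, but for a different reason. Each principal minor of $A+\varepsilon I$ is a monic polynomial in $\varepsilon$, so for all but finitely many $\varepsilon$ every principal submatrix is nonsingular. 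Hence every block and Schur complement met in the recursion is nonsingular. You must also keep $\varepsilon$ small enough that $A+\varepsilon I$ is not an $M$-matrix. This is possible: writing $A=cI-G$ with $G\ge O$, you have $c<\rho(G)$, because $A$ is nonsingular and $m\ge 1$.

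\textbf{The sign bookkeeping.} It needs a case split and one fact you have not stated. The off-diagonal entries of $A_{1,1}$ are nonpositive, and a \gum\ with such entries is diagonal. So a nonsingular $A_{1,1}$ is either of type $U^{(-1)}_{p,r-p}$ with $p<r$, or a nonsingular $M$-matrix.
\begin{itemize}
\item[(1)] In the first case, part i) of Lemma \ref{strict}, applied to $A_{1,1}$ and $A_{1,1}^T$, gives $A_{1,1}^{-1}\xi_r\le 0$ and $\xi_r^TA_{1,1}^{-1}\le 0$. Meanwhile $A/A_{1,1}$ is a \gum, so by Theorem \ref{theopreultra} its inverse has nonnegative row and column sums.
\item[(2)] In the second case $A_{1,1}^{-1}\ge O$, and you need $A/A_{1,1}$ to be of type $U^{(-1)}_{\tilde p,n-r-\tilde p}$ with $\tilde p<n-r$. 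Part ii) of Lemma \ref{strict} only gives $\tilde p\le q$, which permits $\tilde p=n-r$ when $A_{2,2}$ is an $M$-matrix. The missing fact excludes this: if $A_{1,1}$ and $A/A_{1,1}$ were both nonsingular $M$-matrices, the $Z$-matrix $A$ would be one too, contradicting $m\ge 1$ (this is where the smallness of $\varepsilon$ is used).
\end{itemize}
The same fact ensures that neither diagonal block $(A/A_{k,k})^{-1}$ of $A^{-1}$ is the inverse of a nondiagonal $M$-matrix, which would have positive off-diagonal entries. So your induction hypothesis should be stated for \gums\ and for types $U^{(-1)}_{q,k-q}$ with $q<k$.

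Finally, the $(2,1)$ block $-\omega(A)\,((A/A_{1,1})^{-1}\xi_{n-r})(\xi_r^TA_{1,1}^{-1})$ needs $\omega(A)\le 0$, which comes from condition v). The fact $\omega(A)\ge\tau(A)$ that you cite is not what is used. With these additions your outline becomes a complete proof.
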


Thus, nonsingular shifted generalized ultrametric matrices of type $U^{(-1)}_{0,n}$, $(p = 0)$, are inverse $N_0$ matrices.

\section{Inverse acyclic $Z$-matrices}

We  start with  the following characterization of symmetric inverse acyclic $M$-matrices.

\begin{theorem}\label{invM}
  Let $ A = [a_{i,j}] \in \bRn$ be symmetric, irreducible and of tree structure  for a given tree $\Gamma_0$ with weights $w_{i,j}$. Then the following are equivalent
  \begin{itemize}
  \item $A$ is nonsingular and $A^{-1}$ is an M-matrix,
  \item all  weights $w_{i,j}$  are positive.
  \end{itemize}
\end{theorem}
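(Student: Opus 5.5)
Both directions can be read off the explicit inversion formula in Theorem \ref{iff-2}, combined with Theorem \ref{theo:nonzero-w}.

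For the direction ``all weights positive $\Rightarrow$ $A^{-1}$ is an $M$-matrix'', Theorem \ref{theo:nonzero-w} gives nonsingularity at once, since no weight is zero. Put $B := A^{-1}$. Theorem \ref{iff-2}, applied in the direction (2)$\Rightarrow$(1) with $B$ in the role of the acyclic matrix, shows that $G(B)=\Gamma$, that $B\xi_n = \gamma \hat e_n$, and that the entries satisfy $w_{i,j} = -1/b_{i,j}$ on tree edges and $w_{0,1} = 1/\gamma$. Hence $b_{i,j} = -1/w_{i,j} < 0$ for every edge $(i,j)$ of $\Gamma$, and $b_{i,j}=0$ off the edges, so $B$ is a $Z$-matrix. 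Positivity of $w_{0,1}$ gives $\gamma>0$, so the row sums of $B$ are $\gamma>0$ in row $1$ and $0$ in every other row. Thus $B$ is a symmetric, irreducible, weakly diagonally dominant $Z$-matrix with positive diagonal. Its diagonal entries are at least the sum of the absolute off-diagonal entries, and at least one row is strictly dominant. By the standard result for irreducibly diagonally dominant $Z$-matrices, $B$ is then a nonsingular $M$-matrix. Alternatively, one could note that with positive weights $A$ is a symmetric ultrametric matrix and invoke Theorem \ref{theopreultra} directly.

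For the converse, assume $A$ is nonsingular and $B=A^{-1}$ is an $M$-matrix. By Theorem \ref{theo:nonzero-w} all weights are nonzero. Theorem \ref{iff-2}, now in the direction (2)$\Rightarrow$(1), gives $G(B)=\Gamma$ and $w_{i,j} = -1/b_{i,j}$ for the edges of $\Gamma$. Because $B$ is a $Z$-matrix and these entries are nonzero, each $b_{i,j}<0$, hence $w_{i,j}>0$. It remains to show $w_{0,1} = 1/\gamma > 0$. Here $\gamma = \hat e_n^T B \xi_n$ and $B\xi_n = \gamma \hat e_n$, so $\xi_n = \gamma A\hat e_n$, that is, the first column of $A$ equals $\xi_n/\gamma$. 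Since $B$ is a nonsingular $M$-matrix, $A = B^{-1}\ge 0$. Also $a_{1,1} = w_{0,1}$, because the path from vertex $1$ to the root is the single edge $e_{0,1}$. Therefore $w_{0,1}$ is a nonzero nonnegative number, hence positive.

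The main obstacle is purely one of bookkeeping. One must check that Theorem \ref{iff-2} applies with the correct roles of $A$ and its inverse. One must also confirm that the symmetry hypothesis is exactly what is needed to invoke it, and that the root weight $w_{0,1}$ is handled separately from the interior edge weights. In the first direction, the irreducible diagonal dominance argument needs at least one strict row, and that strict row is exactly row $1$, coming from $\gamma>0$.
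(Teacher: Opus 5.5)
Your proof is correct, but it takes a different route from the paper's. The paper applies Klein's formula from Theorem \ref{kl1}, $(A^{-1})_{i,j}=-a_{i,j}/d_{i,j}$, and argues via the sign of $d_{i,j}$. For a tree edge with $i<j$ one has $a_{i,j}=a_{i,i}$, hence $d_{i,j}=a_{i,i}(a_{j,j}-a_{i,i})=a_{i,i}w_{i,j}$. Then $A\ge 0$ together with $(A^{-1})_{i,j}\le 0$ forces $w_{i,j}>0$. For the other implication the paper appeals to ultrametric theory.

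You instead read the signs directly off the closed form $b_{i,j}=-1/w_{i,j}$, $\gamma=1/w_{0,1}$ of Theorem \ref{iff-2}, which is Klein's expression after exactly this cancellation. You then prove the $M$-matrix property by irreducible diagonal dominance of $B$.

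Your version has two advantages. First, it handles the root weight explicitly, whereas the paper's $d_{i,j}$ argument only covers edges of $\Gamma$ and leaves $w_{0,1}=a_{1,1}>0$ implicit. Second, it avoids the paper's slip of calling $A$ \emph{strictly} ultrametric. That claim is false in general, since $a_{i,i}=a_{i,j}$ whenever $j$ lies in the branch rooted at $i$ (e.g.\ $a_{1,1}=a_{1,2}$ in Example \ref{ex1}). Your alternative rightly uses only the nonsingular, non-strict part of Theorem \ref{theopreultra}.

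The price is that you use Theorem \ref{iff-2} in the reading ``every symmetric matrix of tree structure with nonzero weights has its inverse given by these formulas.'' Item (2) as literally stated builds the weights $-1/a_{i,j}$ into the hypothesis instead. The paper endorses your reading in the remark after Theorem \ref{theo:nonzero-w}. It also follows at once from the factorization $A=LWL^T$, where $W$ is the diagonal matrix of weights and $L$ is the unit lower triangular $0/1$ ancestor matrix, so that $L^{-1}=I-P$ with $P$ the parent matrix. The paper's route, by contrast, needs only Klein's treeangle theorem.
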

\begin{proof}
  If  $A$ is nonsingular then with Theorem \ref{kl1} the off-diagonal entries different from zero are given  by
  \beq \label{eq:off}
  (A^{-1})_{i,j} = -\frac{a_{i,j}}{d_{i,j}}.
  \eeq
  If  $A^{-1}$ is an $M$-matrix, the off-diagonal entries are nonpositive, but the entries of $A$ must be nonnegative. Thus, the $d_{i,j}$ in (\ref{def-dij})  must be positive. So
  \beqo
  d_{i,j} = a_{i,i}a_{j,j} -  a_{i,j} a_{j,i} > 0.
  \eeqo
  Now, assume w.l.o.g. that $i < j$. Then  due  to the tree structure we have
  \beqo
  a_{i,i} = a_{i,j} = a_{i,j}.
  \eeqo
  If there would be a  nonpositive weight  $w_{i,j}$, then
   \beqo
 0 \leq a_{j,j} \leq a_{i,j},
  \eeqo
  which implies 
\beqo
  d_{i,j} \leq 0.
  \eeqo
  But this is a contradiction. So all  weights are positive.
  
  Conversely, if all   weights are positive, then $A$ is a \stum  \ and no two rows or columns are the same, hence $A$ is nonsingular and the inverse is an $M$-matrix.
  \end{proof}

Next we give a characterization of symmetric inverse acyclic $N_0$-matrices.

\begin{theorem}\label{invN0}
  Let $ A = [a_{i,j}] \in \bRn$ be symmetric, irreducible and of tree structure  for a given tree $\Gamma_0$ with weights $w_{i,j}$. Then the following are equivalent
  \begin{itemize}
  \item[1.] $A$ is nonsingular and $A^{-1}$ is an $N_0$-matrix,
  \item[2.] \begin{enumerate}
    \item[(a)]  $w_{0,1} < 0$
    \item[(b)]  $w_{i,j} > 0$ for all $i,j > 0$
      \item[(c)] For all $ j \in L$ it holds $|w_{0,1}| > \sum_{i \in P_{1,j}}w_{i,j} $, where $L$ denotes the set of leaves of $\Gamma_0$.
  \end{enumerate}
    \end{itemize}
\end{theorem}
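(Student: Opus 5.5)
We prove the two directions separately, using the decomposition $A = \tilde A + w_{0,1}\xi_n\xi_n^T$. Here $\tilde A$ is the matrix of tree structure built from the same tree with the root edge weight replaced by $0$. In other words, every entry satisfies $a_{i,j} = w_{0,1} + \sum w_{r,s}$, the sum running over the common edges of $P_{i,1}$ and $P_{j,1}$. The weights off the root edge are positive by (b), so along any path from vertex $1$ the diagonal entries increase, and the largest entries of $A$ are the $a_{j,j}$ with $j\in L$. Condition (c) says precisely that these leaf diagonal entries are negative.

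\emph{2.\ $\Rightarrow$ 1.} By (a)--(c) every entry of $A$ is negative. $\tilde A$ is a nonnegative matrix of tree structure with positive weights below the root edge, hence an ultrametric (in particular generalized ultrametric) matrix. Therefore $A$ is a \sgum\ with $\tau = -w_{0,1}$. All entries of $A$ are negative, so v) of Definition \ref{defessgen} holds, and no diagonal entry is positive, so no $1\times 1$ principal submatrix is a nonsingular $M$-matrix; thus $p=0$ and $A$ is of type $U^{(-1)}_{0,n}$. All weights are nonzero, so $A$ is nonsingular by Theorem \ref{theo:nonzero-w}. Theorem \ref{inverse} with $m=n$ then gives $A^{-1}\in L_{n-1}$, i.e.\ $A^{-1}$ is an $N_0$-matrix. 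Alternatively, one can cite Johnson's characterization: $A\le 0$ is irreducible, being entrywise negative, and $A^{-1}$ is a $Z$-matrix by Theorem \ref{kl1}, because $d_{i,j}=a_{i,i}w_{i,j}<0$ for each edge.

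\emph{1.\ $\Rightarrow$ 2.} By Theorem \ref{theo:nonzero-w} all weights are nonzero. Johnson's theorem applied to the $N_0$-matrix $A^{-1}$ gives $A \le 0$, so $a_{1,1}=w_{0,1}<0$, which is (a). For an edge $(i,j)$ with $i<j$ the tree structure gives $a_{i,j}=a_{i,i}$ and $a_{j,j}=a_{i,i}+w_{i,j}$, hence $d_{i,j}=a_{i,i}a_{j,j}-a_{i,j}^2 = a_{i,i}w_{i,j}$. Formula (\ref{formula-inv}) gives $(A^{-1})_{i,j} = -a_{i,i}/d_{i,j} = -1/w_{i,j}$. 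This entry must be nonpositive, and is nonzero since $G(A^{-1})=\Gamma$, so $w_{i,j}>0$, which is (b). This computation also shows $a_{i,i}\neq 0$ at every non-leaf vertex $i$. Finally, $A\le 0$ applied at the leaves gives $w_{0,1}+\sum_{P_{1,j}} w \le 0$ for every $j\in L$.

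The main obstacle is upgrading this last inequality to the strict inequality (c). The plan is to try to rule out a vanishing leaf diagonal entry using nonsingularity together with the sign pattern forced by Theorem \ref{equiformu} with $s=n-1$, under which all principal minors of $A$ must be nonpositive. However, the $2\times 2$ example $A=\left[\begin{array}{cc} w & w\\ w & 0\end{array}\right]$ with $w<0$ suggests that equality can actually occur. There $A$ is nonsingular, $A\le 0$, and $A$ is irreducible, so $A^{-1}$ is an $N_0$-matrix by Johnson's theorem, although $a_{2,2}=0$. So for this direction I expect only the non-strict version of (c) to be provable. The strict form is what is needed for strictness in the sense of Definition \ref{defessgen}, and it is used in the converse direction.
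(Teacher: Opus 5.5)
Your analysis is right: the obstacle you hit is a flaw in the statement itself, not in your argument.

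\textbf{The counterexample.} It is valid. For $w<0$, the matrix $A=\left[\begin{array}{cc} w & w\\ w & 0\end{array}\right]$ is symmetric, irreducible and of tree structure, with $w_{0,1}=w$ and $w_{1,2}=-w>0$. Its inverse $A^{-1}=\frac{1}{w}\left[\begin{array}{cc} 0 & 1\\ 1 & -1\end{array}\right]$ is a $Z$-matrix. With $t=-1/w$ one gets $B=tI-A^{-1}=\left[\begin{array}{cc} t & t\\ t & 0\end{array}\right]\geq 0$ and $\rho_1(B)=t<\rho_2(B)$, so $A^{-1}\in L_1$. Yet (c) holds only with equality. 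This is not a $2\times 2$ artifact: raising $w_{4,5}$ in Example 2 from $2$ to $3$ makes $a_{5,5}=0$ with the same effect.

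\textbf{The gap in the paper.} The paper's proof of $1\Rightarrow 2$ takes exactly the step you declined to take. From ``the entries of $A$ are also nonpositive'' it concludes ``Thus we need to have $w_{0,1}<0$ and 2(c)''. But nonpositivity of the leaf diagonal entries only yields $|w_{0,1}|\geq\sum_{P_{1,j}}w$. The last sentence of Theorem \ref{inverse} (with $m=n$) explains the boundary. A zero leaf diagonal entry is precisely the case $\rho_{n-1}(B)=t$, which $L_{n-1}$ still admits. The strict form of (c) describes the smaller class with $\rho_{n-1}(B)<t$, which I believe is Fan's class of $N$-matrices.

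\textbf{Comparison with the paper.} Otherwise your route is the paper's: Klein's formula, the sign of $d_{i,j}$, $A\leq 0$, and $A$ being a shifted generalized ultrametric matrix. You make it sharper with the identity $d_{i,j}=a_{i,i}w_{i,j}$. You also actually justify the $N_0$ conclusion via Theorem \ref{inverse} or Johnson's characterization, where the paper only asserts it.

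\textbf{Two corrections.}

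First, your closing remark that strictness is needed for the converse is wrong. Assume (a), (b) and the non-strict (c). Then only leaf diagonal entries can vanish. Every interior diagonal entry and every off-diagonal entry equals $a_{u,u}$ for some interior vertex $u$. Since the weights below $u$ are positive, it is strictly smaller than some leaf diagonal entry, hence negative. So $A\leq 0$ is irreducible, $d_{i,j}<0$ on every edge, and $A$ is still of type $U^{(-1)}_{0,n}$. Both of your arguments therefore give $A^{-1}\in L_{n-1}$. What you have actually proved is the corrected theorem with $\geq$ in (c), and that is the right statement.

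Second, in $1\Rightarrow 2$ you use (\ref{formula-inv}), which presupposes $a_{i,i}\neq 0$ at interior vertices, and then say the computation shows this. That is circular. Instead, obtain $(A^{-1})_{i,j}=-1/w_{i,j}$ on edges without any diagonal assumption. This identity holds for every nonsingular symmetric matrix of tree structure, by Theorem \ref{iff-2}. Directly: $A=ZWZ^T$, where $Z$ is the unit triangular ancestor-indicator matrix and $W$ is the diagonal matrix of weights. Then $Z^{-1}=I-P$ for the parent matrix $P$, which gives the edge entries of $A^{-1}$ immediately.
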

\begin{proof}
Note, 
  If  $A$ is nonsingular then with Theorem \ref{kl1} the off-diagonal entries different from zero are given  by
  \beqo
  (A^{-1})_{i,j} = -\frac{a_{i,j}}{d_{i,j}}.
  \eeqo
  If  $A^{-1}$ is an $N_0$-matrix, the off-diagonal entries are nonpositive, and the entries of $A$ are also nonpositive.  Thus we need to have  $w_{0,1} < 0$ and $2 (c)$. The $d_{i,j}$  must be negative. So
  \beqo
  d_{i,j} = a_{i,i}a_{j,j} -  a_{i,j} a_{j,i} < 0.
  \eeqo
  Now, assume w.l.o.g. that $i < j$. Then  due  to the tree structure we have
  \beqo
  a_{i,i} = a_{i,j} = a_{i,j}.
  \eeqo
  If there would be  a nonpositive weight  $w_{i,j}$, then
   \beqo
|a_{j,j}| > |a_{i,j}|,
  \eeqo
  which would  give 
\beqo
  d_{i,j} > 0,
  \eeqo
  which is a contradiction.
  Conversely, conditions (a) and (c) imply that $A < 0$.  Then conditions (a) and (b)  imply that the $d_{i,j}$  are negative. Moreover, $A$ is a shifted generalized ultrametric matrix and due to the tree structure no two rows or columns are the same. Hence  $A$ is nonsingular.  Thus $A^{-1}$ is a $Z$-matrix which is an $N_0$-matrix.
    \end{proof}

    It  is a bit more complicated to determine the weights of the tree  to obtain an inverse $Z$-matrix which belongs to other classes $L_s$. But  the next Theorem gives  a way to construct such a matrix. 
    
\begin{theorem} \label{theo:N-M}
  Let $ A \in \bRn$ be of tree structure of the form    
  \beq \label{eqrepn}
PAP^T =
\left[ \begin{array}{cc}
A_{1,1} & A_{1,2} \\
A_{2,1} & A_{2,2}
\end {array} \right] 
\eeq
where $A_{1,1} \in \bR ^{s,s} $ is an inverse $N_0$-matrix and $A_{2,2} \in \bR^{n-s,n-s}$ 
is an inverse $M$-matrix. The off diagonal blocks are $A_{1,2} = \tau\xi_s\xi^T_{n-s}$ and $A_{2,1} = \tau \xi_{n-s}\xi^T_{s}$, where 
$\tau = \tau(A_{1,1})$.
Then $A$ is nonsingular and $A^{-1}$ is an acyclic $Z$-matrix with 
$A^{-1} \in L_{s-1}$.
    \end{theorem}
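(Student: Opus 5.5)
The plan is to verify the sign pattern of the principal minors of $A$ and then apply Theorem \ref{equiformu}. Acyclicity and the $Z$-sign pattern of $A^{-1}$ will come from the tree structure of $A$.

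\textbf{Nonsingularity, acyclicity and the $Z$-pattern.} By Theorem \ref{invN0} applied to $A_{1,1}$, all entries of $A_{1,1}$ are negative, so $\tau=\tau(A_{1,1})<0$. By Theorem \ref{invM} applied to $A_{2,2}$, $A_{2,2}$ is a nonnegative ultrametric matrix of tree structure with positive weights. I would write
\[
PAP^T=\left[\begin{array}{cc} A_{1,1}-\tau\xi_s\xi_s^T & O\\ O & A_{2,2}-\tau\xi_{n-s}\xi_{n-s}^T\end{array}\right]+\tau\xi_n\xi_n^T .
\]
Both diagonal blocks are (generalized) ultrametric, so $A$ is a shifted generalized ultrametric matrix. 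Since $A$ is also of tree structure, the root edge carries the weight $\tau<0$, and all other weights are nonzero, because they are inherited from $A_{1,1}$ and $A_{2,2}$. Theorem \ref{theo:nonzero-w} then gives that $A$ is nonsingular. Theorems \ref{kl1} and \ref{kl2} (equivalently Theorem \ref{main-structure}) give that $A^{-1}$ is acyclic. The off-diagonal signs follow from formula (\ref{formula-inv}) exactly as in the proofs of Theorems \ref{invM} and \ref{invN0}: for an edge $(i,j)$ one checks $\mathrm{sign}(a_{i,j})=\mathrm{sign}(d_{i,j})$, treating separately the edges inside block 1, the edges inside block 2, and the connecting edge. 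Hence $A^{-1}$ is a $Z$-matrix.

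\textbf{Determinant.} Using the Schur complement with respect to $A_{2,2}$ and the matrix determinant lemma,
\[
\det A=\det A_{2,2}\,\det\bigl(A_{1,1}-c\,\xi_s\xi_s^T\bigr)=\det A_{2,2}\,\det A_{1,1}\,\bigl(1-c\,\xi_s^TA_{1,1}^{-1}\xi_s\bigr),\qquad c=\tau^2\xi_{n-s}^TA_{2,2}^{-1}\xi_{n-s}.
\]
The factors are signed as follows.
\begin{itemize}
\item $A_{2,2}^{-1}$ is an $M$-matrix, so $\det A_{2,2}>0$. Moreover $A_{2,2}^{-1}\geq0$ and is nonsingular, so $c>0$.
\item $A_{1,1}^{-1}$ is an $N_0$-matrix, so $\det A_{1,1}<0$ by Theorem \ref{detnew}.
\item $A_{1,1}^{-1}\le 0$ is irreducible, so $\xi_s^TA_{1,1}^{-1}\xi_s<0$ and the last factor is positive.
\end{itemize}
Thus $\det A<0$, hence $\det A^{-1}<0$, and we are in case a) of Theorem \ref{equiformu} with $s$ replaced by $s-1$. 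Case a) asks for two things about $A=(A^{-1})^{-1}$: all principal minors of order $\ge n-s+1$ are nonpositive, and some principal minor of order $n-s$ is positive. The second is immediate from $\det A_{2,2}>0$.

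\textbf{Principal minors of order $\ge n-s+1$ (main obstacle).} A principal submatrix $\tilde A$ of order $\ge n-s+1$ must use a nonempty index set $I_1$ from block 1; let $I_2$ be its index set from block 2. Then $\tilde A$ has the same two-block form, with blocks $B_1=A_{1,1}[I_1]$, $B_2=A_{2,2}[I_2]$ and off-diagonal blocks $\tau\xi\xi^T$. Two facts are needed.
\begin{itemize}
\item $B_2$ is a principal submatrix of a strictly ultrametric matrix, hence again strictly ultrametric, so $\det B_2>0$ and $B_2^{-1}\ge0$.
\item $B_1$ is a principal submatrix of a strictly shifted generalized ultrametric matrix of type $U^{(-1)}_{0,s}$. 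I will argue it is of type $U^{(-1)}_{0,|I_1|}$: it has negative entries, and its principal submatrices inherit the same negative-root tree structure. Theorem \ref{det} then gives $\det B_1\le0$, and Lemma \ref{strict} gives $\xi^TB_1^{-1}\xi\ge1/\omega(B_1)$ with $\omega(B_1)<0$, i.e.\ $\xi^TB_1^{-1}\xi<0$ whenever $B_1$ is nonsingular.
\end{itemize}
With these, the same Schur-complement computation as for $\det A$ gives $\det\tilde A\le 0$. If $B_1$ is singular, I would either pass to the Schur complement with respect to $B_2$ directly, or use continuity by perturbing the weights slightly. The delicate point I expect is exactly the claim that every principal submatrix of $A_{1,1}$ is again of type $U^{(-1)}_{0,\cdot}$ with $\xi^TB_1^{-1}\xi<0$. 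I would verify it by restricting the rooted tree to the vertices in $I_1$, which keeps the negative root weight dominating condition 2(c) of Theorem \ref{invN0}, and then applying Theorem \ref{invN0} to the restricted tree. This yields $A^{-1}\in L_{s-1}$.
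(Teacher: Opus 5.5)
Your outline is the paper's proof. You get nonsingularity from the nonzero tree weights and the $Z$-sign pattern from $\mathrm{sign}(a_{i,j})=\mathrm{sign}(d_{i,j})$ in Klein's formula. You then show $\det A<0$ and that all principal minors of order at least $n-s+1$ are nonpositive via a Schur complement, take the positive minor $\det A_{2,2}$, and conclude with case a) of Theorem \ref{equiformu}. The one real difference is the choice of block. You eliminate the $M$-block and expand $\det(B_1-c'\xi\xi^T)$ with the determinant lemma. The paper eliminates the $N_0$-block and uses Lemma \ref{strict} ii) to see that $\tilde A/\tilde A_{1,1}$ is a generalized ultrametric matrix with positive determinant. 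The two are mirror images.

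Several individual justifications, however, are false as written and must be replaced.

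\textbf{Matrices swapped with their inverses.} $A_{2,2}^{-1}$ and $B_2^{-1}$ are $M$-matrices, not nonnegative; it is $A_{2,2}$ that is $\geq 0$. Likewise $A_{1,1}^{-1}$ is an $N_0$-matrix, not nonpositive; it is $A_{1,1}$ that is $\leq 0$.

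\textbf{Sign of $\xi^TA_{2,2}^{-1}\xi$ and $\xi^TB_2^{-1}\xi$.} Nonnegativity of $A_{2,2}$ alone would not give $\xi^TA_{2,2}^{-1}\xi>0$, because a nonsymmetric $M$-matrix can have a negative entry sum. What works is symmetry: $A_{2,2}^{-1}$ is a symmetric nonsingular $M$-matrix, hence positive definite. So $A_{2,2}$ and every principal submatrix $B_2$ of it are positive definite, which gives $\det B_2>0$ and $\xi^TB_2^{-1}\xi>0$.

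\textbf{Sign of $\xi^TB_1^{-1}\xi$.} Lemma \ref{strict} gives $\omega(B_1)\,\xi^TB_1^{-1}\xi\geq 1$ with $\omega(B_1)=\tau(B_1)<0$, hence $\xi^TB_1^{-1}\xi\leq 1/\omega(B_1)<0$. You wrote the inequality the wrong way round, and with ``$\geq$'' your conclusion would not follow. The same argument with $B_1=A_{1,1}$ is the correct reason for $\xi^TA_{1,1}^{-1}\xi<0$.

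\textbf{Restricting the tree to $I_1$.} This does not in general give a matrix of tree structure in the paper's sense; for instance, if $I_1$ consists of two siblings, no row of $B_1$ is constant. So Theorem \ref{invN0} cannot be applied to $B_1$. You do not need it: $B_1$ is a principal submatrix of the shifted generalized ultrametric matrix $A_{1,1}$ with nonpositive entries, and that is all Theorem \ref{det} and Lemma \ref{strict} use.

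\textbf{Strictness.} $A_{1,1}$ and $A_{2,2}$ are never strictly (shifted) ultrametric, since a matrix of tree structure has a constant first row. Only the non-strict statements are needed, so this costs nothing.

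\textbf{Your fallback is the cleanest route.} $B_1-c'\xi\xi^T$ is again a shifted generalized ultrametric matrix with negative entries. Theorem \ref{det} therefore gives $\det(B_1-c'\xi\xi^T)\leq 0$ directly, strictly negative when it is nonsingular (for example when $\tilde A=A$). This needs neither $\xi^TB_1^{-1}\xi$ nor nonsingularity of $B_1$.
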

\begin{proof}
 It is obvious that $A$ is of tree structure and a shifted \gum \ with $det A < 0$. Moreover, $A$ is nonsingular since no two rows and columns are the same.
 By construction we have for all $i = 1, \ldots, s$ and $j = 1, \ldots, n$  and $i = s + 1, \ldots, n$ and $j = 1, \ldots, s$
 \beqo
 0 > d_{i,j} = a_{i,i}  a_{j,j} - a_{i,j} a_{j,i}.
 \eeqo
 But $a_{i,j} < 0$  for these $i,j$. For 
 for all $i = s + 1, \ldots, n$ and $j = s + 1, \ldots, n$ we have 
 \beqo
 d_{i,j} = a_{i,i}  a_{j,j} - a_{i,j} a_{j,i} > 0.
 \eeqo
 But $a_{i,j} >0$  for these indices.
 Thus $A^{-1}$ is an $Z$-matrix.
 
 If we consider principal submatrices $\tilde A$ of $A$ of order greater  then $n-s$ then $\tilde A$ has a similar  block structure as above with $\tilde A_{1,1}$ being an inverse N-matrix and $\tilde A_{2,2}$ being an shifted generalized ultrametric matrix with negative determinant (it has a negative diagonal entry). 
 Then the Schur-complement $A \backslash A_{1,1}$ is a generalized ultrametric matrix  which has a positive determinant, but   $\tilde A_{1,1}$ has a negative determinant. Thus  $det \tilde A < 0$. Moreover, $det A_{2,2} > 0$. 
 Thus   with Theorem \ref{equiformu} we obtain $A^{-1} \in L_{s-1}$.
\end{proof}

The proof of Theorem \ref{theo:N-M} can be generalized to get a more general result. The $A_{1,1}$ block in (\ref{eqrepn}) can be of arbitrary dimension and $A_{1,1}$ can be an arbitrary inverse $L_t$-matrix.  Similarly $A_{2,2}$ can be chosen arbitrarily.  Then choose the $\tau$ such that $A$ is of tree  structure. The matrix $A$ is then an inverse acyclic  
$Z$-matrix of a certain class. The class depends on the dimensions of $A_{1,1}$ and $A_{2,2}$ and the classes their inverses belong to. 
 
 In the Theorems above, we have considered symmetric shifted \gums.  But the next result shows that shifted generalized ultrametric matrices whose inverse is acyclic are almost symmetric. To describe the structure we define so-called  shifted generalized cyclopes.
 
 \begin{definition}
   Let $ A = [a_{ij}] \in \bRn$. $A$ is a shifted generalized cyclopes (with eye $m$), if there  exists a permutation matrix $P$, such that
    \beq \label{eq:cyc}
P^TAP =
\left[ \begin{array}{cc}
A_{1,1} & \tau E_{1,2} \\
\omega E_{2,1} & A_{2,2}
\end {array} \right], 
\eeq
where $A_{1,1} \in \bRmm$ and $A_{2,2} \in \bRnmnm$ are square nonsingular matrices of tree structure, 
 $E_{1,2}$ and $E_{1,2}$ are all ones matrices and nonzero $\tau, \omega \in \bR$.


$A$ is in normal block form, if 
\[ P = I, \quad  A_{1,1}\tilde e_m = a_{m,m}\xi_m \quad \mbox{and} \quad A_{2,2}\hat e_{m-n} = a_{m+1,m+1}\xi_{n-m}. \]
Here $\tilde e_m := [0, \ldots, 0, 1]^T   \in \bR^m$   and $   \hat  e_{m-n} = [1, 0, \ldots, 0]^T \in \bR^{m-n}.$  
 \end{definition}

The above definition  generalizes the definition of cyclopes given in \cite{McDNNST98}  for tridiagonal  matrices. Moreover, it generalizes the definition in \cite{MarMZ03} where only positive (nonnegative) matrices are considered. 

We immediately  obtain
\begin{lemma} \label{lem:cyc-schur}
Let $A$ be a nonsingular  shifted generalized cyclopes  in normal block form. Assume  that $A_{1,1}$ and $A_{2,2}$ are nonsingular, then the Schur-complements $A \slash A_{1,1}$ and  $A \slash A_{2,2}$ are of tree structure. 
 \end{lemma}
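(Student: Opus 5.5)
Because $A$ is in normal block form, the Schur complement is a rank-one update of a matrix of tree structure by a multiple of the all-ones matrix. I would write this out and then check that adding a constant to every entry keeps a matrix of tree structure. Concretely, $A/A_{1,1} = A_{2,2} - \omega E_{2,1}A_{1,1}^{-1}\tau E_{1,2} = A_{2,2} - \tau\omega(\xi_m^TA_{1,1}^{-1}\xi_m)\,\xi_{n-m}\xi_{n-m}^T$. Similarly, $A/A_{2,2} = A_{1,1} - \tau\omega(\xi_{n-m}^TA_{2,2}^{-1}\xi_{n-m})\,\xi_m\xi_m^T$. Both have the form $B + c\,\xi\xi^T$ with $B$ of tree structure and $c\in\bR$ a scalar.

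Next I would show that if $B$ is of tree structure with respect to a rooted tree $\Gamma_{(0)}$ in which the root $0$ is joined to a single vertex, then $B + c\,\xi\xi^T$ is again of tree structure. For $A_{2,2}$ the root is attached to its first vertex $m+1$, as in the construction of $\Gamma_{(0)}$ from $\Gamma$. Every path $P_{i,0}$ passes through the edge $e_{0,m+1}$, so every entry of $B$ contains the summand $w_{0,m+1}$. Replacing $w_{0,m+1}$ by $w_{0,m+1}+c$ therefore adds exactly $c$ to every entry, and the result is of tree structure with respect to the same tree with the modified root weight. This settles $A/A_{1,1}$.

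For $A/A_{2,2}$ the normal block form gives $A_{1,1}\tilde e_m = a_{m,m}\xi_m$, so the last column of $A_{1,1}$ is constant. By the Remark after Theorem \ref{iff-2}, this corresponds to the reversed ordering, in which the root edge is attached at vertex $m$. The identical argument, modifying the weight of that root edge, handles $A_{1,1} - c'\xi_m\xi_m^T$.

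The main obstacle is justifying that the root of $\Gamma_{(0)}$ has degree one, so that the shift is absorbed by a single edge weight. For the given orientation this holds by the construction of $\Gamma_{(0)}$. For the reversed orientation I would argue from the constant last column: $a_{i,m} = a_{m,m}$ for all $i$ means every path $P_{i,0}$ contains the whole of $P_{m,0}$. Hence $P_{m,0}$ is common to all root paths, and the total weight on it plays the role of $w_{0,1}$.

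A small additional point is that nonsingularity of $A_{1,1}$ and $A_{2,2}$ is needed only for the Schur complements to be defined; nonsingularity of the complements themselves follows from that of $A$.
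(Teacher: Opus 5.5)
Your proof is correct and follows the paper's route: the paper likewise writes each Schur complement as the diagonal block minus a scalar multiple of $\xi\xi^T$ and concludes at once that it is of tree structure, leaving implicit the step you make explicit, namely that the shift is absorbed into the weight of the single root edge. Your extra argument for $A_{1,1}$ is not really needed, since by the paper's construction $\Gamma_{(0)}$ always has a root of degree one; as stated it is also slightly loose, because $a_{i,m}=a_{m,m}$ forces $P_{m,0}\subseteq P_{i,0}$ only once you use that nonsingularity makes all weights nonzero, e.g.\ by comparing $a_{p,m}$ with $a_{m,m}$ for the parent $p$ of $m$.
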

\begin{proof}
 We have
 \beq \label{eq:cyc-schur}
A \slash A_{2,2} & = & A_{1,1} - A_{1,2}A^{-1}_{22}A_{2,1} = A_{1,1} - \tau \omega  \xi^T_{m-n}A^{-1}_{22}\xi_{m-n} \xi_m\xi_m^T. 
  \eeq
Thus  $A \slash A_{2,2}$ is of tree  structure. Similarly, one can prove that  $A \slash A_{1,1}$ is of tree  structure.
\end{proof}

The next Theorem  gives a characterization of the nonsingularity of a  shifted generalized cyclopes. 

\begin{theorem} Let $A$ be a nonsingular  shifted generalized cyclopes  in normal block form. Then $A$ is nonsingular if there is no zero row or column and no two rows or columns which are the same. 
 \end{theorem}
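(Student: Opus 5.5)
The plan is to factor $\det A$ through a Schur complement and reduce everything to the nonsingularity criterion for matrices of tree structure, Theorem \ref{theo:nonzero-w}. Necessity is immediate: a zero row or column, or two equal rows or columns, forces $\det A = 0$. For sufficiency, recall that $A_{2,2}$ is nonsingular by the definition of a shifted generalized cyclopes. Hence
\[
\det A = \det A_{2,2}\,\det (A\slash A_{2,2}),
\]
and it suffices to show that $A\slash A_{2,2}$ is nonsingular.

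By Lemma \ref{lem:cyc-schur} and (\ref{eq:cyc-schur}) we have $A\slash A_{2,2} = A_{1,1} - c\,\xi_m\xi_m^T$ with $c = \tau\omega\,\xi^T A_{2,2}^{-1}\xi$. Subtracting a multiple of $\xi_m\xi_m^T$ from a matrix of tree structure changes only the weight of the root edge, since that edge lies on every path $P_{i,0}$; all other weights are unchanged. Those other weights are the non-root weights of $A_{1,1}$, and they are nonzero because $A_{1,1}$ is nonsingular (Theorem \ref{theo:nonzero-w}). Thus, again by Theorem \ref{theo:nonzero-w}, $A\slash A_{2,2}$ is singular exactly when its root weight vanishes.

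The normal block form now makes both quantities explicit. The condition $A_{1,1}\tilde e_m = a_{m,m}\xi_m$ says that column $m$ of $A_{1,1}$ is constant. In tree structure this means vertex $m$ is the vertex attached to the root (reverse ordering, as in the remark after Theorem \ref{iff-2}), so the root weight of $A_{1,1}$ equals $a_{m,m}$. Likewise, $A_{2,2}\hat e = a_{m+1,m+1}\xi$ gives $A_{2,2}^{-1}\xi = \hat e / a_{m+1,m+1}$, provided $a_{m+1,m+1}\neq 0$. If $a_{m+1,m+1}=0$, then column $m+1$ of $A_{2,2}$ vanishes and $A_{2,2}$ would be singular, which is excluded. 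Hence $c = \tau\omega/a_{m+1,m+1}$, and the root weight of the Schur complement is $a_{m,m} - \tau\omega/a_{m+1,m+1}$. So
\[
A \mbox{ is singular} \iff a_{m,m}\,a_{m+1,m+1} = \tau\omega .
\]

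The main obstacle is the final translation of this scalar condition back into a statement about rows and columns. Using symmetry of tree-structure blocks, row $m$ of $A$ is $(a_{m,m},\ldots,a_{m,m},\tau,\ldots,\tau)$ and row $m+1$ is $(\omega,\ldots,\omega,a_{m+1,m+1},\ldots,a_{m+1,m+1})$. Under the singularity condition, row $m+1$ equals $(\omega/a_{m,m})$ times row $m$. This gives genuine equality only when $\omega = a_{m,m}$ (equivalently $\tau = a_{m+1,m+1}$), and it gives a zero row in the degenerate case. I therefore expect that the hypothesis ``no two rows the same'' must be read as ``no two proportional rows or columns'', or else be supplemented by a normalization such as $a_{m,m}=\omega$. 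I would try first to see whether the normal block form implicitly forces that normalization; otherwise I would state the result with the condition $a_{m,m}a_{m+1,m+1}\neq\tau\omega$, equivalently that rows $m$ and $m+1$ are not proportional.
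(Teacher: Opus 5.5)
Your analysis is correct. What it uncovers is a defect in the statement and in the paper's proof, not in your argument. Read literally, with ``nonsingular'' among the hypotheses, the statement is a tautology. The paper's proof shows that the intended content is the sufficiency direction of an ``if and only if'', and that direction is false.

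The paper follows the same skeleton as you. It passes to the Schur complement $A/A_{1,1}$, uses Lemma~\ref{lem:cyc-schur} to say it is of tree structure, and then simply asserts that (\ref{eq:cyc-schur}) makes it nonsingular. Your root-weight bookkeeping shows why this step cannot be completed:
\begin{itemize}
\item By the normal block form, $\xi_m^TA_{1,1}^{-1}\xi_m=1/a_{m,m}$, so $A/A_{1,1}=A_{2,2}-(\tau\omega/a_{m,m})\,\xi_{n-m}\xi_{n-m}^T$.
\item This is of tree structure with root weight $a_{m+1,m+1}-\tau\omega/a_{m,m}$. All its other weights are those of $A_{2,2}$, hence nonzero.
\item By Theorem~\ref{theo:nonzero-w}, $A$ is singular exactly when $a_{m,m}a_{m+1,m+1}=\tau\omega$. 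In that case the Schur complement acquires a zero row, while $A$ itself only has rows (and columns) $m$ and $m+1$ proportional, which the hypothesis does not rule out.
\end{itemize}

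To settle the question you left open: the normal block form does not force $a_{m,m}=\omega$. Take $n=4$, $m=2$, $\tau=\omega=2$ and
\[
A=\left[\begin{array}{cccc}
3&1&2&2\\
1&1&2&2\\
2&2&4&4\\
2&2&4&6
\end{array}\right].
\]
Here $A_{1,1}$ is of tree structure for the rooted path with edges $\{0,2\},\{2,1\}$ of weights $1,2$. $A_{2,2}$ is of tree structure for the rooted path with edges $\{0,3\},\{3,4\}$ of weights $4,2$. Both blocks are nonsingular. The last column of $A_{1,1}$ equals $a_{2,2}\xi_2$ and the first column of $A_{2,2}$ equals $a_{3,3}\xi_2$, so $A$ is in normal block form.

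This matrix has no zero row and no two equal rows or columns. Yet row $3$ is twice row $2$, so $A$ is singular; indeed $A/A_{1,1}=\left[\begin{array}{cc}0&0\\0&2\end{array}\right]$. Already $\left[\begin{array}{cc}1&2\\2&4\end{array}\right]$ with $m=1$ is a counterexample.

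So the correct result is the one you propose: $A$ is nonsingular if and only if $a_{m,m}a_{m+1,m+1}\neq\tau\omega$, i.e.\ rows (equivalently columns) $m$ and $m+1$ are not proportional. The paper's wording can be recovered under additional sign information. For instance, if $A$ is a generalized ultrametric matrix, then $a_{m,m}\ge\omega>0$ and $a_{m+1,m+1}\ge\tau>0$, so proportionality of these two rows forces equality.

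Two minor points in your write-up need fixing:
\begin{itemize}
\item The claim that a constant column $m$ makes vertex $m$ the child of the root needs the nonzero weights. If $m$ had a parent $p\neq0$, then $a_{p,m}=a_{p,p}$ while $a_{m,m}=a_{p,p}+w_{p,m}$. Constancy of the column then forces $w_{p,m}=0$, contradicting nonsingularity of $A_{1,1}$.
\item The ``degenerate case'' producing a zero row cannot occur. We have $\omega\neq0$, and $a_{m,m}$, the root weight of the nonsingular block $A_{1,1}$, is nonzero.
\end{itemize}
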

\begin{proof}
 Obviously  we just have to prove one implication. So assume that there are no zero rows and columns and no two rows and columns which are the same. But then with Theorem \ref{theo-tree-row-2} $A_{1,1}$ and $A_{2,2}$ are nonsingular. 
 Using  Lemma  \ref{lem:cyc-schur} the Schur-complement  $A \slash A_{1,1}$  is of tree structure. But  (\ref{eq:cyc-schur}) then gives that $A \slash A_{1,1}$  is also nonsingular. 
\end{proof}

Next we state a characterization of  shifted generalized cyclopes with eye $m$.

\begin{theorem} \label{lem:cyc}
 Let $A \in \bRn$ be a nonsingular. Then  the following are equivalent:
 \begin{enumerate}
  \item $A$ is a  shifted generalized cyclopes  in normal block form  with eye $m$.
 \item $A^{-1} = C$ is irreducible acyclic  and there exists the  edge $(m,m+1)$ in the tree of $A^{-1}$ such that $(\xi_n^TA^{-1})_j$ and $(A^{-1}\xi_n)_j$ are zero for all 
  $ j \in \{1, \ldots, n\} \backslash \{m,m+1\}$. Moreover, $A^{-1}(1:m)$ and $A^{-1}(m+1:n)$ are nonsingular.
   \end{enumerate}
  \end{theorem}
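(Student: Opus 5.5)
Both directions go through block inversion with respect to the partition $\{1,\ldots,m\}\cup\{m+1,\ldots,n\}$, combined with Theorem \ref{iff-2} and its reversed-ordering variant from the Remark after it. In normal block form, $A_{1,1}\tilde e_m = a_{m,m}\xi_m$ says that the last column (and, by tree structure, the last row) of $A_{1,1}$ is constant. In the reversed numbering this is exactly situation (2) of Theorem \ref{iff-2}. Hence $A_{1,1}^{-1}$ is acyclic, with $A_{1,1}^{-1}\xi_m = \gamma_1\tilde e_m$ and $\xi_m^TA_{1,1}^{-1} = \gamma_1\tilde e_m^T$. Likewise $A_{2,2}\hat e_{n-m} = a_{m+1,m+1}\xi_{n-m}$ gives $A_{2,2}^{-1}$ acyclic, with $A_{2,2}^{-1}\xi_{n-m} = \gamma_2\hat e_{n-m}$ and $\xi_{n-m}^TA_{2,2}^{-1} = \gamma_2\hat e_{n-m}^T$.

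\textbf{(1) $\Rightarrow$ (2).} Take the partitioned inverse of $A$. By Lemma \ref{lem:cyc-schur}, the Schur complement $S_1 := A/A_{2,2} = A_{1,1} - \tau\omega\gamma_2\,\xi_m\xi_m^T$ is a rank-one shift of $A_{1,1}$. It therefore again has constant last row and column, and is of tree structure with only the root weight changed. Assuming $S_1$ is nonsingular, Theorem \ref{iff-2} makes $(A^{-1})_{1,1} = S_1^{-1}$ acyclic on the same tree as $A_{1,1}^{-1}$, with row and column sums vanishing except at index $m$. The same holds for $(A^{-1})_{2,2} = S_2^{-1}$, with exceptional index $m+1$. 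The off-diagonal block is $-A_{1,1}^{-1}A_{1,2}S_2^{-1} = -\tau A_{1,1}^{-1}\xi_m\xi_{n-m}^TS_2^{-1}$, which by the sum identities is a multiple of $\tilde e_m\hat e_{n-m}^T$. So it is supported only on entry $(m,m+1)$, and the $(2,1)$ block is treated symmetrically. Consequently $G(A^{-1})$ consists of the two trees joined by the edge $(m,m+1)$, so $A^{-1}$ is irreducible and acyclic. The row and column sums of $A^{-1}$ at every $j\neq m,m+1$ equal those of the diagonal blocks, which vanish. The principal blocks $A^{-1}(1:m) = S_1^{-1}$ and $A^{-1}(m+1:n) = S_2^{-1}$ are nonsingular. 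Nonsingularity of $S_1$ and $S_2$ themselves follows from $\det A = \det A_{2,2}\det S_1 \neq 0$ and the analogous identity with $A_{1,1}$.

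\textbf{(2) $\Rightarrow$ (1).} Let $C = A^{-1}$ and write $C = \begin{bmatrix} C_{1,1} & C_{1,2}\\ C_{2,1} & C_{2,2}\end{bmatrix}$. Deleting the edge $(m,m+1)$ from the tree splits it into two subtrees, which I will assume are the index sets $\{1,\ldots,m\}$ and $\{m+1,\ldots,n\}$; this should follow from the dfs numbering together with the stated block form. Then $C_{1,2} = \alpha\,\tilde e_m\hat e_{n-m}^T$ and $C_{2,1} = \beta\,\hat e_{n-m}\tilde e_m^T$. Because the row and column sums of $C$ vanish off $\{m,m+1\}$, the blocks $C_{1,1}$ and $C_{2,2}$ have row and column sums that vanish except at $m$ and at $m+1$, respectively. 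Both blocks are nonsingular by hypothesis. Theorem \ref{iff-2} (reversed ordering for $C_{1,1}$) then shows that $C_{1,1}^{-1}$ and $C_{2,2}^{-1}$ are of tree structure with constant last, respectively first, row and column. By block inversion, $A_{1,1} = (C/C_{2,2})^{-1}$, and $C/C_{2,2} = C_{1,1} - \alpha\beta\,(C_{2,2}^{-1})_{1,1}\,\tilde e_m\tilde e_m^T$ differs from $C_{1,1}$ only in entry $(m,m)$. It is therefore still acyclic, with row and column sums zero except at $m$, and Theorem \ref{iff-2} applies again: $A_{1,1}$ is of tree structure with $A_{1,1}\tilde e_m = a_{m,m}\xi_m$. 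The treatment of $A_{2,2}$ is symmetric. Finally, $A_{1,2} = -C_{1,1}^{-1}C_{1,2}(C/C_{1,1})^{-1} = -\alpha\,(C_{1,1}^{-1}\tilde e_m)(\hat e_{n-m}^T(C/C_{1,1})^{-1})$ is a product of two constant vectors, hence equal to $\tau E_{1,2}$, and likewise $A_{2,1} = \omega E_{2,1}$. Here $\tau$ and $\omega$ are nonzero because $\alpha,\beta\neq0$ (irreducibility).

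The main obstacle is keeping the exceptional-row bookkeeping straight: the argument needs the nonsingularity of the Schur complements $C/C_{1,1}$ and $C/C_{2,2}$ (which follows from $\det C\neq0$ and the hypothesis on the principal blocks), and it must verify that the rank-one diagonal correction preserves the zero-sum pattern required by Theorem \ref{iff-2}.
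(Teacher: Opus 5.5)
Your argument is essentially the paper's proof. It uses block inversion along $\{1,\ldots,m\}\cup\{m+1,\ldots,n\}$, applies Theorem~\ref{iff-2} (reversed form for the first block) to the diagonal blocks and to the rank-one-corrected Schur complements, and observes that the off-diagonal blocks are products of a multiple of a unit vector and a constant vector. The only real deviation is in $(2)\Rightarrow(1)$. There you apply Theorem~\ref{iff-2} directly to $C/C_{2,2}$, which differs from $C_{1,1}$ only in the $(m,m)$ entry. The paper instead uses Sherman--Morrison to get $(C/C_{2,2})^{-1}=C_{1,1}^{-1}-\gamma_1\xi_m\xi_m^T$. Both work, and you justify the nonsingularity of the Schur complements more carefully than the paper does. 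One detail to record, as the paper does: Theorem~\ref{iff-2} needs the exceptional row sum and column sum to coincide, which holds because both equal $\xi_m^TC_{1,1}\xi_m$.

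The assumption you flag in $(2)\Rightarrow(1)$, however, does not follow from the dfs numbering. There the edge $(m,m+1)$ only says that $m+1$ is a child of $m$, and the branch starting at $m+1$ need not exhaust $\{m+1,\ldots,n\}$. For example, take the dfs-numbered tree with edges $\{1,2\},\{2,3\},\{1,4\}$, let $m=2$, and set
\[
C=\left[\begin{array}{rrrr} 2&-1&0&-1\\ -1&3&-1&0\\ 0&-1&1&0\\ -1&0&0&1\end{array}\right],
\qquad
A=C^{-1}=\left[\begin{array}{rrrr} 2&1&1&2\\ 1&1&1&1\\ 1&1&2&1\\ 2&1&1&3\end{array}\right].
\]
Then $C$ meets every requirement of (2): the row and column sums vanish at $j=1,4$, $\det C(1\!:\!2)=5$, and $C(3\!:\!4)=I$. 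Yet $A_{1,2}$ is not a multiple of the all-ones matrix, so (1) fails. The paper's proof makes exactly the same unproved step (``since $C$ is irreducible we have $C_{1,2}=\alpha\tilde e_m\hat e_{n-m}^T$''). The statement therefore has to be read with the extra hypothesis that $(m,m+1)$ is the only tree edge joining $\{1,\ldots,m\}$ to $\{m+1,\ldots,n\}$. With that hypothesis stated explicitly, your proof is complete.
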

\begin{proof}
Note that we use the notation
\[ \tilde e_t := [0, \ldots, 0, 1]^T   \in \bR^t  \quad \mbox{and} \quad   \hat  e_t = [1, 0, \ldots, 0]^T \in \bR^t   . \] 

First prove {\bf 1. $\rightarrow $ 2. }  We have
   \beqo
A =
\left[ \begin{array}{cc}
A_{1,1} & \tau E_{1,2} \\
\omega E_{2,1} & A_{2,2}
\end {array} \right], 
\eeqo
where $A_{1,1} \in \bRmm$ and $A_{2,2} \in \bRnmnm$ are square nonsingular matrices of tree structure, 
 $E_{1,2}$ and $E_{1,2}$ are all ones matrices and nonzero $\tau, \omega \in \bR$. 
 As in Lemma \ref{lem:cyc-schur}  the Schur-complements are 
 \beqo
 A \slash A_{2,2} & = & A_{1,1} - A_{1,2}A^{-1}_{22}A_{2,1} = A_{1,1} - \tau \omega  \xi^T_{m-n}A^{-1}_{22}\xi_{m-n} \xi_m\xi_m^T,\\
  A \slash A_{1,1} & = &  A_{2,2} - A_{2,1}A^{-1}_{11}A_{1,2} = A_{2,2} - \tau \omega  \xi^T_{m}A^{-1}_{11}\xi_{m} \xi_{n-m}\xi_{n-m}^T.
   \eeqo
 Moreover, they are nonsingular and of tree structure.
 
 Then using the well-know formula for the inverse of a block $2 \times 2$ matrix  we get 
\beqo
 A^{-1} = 
\left[ \begin{array}{cc}
  (A \slash A_{2,2})^{-1}   &   A^{-1}_{11} A_{1,2} (A \slash A_{1,1})^{-1}  \\
 (A \slash A_{1,1})^{-1}A_{2,1}A^{-1}_{11}    &  (A \slash A_{1,1})^{-1}
\end {array} \right].
\eeqo
Since $(A \slash A_{2,2})$ is of tree structure and so $(A \slash A_{2,2})\tilde e_m = \gamma_2 \xi_m$ holds for some $\gamma_2 \in \bR$, we have
\[
 (A \slash A_{2,2})^{-1} \xi_m = \frac{1}{\gamma_2} \tilde e_m.
\]
Similarly
\[
 (A \slash A_{1,1})^{-1} \xi_{n-m} = \frac{1}{\gamma_1} \hat e_{n-m}.
\]
Moreover, $A_{1,1}^{-1}\xi_m = \alpha_2 \tilde e_m$  for some $\alpha_2 \in \bR$.
Hence,  
\[
  A^{-1}_{11} A_{1,2} (A \slash A_{1,1})^{-1} =  A^{-1}_{11}\tau \xi_m \xi_{m-n}^T (A \slash A_{1,1})^{-1} = \frac{\tau \alpha_2}{\gamma_1} \tilde e_m \hat e_{n-m}^T = \beta_1 \tilde e_m \hat e_{n-m}^T, 
\]
for some $\beta_1 \in \bRn$.
Similarly, 
\[
 (A \slash A_{1,1})^{-1}A_{2,1}A^{-1}_{11} = \beta_2 \hat e_{n-m} \tilde e_m^T,
\]
for some $\beta_2 \in \bR$.
Thus, $A^{-1}$ is acyclic and we have 
$(\xi_n^TA^{-1})_j$ and $(A^{-1}\xi_n)_j$ are zero for all 
  $ j \in \{1, \ldots, n\} \backslash \{m,m+1\}$.

 For   {\bf 2. $\rightarrow$  1.}
consider the block partition  of  $A^{-1}$: 
\beqo
 A^{-1} = C  =
\left[ \begin{array}{cc}
 C_{1,1} &   C_{1,2}  \\
 C_{2,1}  &  C_{2,2} 
\end {array} \right],
\eeqo
where $C_{1,1} \in \bRmm$ and $C_{2,2} \in \bRnmnm$. 
Since $C$ is irreducible we have 
\beqo
 C_{1,2} =
\left[ \begin{array}{cc}
 0 &   0 \\
 \alpha &  0 
\end {array} \right] = \alpha \tilde e_m \hat e_{n-m}^T, \quad \mbox{and} \quad C_{2,1} =
\left[ \begin{array}{cc}
 0 &   \beta \\
 0 &  0 
\end {array} \right] = \beta \hat e_{n-m} \tilde e_m^ T
\eeqo
for some nonzero $\alpha, \beta  \in \bR $. 
We have 
\beqo
C\xi_n = \left[ \begin{array}{c}
C_{1,1}\xi_m + \alpha \tilde e_m   \\
C_{2,2}\xi_{n-m} + \beta \hat e_{n-m}
\end {array} \right] =: 
\left[ \begin{array}{c}
\sigma_m \tilde e_m   \\
\sigma_{n-m} \hat e_{n-m}
\end {array} \right], 
\eeqo
\beqo
\quad \xi_n^TC = [ \xi_m^TC_{1,1}+ \beta \tilde e_{n-m}^T, \xi_{n-m}^TC_{2,2} + \alpha \hat e_m^T] =: [\mu_m \tilde e_m^T, \mu_{m-n} \hat e_{m-n}^T].
\eeqo
Thus
\beq \label{eq:row}
 C_{1,1} \xi_m +  \alpha \tilde e_m = \sigma_m \tilde e_m \quad \mbox{or} \quad C_{1,1} \xi_m = (\sigma_m - \alpha) \tilde e_m.
\eeq
Similarly as above we get 
\beq \label{eq:col}
 \xi_m^T C_{1,1} = (\mu_m - \beta) \tilde e_m ^T.
\eeq
So 
\[
 \xi_m^T C_{1,1}\xi_m = (\sigma_m - \alpha) = (\mu_m - \beta).
\]
Hence, since $C_{1,1}  = A^{-1}(1:m)$ is acyclic and nonsingular, Theorem \ref{iff-2}  gives that   $C_{1,1}^{-1}$ is of tree structure. 
Similarly one can prove that $C_{2,2}  = A^{-1}(m+1:n)$
is of tree  structure. 
Next partition $A$ as $C$. So we have
\beqo
 A   =
\left[ \begin{array}{cc}
 A_{1,1} &   A_{1,2}  \\
 A_{2,1}  &  A_{2,2} 
\end {array} \right].
\eeqo
With the Schur-complements
 \beqo
 C \slash C_{2,2} = C_{1,1} - C_{1,2}C^{-1}_{22}C_{2,1},  \quad 
  C \slash C_{1,1} =   C_{2,2} - C_{2,1}C^{-1}_{11}C_{1,2}
   \eeqo
we get 
\beqo
 C^{-1} = 
\left[ \begin{array}{cc}
  (C \slash C_{2,2})^{-1}   &   C^{-1}_{11} C_{1,2} (C \slash C_{1,1})^{-1}  \\
 (C \slash C_{1,1})^{-1}C_{2,1}C^{-1}_{11}    &  (C \slash C_{1,1})^{-1}
\end {array} \right].
\eeqo


But 
\beqo
C \slash C_{2,2} = C_{1,1} - \alpha \tilde e_m \hat e_{m-n}^T C_{2,2}^{-1}\beta \hat e_{m-n} \tilde e_m^T. 
\eeqo
However,  $ \hat e_{m-n}^T C_{2,2}^{-1} \hat e_{m-n}$ is just the first diagonal entry of $C_{2,2}^{-1} =: c_{22}$. 
Thus 
\beqo
C \slash C_{2,2} = C_{1,1} - \alpha \beta c_{22}  \tilde e_m \tilde e_m^T. 
\eeqo
Using the  Sherman-Morrison formula and the fact  that the entries of the last  row and column of $C_{1,1}^{-1}$ are all  the same (see (\ref{eq:row}) and (\ref{eq:col})), we get 
\[
(C \slash C_{2,2})^{-1} = C_{1,1}^{-1} - \gamma_1 \xi_m \xi_m^T,
\]
for some $\gamma_1 \in \bR$. But $C_{1,1}^{-1}$ is of tree structure, so the same holds for $(C \slash C_{2,2})^{-1}$. 
Similarly we can show that
\[
 (C \slash C_{1,1})^{-1} = C_{2,2}^{-1} - \gamma_2 \xi_{m-n} \xi_{m-n}^T,
\]
and thus $  (C \slash C_{1,1})^{-1}$ is of tree structure. 
 Next consider $A_{1,2}$. We obtain with (\ref{eq:row})
 \beqo
 A_{1,2} & =  &  C^{-1}_{11} C_{1,2} (C \slash C_{1,1})^{-1}  C^{-1}_{11} \alpha \tilde e_m \hat e_{m-n}^T ( C_{2,2}^{-1} - \gamma_2 \xi_{m-n} \xi_{m-n}^T) =  \delta_1 \xi_m \xi_{m-n}^T,
 \eeqo
 for some $\delta_1 \in \bR$. 
 
Similarly 
\beqo
 A_{2,1} = \delta_2 \xi_{m-n} \xi_m^T. 
 \eeqo 
 Hence, $A$ is a shifted generalized cyclopes.
  \end{proof}

 
 Using the above facts we get the following Theorem.

  \begin{theorem} \label{theo:cyclopes}
  Let $ A = [a_{ij}] \in \bRn$, with $a_{ij} \neq 0$, be a nonsingular shifted generalized ultrametric  matrix in nested block form 
  \beq \label{eq:block}
A=
\left[ \begin{array}{cc}
A_{1,1} & A_{1,2} \\
A_{2,1} & A_{2,2}
\end {array} \right].
\eeq  
Then  the following  are equivalent.
\begin{enumerate}
 \item
 $A$ is a shifted generalized cyclopes,
 \item 
 $A^{-1}$  is acyclic.
\end{enumerate}
\end{theorem}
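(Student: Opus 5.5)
The plan is to reduce the statement to Theorem \ref{lem:cyc}, using the nested block form of Theorem \ref{nested}. By Theorem \ref{nested}, after permutation,
\[
A=\left[\begin{array}{cc} A_{1,1} & \tau(A)\xi_r\xi_{n-r}^T \\ \omega(A)\xi_{n-r}\xi_r^T & A_{2,2}\end{array}\right],
\]
where $A_{1,1}$ and $A_{2,2}$ are shifted generalized ultrametric matrices. Because all $a_{ij}\neq 0$, both $\tau:=\tau(A)$ and $\omega:=\omega(A)$ are nonzero. The off-diagonal blocks therefore already have the shape required of a shifted generalized cyclopes with eye $m=r$. What remains in each direction is to decide whether the diagonal blocks are of tree structure and nonsingular, and whether the normal-block-form conditions hold.

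For \textbf{1.\ $\rightarrow$ 2.}, assume $A$ is a shifted generalized cyclopes. The first step is to show that the cyclopes partition can be taken in normal block form. Each diagonal block of tree structure has a "root" row and column that is constant (Theorem \ref{iff-2} and the remark following it). By permuting inside each block, this constant row/column can be placed at positions $m$ and $m+1$, respectively; this relabelling of the tree vertices keeps the tree structure. With the normal block form in hand, the direction $1.\rightarrow 2.$ of Theorem \ref{lem:cyc} gives that $A^{-1}$ is irreducible and acyclic.

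For \textbf{2.\ $\rightarrow$ 1.}, assume $A^{-1}$ is acyclic. The goal is to verify item 2 of Theorem \ref{lem:cyc} for the partition from Theorem \ref{nested}.
\begin{enumerate}
\item Use the block inverse formula with Schur complements, as in the proof of Theorem \ref{lem:cyc}. The off-diagonal blocks of $A^{-1}$ are $A_{1,1}^{-1}\tau\xi_r\xi_{n-r}^T(A/A_{1,1})^{-1}$ and its analogue, so each has rank one.
\item Since $G(A^{-1})$ is a tree and $A^{-1}$ is irreducible, exactly one edge of the tree joins the two index sets. A rank-one block with only one nonzero entry forces $A_{1,1}^{-1}\xi_r$ to be a multiple of a unit vector $e_p$ and $\xi_{n-r}^T(A/A_{1,1})^{-1}$ to be a multiple of a unit vector $e_q^T$.
\item Consequently the row and column sums of $A^{-1}$ vanish outside the two endpoints $p,q$ of the connecting edge. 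After renumbering so that $p=m$ and $q=m+1$, this is exactly the condition in item 2 of Theorem \ref{lem:cyc}.
\item The diagonal blocks of $A^{-1}$ are the inverses of the Schur complements, so they are nonsingular because $A/A_{2,2}$ and $A/A_{1,1}$ are. Theorem \ref{lem:cyc} then yields that $A$ is a shifted generalized cyclopes.
\end{enumerate}

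The expected main obstacle is step 2 of the converse: justifying that the acyclic structure forces $A_{1,1}^{-1}\xi_r$ to be a unit-vector multiple. An off-diagonal block of $A^{-1}$ could a priori be zero, but this is excluded because $\tau\omega\neq0$ and $A^{-1}$ is irreducible. It must also be shown that $A_{1,1}$ and $A/A_{1,1}$ are nonsingular. For $A_{1,1}$ this comes from Theorem \ref{nonsing}: the rows of the block are distinct because the rows of $A$ are, and the block cannot be forced singular since all its entries are nonzero. For the Schur complement, nonsingularity follows from that of $A$ and $A_{1,1}$. If some diagonal block turns out singular, the fallback is to choose a different nested split: descend one level in the recursion of Theorem \ref{nested} until both blocks are nonsingular.
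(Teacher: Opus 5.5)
Your direction $1\Rightarrow 2$ is the paper's argument: bring the cyclopes into normal block form, then apply Theorem~\ref{lem:cyc}. The converse breaks at your step~2, because a tree can have several edges across a vertex bipartition. Suppose the nested blocks $\{1,\ldots,r\}$ and $\{r+1,\ldots,n\}$ induce $c_1$ and $c_2$ components of $G(A^{-1})$. Then exactly $c_1+c_2-1$ edges cross between them. Acyclicity only forbids a $2\times 2$ nonzero pattern in the rank-one block $-\tau A_{1,1}^{-1}\xi_r\xi_{n-r}^T(A/A_{1,1})^{-1}$, so that block may be a star rather than a single entry. Concretely, take
\[
A=\left[\begin{array}{rrr} 3&1&-2\\ 1&2&-2\\ -1&-1&2\end{array}\right],\qquad
A^{-1}=\frac{1}{4}\left[\begin{array}{rrr} 2&0&2\\ 0&4&4\\ 1&2&5\end{array}\right].
\]
This matrix satisfies every hypothesis of the statement:
\begin{itemize}
\item all entries of $A$ are nonzero and $\det A=4$;
\item $A+2\xi_3\xi_3^T$ is a strictly generalized ultrametric matrix;
\item $\tau(A)=-2$ and $\omega(A)=-1$, so the nested split is $\{1,2\}\,|\,\{3\}$ with $A_{1,1}=\left[\begin{array}{rr}3&1\\1&2\end{array}\right]$.
\end{itemize}
Here $G(A^{-1})$ is the path $1$--$3$--$2$, and both of its edges cross the split. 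Also $A_{1,1}^{-1}\xi_2=\frac{1}{5}[1,2]^T$ is not a multiple of a unit vector, and no row or column sum of $A^{-1}$ vanishes. So item~2 of Theorem~\ref{lem:cyc} is not available.

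This cannot be repaired inside your argument, because this $A$ is not a shifted generalized cyclopes at all. The only split with constant off-diagonal blocks is $\{1,2\}\,|\,\{3\}$. But $A_{1,1}$ is not of tree structure: in a $2\times 2$ matrix of tree structure the off-diagonal entry is the root weight $w_{0,1}$, which is also a diagonal entry, and here $1\notin\{2,3\}$. So the implication $2\Rightarrow 1$ is false as stated.

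The paper takes a different route, using the factorization $A=DUF$ from Theorem~\ref{main-structure} and the symmetry of $U$. That only shows $A_{1,1}$ and $A_{2,2}$ are symmetric, which is also true in this example, so its final step ``symmetric, hence of tree structure'' has the same hole.

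To get a correct result you need an extra hypothesis. For example, assume that exactly one edge of $G(A^{-1})$ joins the two nested blocks (equivalently, both induced subgraphs are connected) and that $A_{1,1}$ and $A_{2,2}$ are nonsingular. Under that hypothesis your steps 2--4 and Theorem~\ref{lem:cyc} do go through.

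Separately, Theorem~\ref{nonsing} does not give nonsingularity of $A_{1,1}$ as you claim. It covers only generalized ultrametric matrices and shifted ones with a negative diagonal entry. The matrix $\left[\begin{array}{rr}1&-1\\-1&1\end{array}\right]$ is a singular shifted generalized ultrametric matrix with nonzero entries and distinct rows.
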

\begin{proof}
 {\bf 1. $\rightarrow$ 2.} We can assume that $A$ is in normal block form. Then Theorem  \ref{lem:cyc} gives the result. 
 
{\bf 2.  $\rightarrow $ 1.} 
Since $A^ {-1}$ is acyclic, we have with Theorem \ref{main-structure} 
\beqo
A = T \circ U \circ R = DUF,
\eeqo
where $U$ is of tree structure, $ D = diag(A\hat e_n)$ and $F = diag(\hat e_n^TA)$. So the diagonal entries of $D$ are just the entries of the first column of $A$, while the diagonal entries of $F$ are just the entries of the first row of $A$. Moreover
\beqo
U = D^{-1} AF^{-1}
\eeqo
is symmetric. But  we have with 
 \beqo
 D^{-1}  =:
\left[ \begin{array}{cc}
 D_{11} & 0 \\
 0 & (\omega (A))^{-1}I 
\end {array} \right]  \quad 
F^{-1} =:
\left[ \begin{array}{cc}
 F_{11} & 0 \\
 0 & (\tau (A))^{-1}I 
\end {array} \right]
\eeqo
that 

\beqo
 U =
\left[ \begin{array}{cc}
 D_{11}  A_{1,1}  F_{11} &  D_{11} E_{12}  \\
 E_{21} F_{11}  & (\omega (A))^{-1} (\tau (A))^{-1} A_{2,2}
\end {array} \right].
\eeqo
Thus $A_{2,2}$ must be symmetric. 
Moreover, 
\[
 D_{11} E_{12}  =  (E_{21} F_{11})^T.
\]
But  due to the nested block structure, this leads  to 
\[
 \omega (A_{1,1}) =  \tau (A_{1,1}).
\]
Inductively, we obtain 
\[
 D_{11} = F_{11},
\]
which gives
\[
  A_{1,1}  =  A_{1,1}^T.
\]
Hence, $A$ is a shifted generalized cyclopes. 
\end{proof}

Theorem \ref{theo:cyclopes} generalizes Theorem 3.6 ((i) and (ii)) of \cite{MarMZ03} from \gums \  to shifted \gums. Moreover, note  that  our proof is much shorter.




 
 
 
 
 
    
\section{Examples}
Here  we give some examples which illustrate the results  of the previous section. 

\begin{example} \label{ex1}
{\rm For illustration consider the weighted rooted tree in Figure 1.} 
{\rm 
\noindent
\unitlength0.8cm
\begin{center}
\begin{picture}(12,7)
\put(5.5,0){\circle*{0.2}}
\put(7.5,0){\circle*{0.2}}
\put(3.5,0){\circle*{0.2}}
\put(4.5,2){\circle*{0.2}}
\put(6.5,2){\circle*{0.2}}
\put(5.5,4){\circle*{0.2}}
\put(4,6){\circle*{0.2}}
\put(5.8,0){\makebox(0,0)[lb]{{\bf 5}}}
\put(5.3,1){\makebox(0,0)[lb]{2}}
\put(4.2,0.7){\makebox(0,0)[lb]{1}}
\put(7.3,1){\makebox(0,0)[lb]{1}}
\put(4.3,3){\makebox(0,0)[lb]{3}}
\put(6.3,3){\makebox(0,0)[lb]{2}}
\put(5.0,5){\makebox(0,0)[lb]{1}}
\put(3.8,0){\makebox(0,0)[lb]{{\bf 3}}}
\put(4.7,2){\makebox(0,0)[lb]{{\bf 2}}}
\put(7.8,0){\makebox(0,0)[lb]{{\bf 6}}}
\put(6.8,2){\makebox(0,0)[lb]{{\bf 4}}}
\put(5.8,4){\makebox(0,0)[lb]{{\bf 1}}}
\put(4.4,6){\makebox(0,0)[lb]{{\bf 0}}}
\put(5.5,0){\line(1,2){1}}
\put(7.5,0){\line(-1,2){1}}
\put(3.5,0){\line(1,2){1}}
\put(4.5,2){\line(1,2){1}}
\put(6.5,2){\line(-1,2){1}}
\put(5.5,4){\line(-3,4){1.5}}
\end{picture}
\end{center}
\vspace{.05in}

\begin{center}
{\sc Figure 1} 
\end{center}

}
{\rm The 
$6 \times 6$ matrix of tree 
structure is then given by }
\[
A = \left[ \begin{array}{rrrrrr}
1 & 1 & 1 & 1 & 1 & 1\\
1 & 4 & 4 & 1 & 1 & 1 \\
1 & 4 & 5 & 1 & 1 & 1 \\
1 & 1 & 1 & 3 & 3 & 3 \\
1 & 1 & 1 & 3 & 5 & 3 \\
1 & 1 & 1 & 3 & 3 & 4
\end{array}\right] .
\]
{\rm Here $A$ is an ultrametric matrix, all  the weights are positive. The inverse is given by}

\[
A^{-1} = \frac{1}{6} \left[ \begin{array}{rrrrrr}
11 & -2 & 0 & -3 & 0 & 0\\
-2 & 8 & -6 & 0 & 0 & 0 \\
0 & -6 & 6 & 0 & 0 & 0 \\
-3 & 0 & 0 & 6 & -3 & -6 \\
0 & 0 & 0 & -3 & 3 & 0\\
0 & 0 & 0 & -6 & 0 & 6
\end{array}\right] .
\]
{\rm Thus, clearly as shown in Theorem \ref{invM}, matrix $A$ is an inverse acyclic $M$-matrix. The graph of $A^{-1}$ is just the graph as in Figure 1.}
\end{example}

\begin{example} 
{\rm Next  consider   again the tree as given in Figure 1 but now with different weights.}

{\rm 
\noindent
\unitlength0.8cm
\begin{center}
\begin{picture}(12,6.5)
\put(5.5,0){\circle*{0.2}}
\put(7.5,0){\circle*{0.2}}
\put(3.5,0){\circle*{0.2}}
\put(4.5,2){\circle*{0.2}}
\put(6.5,2){\circle*{0.2}}
\put(5.5,4){\circle*{0.2}}
\put(4,6){\circle*{0.2}}
\put(5.8,0){\makebox(0,0)[lb]{{\bf 5}}}
\put(5.3,1){\makebox(0,0)[lb]{2}}
\put(4.2,0.7){\makebox(0,0)[lb]{2}}
\put(7.3,1){\makebox(0,0)[lb]{1}}
\put(4.3,3){\makebox(0,0)[lb]{2}}
\put(6.3,3){\makebox(0,0)[lb]{3}}
\put(5.0,5){\makebox(0,0)[lb]{-6}}
\put(3.8,0){\makebox(0,0)[lb]{{\bf 3}}}
\put(4.7,2){\makebox(0,0)[lb]{{\bf 2}}}
\put(7.8,0){\makebox(0,0)[lb]{{\bf 6}}}
\put(6.8,2){\makebox(0,0)[lb]{{\bf 4}}}
\put(5.8,4){\makebox(0,0)[lb]{{\bf 1}}}
\put(4.4,6){\makebox(0,0)[lb]{{\bf 0}}}
\put(5.5,0){\line(1,2){1}}
\put(7.5,0){\line(-1,2){1}}
\put(3.5,0){\line(1,2){1}}
\put(4.5,2){\line(1,2){1}}
\put(6.5,2){\line(-1,2){1}}
\put(5.5,4){\line(-3,4){1.5}}
\end{picture}
\end{center}

\vspace{.05in}

\begin{center}
{\sc Figure 2} 
\end{center}
}
{\rm  With the weights  as in Theorem \ref{invN0} we have}
\[
A = \left[ \begin{array}{rrrrrr}
-6 & -6 & -6 & -6 & -6 & -6\\
-6 & -4 & -4 & -6 & -6 & -6 \\
-6 & -4 & -2 & -6 & - 6& -6 \\
-6 & -6 & -6 & -3 & -3 & -3 \\
-6 & -6 & -6 & -3 & -1 & -3 \\
-6 & -6 & -6 & -3 & -3 & -2
\end{array}\right] .
\]

{\rm The inverse is then}
\[
A^{-1} = \frac{1}{6} \left[ \begin{array}{rrrrrr}
4 & -3 & 0 & -2 & 0 & 0\\
-3 & 6 & -3 & 0 & 0 & 0 \\
0 & -3 & 3 & 0 & 0 & 0 \\
-2 & 0 & 0 & 11 & -3 & -6 \\
0 & 0 & 0 & -3 & 3 & 0\\
0 & 0 & 0 & -6 & 0 & 6
\end{array}\right] .
\]
{\rm Thus, with Theorem \ref{invN0} matrix $A$ is an inverse acyclic $N_0$-matrix. The graph of $A^{-1}$ is just the graph as in Figure 2.}
\end{example}

\begin{example} 
{\rm Next  we illustrate Theorem \ref{invN0}. Consider the trees as given in Figure 3.}

{\rm 
\noindent
\unitlength0.8cm
\begin{picture}(18,7)
\put(7.5,0){\circle*{0.2}}
\put(4.5,2){\circle*{0.2}}
\put(6.5,2){\circle*{0.2}}
\put(5.5,4){\circle*{0.2}}
\put(4,6){\circle*{0.2}}


\put(7.3,1){\makebox(0,0)[lb]{2}}
\put(4.3,3){\makebox(0,0)[lb]{4}}
\put(6.3,3){\makebox(0,0)[lb]{1}}
\put(5.0,5){\makebox(0,0)[lb]{-5}}
\put(4.7,2){\makebox(0,0)[lb]{{\bf 2}}}
\put(7.8,0){\makebox(0,0)[lb]{{\bf 4}}}
\put(6.8,2){\makebox(0,0)[lb]{{\bf 3}}}
\put(5.8,4.2){\makebox(0,0)[lb]{{\bf 1}}}
\put(4.4,6){\makebox(0,0)[lb]{{\bf 0}}}
\put(7.5,0){\line(-1,2){1}}
\put(4.5,2){\line(1,2){1}}
\put(6.5,2){\line(-1,2){1}}
\put(5.5,4){\line(-3,4){1.5}}


\put(14.5,0){\circle*{0.2}}
\put(10.5,0){\circle*{0.2}}
\put(11.5,2){\circle*{0.2}}
\put(13.5,2){\circle*{0.2}}
\put(12.5,4){\circle*{0.2}}
\put(11,6){\circle*{0.2}}



\put(11.2,0.7){\makebox(0,0)[lb]{1}}
\put(14.3,1){\makebox(0,0)[lb]{2}}
\put(11.3,3){\makebox(0,0)[lb]{4}}
\put(13.3,3){\makebox(0,0)[lb]{1}}
\put(12.0,5){\makebox(0,0)[lb]{2}}
\put(10.8,0){\makebox(0,0)[lb]{{\bf 3}}}
\put(11.7,2){\makebox(0,0)[lb]{{\bf 2}}}
\put(14.8,0){\makebox(0,0)[lb]{{\bf 5}}}
\put(13.8,2){\makebox(0,0)[lb]{{\bf 4}}}
\put(12.8,4.2){\makebox(0,0)[lb]{{\bf 1}}}
\put(11.4,6){\makebox(0,0)[lb]{{\bf 0}}}


\put(14.5,0){\line(-1,2){1}}

\put(10.5,0){\line(1,2){1}}
\put(11.5,2){\line(1,2){1}}
\put(13.5,2){\line(-1,2){1}}
\put(12.5,4){\line(-3,4){1.5}}
\end{picture}
\vspace{.05in}

\begin{center}
{\sc Figure 3} 
\end{center}
}
{\rm On the left in Figure 3 we obtain a $4 \times 4$ inverse $N_0$-matrix. On the right we get a $5 \times 5$ inverse $M$-matrix. Combining these trees we get 
the tree given in Figure 4.}

{\rm 
\noindent
\unitlength0.8cm
\begin{picture}(18,7)
\put(7.5,0){\circle*{0.2}}
\put(4.5,2){\circle*{0.2}}
\put(6.5,2){\circle*{0.2}}
\put(5.5,4){\circle*{0.2}}
\put(4,6){\circle*{0.2}}

\put(5.5,4){\line(1,0){7}}

\put(7.3,1){\makebox(0,0)[lb]{2}}
\put(4.3,3){\makebox(0,0)[lb]{4}}
\put(6.3,3){\makebox(0,0)[lb]{1}}
\put(5.0,5){\makebox(0,0)[lb]{-5}}
\put(4.7,2){\makebox(0,0)[lb]{{\bf 2}}}
\put(7.8,0){\makebox(0,0)[lb]{{\bf 4}}}
\put(6.8,2){\makebox(0,0)[lb]{{\bf 3}}}
\put(5.8,4.2){\makebox(0,0)[lb]{{\bf 1}}}
\put(4.4,6){\makebox(0,0)[lb]{{\bf 0}}}
\put(7.5,0){\line(-1,2){1}}
\put(4.5,2){\line(1,2){1}}
\put(6.5,2){\line(-1,2){1}}
\put(5.5,4){\line(-3,4){1.5}}


\put(14.5,0){\circle*{0.2}}
\put(10.5,0){\circle*{0.2}}
\put(11.5,2){\circle*{0.2}}
\put(13.5,2){\circle*{0.2}}
\put(12.5,4){\circle*{0.2}}




\put(11.2,0.7){\makebox(0,0)[lb]{1}}
\put(14.3,1){\makebox(0,0)[lb]{2}}
\put(11.3,3){\makebox(0,0)[lb]{4}}
\put(13.3,3){\makebox(0,0)[lb]{1}}
\put(10.8,0){\makebox(0,0)[lb]{{\bf 7}}}

\put(11.7,2){\makebox(0,0)[lb]{{\bf 6}}}

\put(14.8,0){\makebox(0,0)[lb]{{\bf 9}}}
\put(13.8,2){\makebox(0,0)[lb]{{\bf 8}}}
\put(12.8,4.2){\makebox(0,0)[lb]{{\bf 5}}}
\put(8.0,4.2){\makebox(0,0)[lb]{{2-(-5)= 7 }}}


\put(14.5,0){\line(-1,2){1}}

\put(10.5,0){\line(1,2){1}}
\put(11.5,2){\line(1,2){1}}
\put(13.5,2){\line(-1,2){1}}
\end{picture}
\vspace{.05in}

\begin{center}
{\sc Figure 4} 
\end{center}
}

{\rm  We then obtain a matrix which is of tree structure}
\[
A = \left[ \begin{array}{rrrrrrrrr}
-5 & -5 & -5 & -5      & -5 & -5 & -5 & -5 & -5\\
-5 & -1 & -5 & -5      & -5 & -5 & -5 & -5 & -5\\
-5 & -5 & -4 & -4      & -5 & -5 & -5 & -5 & -5\\     
-5 & -5 & -4 & -2      & -5 & -5 & -5 & -5 & -5\\ 
-5 & -5 & -5 & -5 &   2 & 2 & 2 & 2 & 2 \\
-5 & -5 & -5 & -5 &   2 & 6 & 6 & 2 & 2 \\
-5 & -5 & -5 & -5 &   2 & 6 & 7 & 2 & 2 \\
-5 & -5 & -5 & -5 &   2 & 2 & 2 & 3 & 3 \\
-5 & -5 & -5 & -5 &   2 & 2 & 2 & 3 & 5 
\end{array}\right] .
\]
{\rm The inverse is given by}
\[
A^{-1} = \frac{1}{140}\left[ \begin{array}{rrrrrrrrr}
167 & -35 & -140 & 0 & -20      & 0 & 0 & 0 & 0\\
-35 & 35 & 0 & 0      & 0 & 0 & 0 & 0 & 0\\
-140 & 0 & 210 & -70 & 0     & 0 & 0 & 0 & 0\\ 
0 & 0 & -70 & 70     & 0 & 0 & 0 & 0 & 0\\
-20 & 0 & 0 & 0 &   195 & -35 & 0 & -140 & 0 \\
0 & 0 & 0 & 0 &   -35 & 175 & -140 & 0 & 0 \\
0 & 0 & 0 & 0 &   0 & -140 & 140 & 0 & 0 \\
0 & 0 & 0 & 0 &   -140 & 0 & 0 & 210 & -70 \\
0 & 0 & 0 & 0 &   0 & 0 & 0 & -70 & 70 
\end{array}\right] .
\]
{\rm Here we then have $A^{-1} \in L_3$ and its graph is the same as in Figure 4.}

\end{example}
The next example gives a shifted generalized cyclopes.
\begin{example}
 {\rm Let $A$ be given by}
\[
A= \left[ \begin{array}{cccccc}
-1 & -2 & -3 & -1 &-1 & -1 \\ 
-2  & -2 & -3 & -1 & -1 & -1 \\ 
-3  & -3 & -3 & -1 & -1 & -1 \\
-5 & -5 & -5 & 1 & 1 &  1 \\
-5 & -5 & -5 & 1 & 3 & 3 \\
-5 & -5 & -5 & 1 & 3 & 5
\end{array}\right] .
\]
{\rm $A$ is a shifted generalized cyclopes with eye 3. The inverse is given by }
\[
A^{-1} = \frac{1}{8}\left[ \begin{array}{cccccc}
8 & -8 & 0 & 0 & 0 & 0  \\ 
-8 & 16 & -8 & 0 & 0 & 0  \\
0 & -8 & 7 & -1 & 0 & 0 \\ 
 0 & 0 & -5 & 7 & -4 & 0 \\
 0  &  0 &  0 & -4 & 8 & -4 \\
  0&  0 &  0&  0 & -4 & 4
  \end{array}\right] .
\]
  
\end{example}

\bibliographystyle{abbrv}
\bibliography{acyclic.bib}

\end{document}